\theoremstyle{plain} %text of this environment is typesetted in italics
\newtheorem{theorem}{\indent\sc Theorem}[section]
\newtheorem{lemma}[theorem]{\indent\sc Lemma}
\newtheorem{corollary}[theorem]{\indent\sc Corollary}
\theoremstyle{definition} %text of this environment is typesetted in roman letters
\newtheorem{definition}[theorem]{\indent\sc Definition}
\newtheorem{remark}[theorem]{\indent\sc Remark}
\newtheorem{example}[theorem]{\indent\sc Example}
\def\C{{\mathbf{C}}}%   \C == \mathbf{C}
\def\R{{\mathbf{R}}}%   \R == \mathbf{R}
\def\H{{\mathbf{H}}}%   \H == \mathbf{H}
\def\N{{\mathbf{N}}}%   \N == \mathbf{N}
\def\Pi{{\mathbf{P}}}%  \Pi == \mathbf{P}
\def\Si{{\mathbf{S}}}%   \Si == \mathbf{S}
\def\Lc{{\mathcal{L}}}%   \Lc == \mathcal{L}
\def\tr#1{\mathord{\mathopen{{\vphantom{#1}}^t}#1}}
\begin{document}

\title[Function-theoretic properties of Gauss maps]{Function-theoretic properties for the Gauss maps of various classes of surfaces} %title of paper and the running head option

\author[Y. Kawakami]{Yu Kawakami} %first author's name and the running head option

%\author[S. Author]{Second Author} %second author's name and the running head option

%\dedicatory{Dedicated to Professor Xxx Yyy on his sixtieth birthday}

%%%%%%%%%%%%%%% footnote %%%%%%%%%%%%%%%%
%\subjclass[2000]{ %2000 MSC numbers
%Primary 00; Secondary 00.
%}
%In case \subjclass[2000] command is not effective
%(or the version of amsart.cls is old), write as follows instead:
\renewcommand{\thefootnote}{\fnsymbol{footnote}}
\footnote[0]{2010\textit{ Mathematics Subject Classification}.
Primary 30D35, 53C42; Secondary 30F45, 53A10, 53A15.}
\keywords{ %key words and phrases
Gauss map, minimal surface, constant mean curvature surface, front, ramification, omitted value, the Ahlfors island theorem, unicity theorem.
}
\thanks{ %acknowledgment of support etc. if any
Partly supported by the Grant-in-Aid for Young Scientists (B) No. 24740044, Japan Society for the Promotion of Science.
}
%%%%%%%%%%%% Authors addresses %%%%%%%%%%%%%
\address{% First Author
Graduate School of Natural Science and Technology, \endgraf
Kanazawa university, \endgraf
Kanazawa, 920-1192, Japan
}
\email{y-kwkami@se.kanazawa-u.ac.jp}

%%%%%%%%%%%%%%%%%%%%%%%%%%%%%%%%%%%%%%%%%

\maketitle

\begin{abstract}
We elucidate the geometric background of function-theoretic properties for the Gauss maps of 
several classes of immersed surfaces in three-dimensional space forms, for example, minimal surfaces in Euclidean three-space, 
improper affine spheres in the affine three-space, and constant mean curvature one surfaces and flat surfaces in hyperbolic three-space. 
To achieve this purpose, we prove an optimal curvature bound for a specified conformal metric on an open Riemann surface and 
give some applications. We also provide unicity theorems for the Gauss maps of these classes of surfaces. 
\end{abstract}

\section{Introduction} 
One of the central issues in minimal surface theory is to understand the global behavior of the Gauss map. 
In the latter half of the twentieth century, Osserman \cite{Os1959, Os1964, Os1986} initiated a systematic study of the Gauss map and, 
in particular, proved that the image of the Gauss map of a nonflat complete minimal surface in Euclidean 3-space ${\R}^{3}$ must be dense 
in the unit 2-sphere ${\Si}^{2}$. 
Xavier \cite{Xa1981} then showed that the Gauss map can omit at most a finite number of values in ${\Si}^{2}$, and 
Fujimoto \cite{Fu1988} proved that the precise maximum for the number of omitted values possible is 4. Fujimoto also 
gave a curvature bound for a minimal surface when all of the multiple values of the Gauss map are totally ramified (\cite{Fu1992, Fu1993}). 
Here a value $\alpha$ of a map or function $g$ is said to be {\it totally ramified} if the equation $g=\alpha$ has no simple roots. 
Moreover, Fujimoto obtained a unicity theorem for the Gauss maps of nonflat complete minimal surfaces, 
which is analogous to the Nevanlinna unicity theorem (\cite{Ne1926}) 
for meromorphic functions on the complex plane $\C$ (\cite{Fu1993-2}). 

There exist several classes of immersed surfaces whose Gauss maps have these function-theoretical properties. 
For instance, Yu \cite{Yu1997} showed that the hyperbolic Gauss map of a nonflat complete constant mean curvature one surface 
in hyperbolic 3-space ${\H}^{3}$ can omit at most 4 values. The author and Nakajo \cite{KN2012} obtained that the maximal number 
of omitted values of the Lagrangian Gauss map of a weakly complete improper affine front in the affine 3-space ${\R}^{3}$ is 3, unless 
it is an elliptic paraboloid. As an application of this result, a simple proof of the parametric affine Bernstein theorem for an improper 
affine sphere in ${\R}^{3}$ was provided. Moreover the author \cite{Ka2013-2} gave similar results for flat fronts in ${\H}^{3}$. 

In \cite{Ka2013}, we revealed a geometric meaning for the maximal number of omitted values of their Gauss maps. To be precise, 
we gave a curvature bound for the conformal metric $ds^{2}=(1+|g|^{2})^{m}|\omega|^{2}$ on an open Riemann surface $\Sigma$, 
where $\omega$ is a holomorphic 1-form and $g$ is a meromorphic function on $\Sigma$ (\cite[Theorem 2.1]{Ka2013}) and, as a corollary of 
the theorem, proved that the precise maximal number of omitted values of the nonconstant meromorphic function $g$ on $\Sigma$ with the 
complete conformal metric $ds^{2}$ is $m+2$ (\cite[Corollary 2.2, Proposition 2.4]{Ka2013}). Since the induced metric from ${\R}^{3}$ of a 
complete minimal surface is $ds^{2}=(1+|g|^{2})^{2}|\omega|^{2}$ (i.e. $m=2$), the maximum number of omitted values of the Gauss map $g$ of 
a nonflat complete minimal surface in ${\R}^{3}$ is $4\,(=2+2)$. On the other hand, for the Lagrangian Gauss map $\nu$ of a weakly complete 
improper affine front, because $\nu$ is meromorphic, $dG$ is holomorphic and the complete metric is $d{\tau}^{2}=(1+|\nu|^{2})|dG|^{2}$ (i.e. $m=1$), 
the maximal number of omitted values of the Lagrangian Gauss map of a weakly complete improper affine front in ${\R}^{3}$ is 
$3\,(=1+2)$, unless it is an elliptic paraboloid. 

The goal of this paper is to elucidate the geometric background of function-theoretic properties for the Gauss maps. 
The paper is organized as follows: In Section 2, we first give a curvature bound for the conformal metric $ds^{2}=(1+|g|^{2})^{m}|\omega|^{2}$ 
on an open Riemann surface $\Sigma$ when all of the multiple values of the meromorphic function $g$ are 
totally ramified (Theorem \ref{thm-ramification}). 
This is a generalization of Theorem 2.1 in \cite{Ka2013}, and the proof is given in Section 3.1. 
As a corollary of this theorem, we give a ramification theorem for the meromorphic function $g$ on $\Sigma$ 
with the complete conformal metric $ds^{2}$ (Corollary \ref{cor-ramification}). We remark that this corresponds to the defect relation 
in Nevanlinna theory (see \cite{Ko2003}, \cite{NO1990}, \cite{NW2013}, \cite{Ru2001} for the details). 
Next, we provide two applications of the result. The first one is to show that the precise maximal number of 
omitted values of the nonconstant meromorphic function $g$ on $\Sigma$ with complete conformal 
metric $ds^{2}$ is $m+2$ (Corollary \ref{cor-exceptional}). The second one is to prove an analogue of 
a special case of the Ahlfors islands theorem \cite[Theorem B.2]{Be2000} for $g$ on $\Sigma$ 
with the complete conformal metric $ds^{2}$ (Corollaries \ref{cor-covering} and \ref{cor-Ahlfors-island}). 
The Ahlfors islands theorem has found various applications in complex dynamics; see \cite{Be2000} for an exposition. 
We also give a unicity theorem for the nonconstant meromorphic function $g$ on an open Riemann surface $\Sigma$ with the complete conformal metric $ds^{2}$ 
(Theorem \ref{thm-unicity}). This theorem is optimal in that for every even number $m$, there exist examples (Example \ref{exa-unicity}). 
The proof is given in Section 3.2. When $m=0$, all results coincide with the results 
for meromorphic functions on $\C$ (Remarks \ref{rem-geometry}, \ref{rem-Ahlfors-island} and \ref{rem-unicity-thm}). 
In Section 4, as applications of the main results, we show some function-theoretic properties for the Gauss maps of the following classes 
of surfaces: minimal surfaces in ${\R}^{3}$ (Section 4.1), constant mean curvature one surfaces in ${\H}^{3}$ (Section 4.2), 
maxfaces in ${\R}^{3}_{1}$ (Section 4.3), improper affine fronts in ${\R}^{3}$ (Section 4.4) and flat fronts in ${\H}^{3}$ (Section 4.5). 
In particular, we give their geometric background. 

Finally, the author would like to thank Professors Junjiro Noguchi, Wayne Rossman, Masaaki Umehara and Kotaro Yamada for their useful advice. 
The author also would like to express his thanks to Professors Ryoichi Kobayashi, Masatoshi Kokubu, Miyuki Koiso and Reiko Miyaoka 
for their constant encouragement. 

%%%%%%%%%%%%%

\section{Main results}
\subsection{Curvature bound and its corollaries} 
We first give the following curvature bound for the conformal metric $ds^{2}=(1+|g|^{2})^{m}|\omega|^{2}$ on an 
open Riemann surface $\Sigma$. This is more precise than Theorem 2.1 in \cite{Ka2013}. 
% curvature estimate
\begin{theorem}\label{thm-ramification}
Let $\Sigma$ be an open Riemann surface with the conformal metric 
\begin{equation}\label{equ-metric}
ds^{2}=(1+|g|^{2})^{m}|\omega|^{2}, 
\end{equation}
where $\omega$ is a holomorphic $1$-form, $g$ is a meromorphic function on $\Sigma$, and $m\in \N$. 
Let $q\in \N$, ${\alpha}_{1}, \ldots, {\alpha}_{q}\in \C\cup \{\infty \}$ be distinct and ${\nu}_{1}, \ldots, {\nu}_{q}\in \N\cup \{\infty \}$. 
Suppose that
\begin{equation}\label{equ-ramification}
\gamma= \displaystyle \sum_{j=1}^{q} \biggl{(}1-\dfrac{1}{{\nu}_{j}} \biggr{)}> m+2. 
\end{equation}
If $g$ satisfies the property that all ${\alpha}_{j}$-points of $g$ have multiplicity at least ${\nu}_{j}$, 
then there exists a positive constant $C$, depending on $m$, $\gamma$ and ${\alpha}_{1}, \ldots, {\alpha}_{q}$, but not the surface, 
such that for all $p\in \Sigma$ we have
\begin{equation}\label{equ-curvature}
|K_{ds^{2}}(p)|^{1/2}\leq \dfrac{C}{d(p)}, 
\end{equation}
where $K_{ds^{2}}(p)$ is the Gaussian curvature of the metric $ds^{2}$ at $p$ and 
$d(p)$ is the geodesic distance from $p$ to the boundary of $\Sigma$, that is, 
the infimum of the lengths of the divergent curves in $\Sigma$ emanating from $p$. 
\end{theorem}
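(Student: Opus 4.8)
The natural strategy is to follow Fujimoto's method for the Gauss map of minimal surfaces (\cite{Fu1992}, \cite{Fu1993}), in the form the author used in \cite{Ka2013} to establish Theorem 2.1 there, which is the special case $\nu_1=\dots=\nu_q=\infty$; the point to keep in mind throughout is that the finite ramification orders $\nu_j$ must be carried through every estimate. One begins with easy reductions. If $g$ is constant the metric $ds^2$ is flat, so $K_{ds^2}\equiv 0$ and \eqref{equ-curvature} is trivial; hence we assume $g$ nonconstant. An index $j$ with $\nu_j=1$ contributes nothing to $\gamma$ and imposes no condition, so we discard such $j$ and assume every $\nu_j\ge 2$; in particular $q\ge 3$ since $\gamma>m+2\ge 2$. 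In a local coordinate $z$ with $\omega=f\,dz$ one has the identity $|K_{ds^2}|=2m\,|g'|^2\bigl(1+|g|^2\bigr)^{-(m+2)}|f|^{-2}$, which will be used repeatedly; note also that $K_{ds^2}\le 0$ everywhere.

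The first real step is to pass to the universal covering $\pi\colon\widetilde\Sigma\to\Sigma$; since $\Sigma$ is an open Riemann surface, $\widetilde\Sigma$ is conformally $\C$ or the unit disc $\D$, and a divergent-curve argument using the covering structure shows that the distance from a lift $\tilde p$ of $p$ to the ideal boundary of $(\widetilde\Sigma,\pi^*ds^2)$ equals $d(p)$; moreover $K_{\pi^*ds^2}(\tilde p)=K_{ds^2}(p)$, and $g\circ\pi$ still has all $\alpha_j$-points of multiplicity $\ge\nu_j$. If $\widetilde\Sigma=\C$ then $g\circ\pi$ would be a nonconstant meromorphic function on $\C$ with $\sum_{j}(1-1/\nu_j)=\gamma>2$, which is impossible by the ramification form of Nevanlinna's second main theorem (see, e.g., \cite{NO1990}). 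Hence $\widetilde\Sigma=\D$, and it suffices to prove the inequality on $\D$.

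The heart of the matter is then the construction, out of $g$ (now a nonconstant meromorphic function on $\D$), of an auxiliary conformal pseudo-metric $d\tau^2=\lambda^2\,ds^2$ whose density is built from $|g'|$, the chordal distances $\|g,\alpha_j\|$ (with the usual separate expression when $\alpha_j=\infty$) and a power of $1+|g|^2$, with exponents governed by a parameter $t$ which — and this is exactly where the hypothesis enters — can be chosen in the interval $\bigl(\tfrac{m+2}{\gamma},1\bigr)$ precisely because $\gamma>m+2$. The exponents are arranged so that: a $\partial\bar\partial$-computation gives $K_{d\tau^2}\le -1$ away from a discrete set (the $1+|g|^2$-factor provides a strictly negative term, while $\Delta\log|g'|$ and $-\Delta\log\|g,\alpha_j\|$ contribute only nonnegative distributional terms); and at an $\alpha_j$-point of $g$ of multiplicity $\mu$ the exponent of $|z-a|$ in $\lambda$ works out to a multiple of $\mu/\nu_j-1\ge 0$ — nonnegative precisely because the standing hypothesis forces $\mu\ge\nu_j$ — so that $d\tau^2$ extends across all of $\D$ as a (possibly degenerate) continuous metric and, more importantly, dominates a fixed positive multiple of $|K_{ds^2}|\,ds^2$, the constant depending only on $m$, $\gamma$ and the $\alpha_j$. (The poles of $g$ and the zeros of $\omega$ are handled separately by the same bookkeeping.) Applying the generalized Ahlfors--Schwarz lemma to $d\tau^2$ on $\D$ bounds it above by the Poincar\'e metric; chasing this bound back through the definition of $\lambda$, through the curvature identity, and along a divergent curve from $\tilde p$ realizing $d(p)$, one obtains $|K_{ds^2}(p)|^{1/2}d(p)\le C$, with $C$ built only out of the admissible exponent $t$, the comparison constant above, and the Ahlfors--Schwarz constant — hence independent of $\Sigma$.

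I expect the genuine obstacles to be, first, the verification that the auxiliary metric is well behaved at \emph{every} exceptional point (the fibres $g^{-1}(\alpha_j)$, the poles of $g$, the zeros of $\omega$) and is complete in exactly the directions the argument needs — this is precisely the place where the case $\nu_j<\infty$ is more delicate than the omitted-value case of \cite{Ka2013}, in which there are no $\alpha_j$-points to examine; and second, the final comparison step, where one must convert the Ahlfors--Schwarz estimate (naturally phrased in terms of a conformal quantity) into a bound for the \emph{intrinsic} geodesic distance $d(p)$ while retaining a constant that does not depend on the particular surface.
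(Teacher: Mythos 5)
Your outline has the right Fujimoto-style ingredients (auxiliary metrics with exponents tuned by $\gamma>m+2$, the ramification hypothesis entering through the exponents at the ${\alpha}_{j}$-points), and the reduction to the unit disc via the universal cover is legitimate, but the decisive step --- the only place where the intrinsic distance $d(p)$ can enter --- does not work as you describe it. From $K_{d\tau^{2}}\le -1$ the Ahlfors--Schwarz lemma gives a pointwise bound of $d\tau^{2}$ by the Poincar\'e metric $ds_{P}^{2}$ of the disc, and together with $d\tau^{2}\ge c\,|K_{ds^{2}}|\,ds^{2}$ this yields only the pointwise inequality $|K_{ds^{2}}|\,ds^{2}\le C_{1}\,ds_{P}^{2}$. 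Evaluating this at the lift of $p$ bounds $|K_{ds^{2}}(p)|^{1/2}$ times the conformal \emph{density} of $ds$ at that point, not times $d(p)$; and integrating it along a divergent curve is vacuous because the Poincar\'e metric is complete, so the right-hand side has infinite length (also note the infimum defining $d(p)$ need not be realized by any curve). In particular your scheme could not even recover Corollary \ref{cor-ramification}: a complete pulled-back metric is perfectly compatible with a pointwise bound by $ds_{P}^{2}$, since $ds_{P}^{2}$ itself is complete. What is missing is the mechanism the paper uses to convert conformal estimates into a bound on $d(p)$: one introduces the \emph{flat} auxiliary metric $d\sigma^{2}$ of (\ref{equ-proof-213}) on the complement ${\Sigma}'$ of the exceptional set, applies Lemma \ref{main-lem2} to get a maximal isometric disc $\Phi\colon {\triangle}_{R}\to {\Sigma}'$ with a divergent radial image ${\Gamma}_{a_{0}}$, and uses the ramification-weighted estimate of Lemma \ref{main-lem1} twice: once to get $R<+\infty$ (inequality (\ref{equ-proof-214})), and once to get ${\Phi}^{\ast}ds\le (C')^{\lambda}\bigl(R/(R^{2}-|z|^{2})\bigr)^{\lambda}|dz|$ with $\lambda<1$, which is integrable along the radius and hence gives $d(p)\le (C')^{\lambda}R^{1-\lambda}/(1-\lambda)$; the exponent analysis at the exceptional points (this is where $l\ge {\nu}_{j}$ is used) shows ${\Gamma}_{a_{0}}$ is divergent in $\Sigma$ itself, so this really bounds $d(p)$, and combining with (\ref{equ-proof-214}) and the curvature identity makes all $g$-dependent factors cancel. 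An Ahlfors--Schwarz-type comparison occurs only inside the proof of Lemma \ref{main-lem1}; it cannot by itself replace the flat-metric/maximal-disc argument.

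A second, lesser gap concerns the constant. The theorem asserts that $C$ depends only on $m$, $\gamma$ and ${\alpha}_{1},\ldots,{\alpha}_{q}$, not on the individual ${\nu}_{j}$, whereas the constant produced by your construction (like the one produced by the main body of the paper's argument) depends on the exponents $1-1/{\nu}_{j}$. The paper removes this dependence by first normalizing via the conditions (A) and (B) and then proving Lemma \ref{lem-finite}, which shows that only finitely many tuples $({\nu}_{1},\ldots,{\nu}_{q})$ can occur after the normalization, so one may take the maximum of finitely many constants. Your proposal omits this reduction entirely, so even after repairing the main step it would prove a weaker statement in which $C$ is allowed to depend on the ramification orders.
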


% ramification theorem 
As a corollary of Theorem \ref{thm-ramification}, we give the following ramification theorem for the meromorphic function $g$ on $\Sigma$ 
with the complete conformal metric $ds^{2}=(1+|g|^{2})^{m}|\omega|^{2}$. 

\begin{corollary}\label{cor-ramification} 
Let $\Sigma$ be an open Riemann surface with the conformal metric given by (\ref{equ-metric}). 
Let $q\in \N$, ${\alpha}_{1}, \ldots, {\alpha}_{q}\in \C\cup \{\infty \}$ be distinct and ${\nu}_{1}, \ldots, {\nu}_{q}\in \N\cup \{\infty \}$. 
Suppose that the metric $ds^{2}$ is complete and that the inequality (\ref{equ-ramification}) holds. 
If $g$ satisfies the property that all ${\alpha}_{j}$-points of $g$ have multiplicity at least ${\nu}_{j}$, 
then $g$ must be constant. 
\end{corollary}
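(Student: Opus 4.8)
The plan is to argue by contradiction: suppose the metric $ds^{2}=(1+|g|^{2})^{m}|\omega|^{2}$ is complete, that the ramification hypothesis holds, that $\gamma>m+2$, but that $g$ is \emph{nonconstant}. The key observation is that the completeness of $ds^{2}$ forces the geodesic distance $d(p)$ from any point $p\in\Sigma$ to the ideal boundary to be infinite; indeed, a divergent curve in $\Sigma$ has infinite $ds^{2}$-length precisely because the metric is complete, so $d(p)=+\infty$ for every $p$. Feeding this into the curvature estimate \eqref{equ-curvature} of Theorem \ref{thm-ramification}, whose hypotheses are exactly what we have assumed, gives $|K_{ds^{2}}(p)|^{1/2}\le C/d(p)=0$, hence $K_{ds^{2}}\equiv 0$ on $\Sigma$.

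It then remains to rule out the flat case. Since $ds^{2}=(1+|g|^{2})^{m}|\omega|^{2}$ is conformal with conformal factor $\lambda^{2}=(1+|g|^{2})^{m}|h|^{2}$ in a local coordinate $z$ where $\omega=h\,dz$, the Gaussian curvature is $K_{ds^{2}}=-\Delta\log\lambda/\lambda^{2}$, and away from the zeros of $\omega$ and the poles of $g$ one computes
\begin{equation*}
K_{ds^{2}}=-\frac{2m}{\lambda^{2}}\,\frac{|g'|^{2}}{(1+|g|^{2})^{2}}
=-\frac{2m\,|g'|^{2}}{(1+|g|^{2})^{m+2}|h|^{2}}.
\end{equation*}
Thus $K_{ds^{2}}\equiv 0$ together with $m\ge 1$ forces $g'\equiv 0$ on the dense open set where the expression is valid, so $g$ is constant there and hence constant on $\Sigma$ by the identity theorem — contradicting our assumption. (If $m=0$ the statement is the classical one: a flat complete conformal metric $|\omega|^{2}$ on an open Riemann surface together with the ramification condition for a meromorphic function still forces $g$ constant, but in the present corollary $m\in\N$ so we need not treat $m=0$ separately unless the paper's convention includes $0$; in that case one invokes the $m=0$ case of Theorem \ref{thm-ramification} directly, where the same completeness argument gives $K\equiv 0$ and a separate classical argument finishes.)

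The only real content is Theorem \ref{thm-ramification}, which we are entitled to assume; once that estimate is in hand, the corollary is a short deduction. The one point that deserves care is the passage "complete $\Rightarrow$ $d(p)=\infty$": here I would note that by definition $d(p)$ is the infimum of $ds^{2}$-lengths of divergent curves from $p$, and completeness of $ds^{2}$ means exactly that every divergent curve has infinite length, so the infimum over a nonempty family of $+\infty$'s is $+\infty$ (the family is nonempty because $\Sigma$ is open, i.e.\ noncompact, so divergent curves exist). There is no genuine obstacle; the main subtlety is purely bookkeeping about which hypotheses of Theorem \ref{thm-ramification} are being invoked and verifying that completeness is used only through $d(p)=\infty$.
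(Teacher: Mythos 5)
Your proof is correct and follows essentially the same route as the paper: completeness gives $d(p)=\infty$, Theorem \ref{thm-ramification} then forces $K_{ds^{2}}\equiv 0$, and the explicit formula $K_{ds^{2}}=-2m|g'_{z}|^{2}/\bigl((1+|g|^{2})^{m+2}|\hat{\omega}_{z}|^{2}\bigr)$ with $m\geq 1$ yields $g$ constant. The extra discussion of the $m=0$ case is unnecessary here since $m\in\N$ in the hypothesis, but it does no harm.
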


\begin{proof}
Since $ds^{2}$ is complete, we may set $d(p)=\infty$ for all $p\in \Sigma$. By virtue of Theorem \ref{thm-ramification}, 
$K_{ds^{2}}\equiv 0$ on $\Sigma$. On the other hand, the Gaussian curvature of the metric $ds^{2}$ is given by 
\begin{equation}\label{equ-Gaussian}
K_{ds^{2}}=-\dfrac{2m|g'_{z}|^{2}}{(1+|g|^{2})^{m+2}|\hat{\omega}_{z}|^{2}}, 
\end{equation}
where $\omega= \hat{\omega}_{z}dz$ and $g'_{z}=dg/dz$. Hence $K_{ds^{2}}\equiv 0$ if and only if $g$ is constant. 
\end{proof}

% geometric meaning
\begin{remark}\label{rem-geometry}
The geometric meaning of the ``2'' in ``$m+2$'' is the Euler number of the Riemann sphere. 
Indeed, if $m=0$ then the metric $ds^{2}=(1+|g|^{2})^{0}|\omega|^{2}=|\omega|^{2}$ is flat and complete on $\Sigma$. 
We thus may assume that $g$ is a meromorphic function on $\C$ because $g$ is replaced by $g\circ \pi$, 
where $\pi\colon \C\to \Sigma$ is a holomorphic universal covering map. On the other hand, Ahlfors \cite{Ah1935} and Chern \cite{Ch1960} 
showed that the least upper bound for the defect relation for meromorphic functions on $\C$ coincides with the Euler number of the Riemann 
sphere. Hence we get the conclusion. 
\end{remark}

% exceptional values 
We next give two applications of Corollary \ref{cor-ramification}. 
The first one is to provide the precise maximal number of omitted values of 
the meromorphic function $g$ on $\Sigma$ with the complete conformal metric $ds^{2}=(1+|g|^{2})^{m}|\omega|^{2}$. 

\begin{corollary}[{\cite[Corollary 2.2]{Ka2013}}]\label{cor-exceptional}
Let $\Sigma$ be an open Riemann surface with the conformal metric given by (\ref{equ-metric}). 
If the metric $ds^{2}$ is complete and the meromorphic function $g$ is nonconstant, then $g$ can omit at most $m+2$ distinct values. 
\end{corollary}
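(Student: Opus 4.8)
The plan is to obtain this as an immediate specialization of Corollary \ref{cor-ramification}, viewing an omitted value as the extreme instance of a totally ramified value, namely the case where the ramification order is infinite. So I would argue by contradiction: suppose the nonconstant meromorphic function $g$ omits $q$ distinct values ${\alpha}_{1}, \ldots, {\alpha}_{q} \in \C \cup \{\infty\}$, and suppose $q \geq m+3$.

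For each $j$, I would take ${\nu}_{j} = \infty$. Then the hypothesis of Corollary \ref{cor-ramification} that ``all ${\alpha}_{j}$-points of $g$ have multiplicity at least ${\nu}_{j}$'' is satisfied vacuously, since $g$ has no ${\alpha}_{j}$-points at all. With the convention $1 - 1/{\nu}_{j} = 1$ when ${\nu}_{j} = \infty$, the ramification sum becomes
\[
\gamma = \sum_{j=1}^{q} \left( 1 - \frac{1}{{\nu}_{j}} \right) = q \geq m+3 > m+2,
\]
so inequality (\ref{equ-ramification}) holds. Since $ds^{2}$ is assumed complete, Corollary \ref{cor-ramification} applies and forces $g$ to be constant, contradicting the hypothesis. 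Therefore $q \leq m+2$, which is the assertion.

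Since the whole argument is a one-line reduction, there is essentially no obstacle to overcome; the only point requiring attention is the conceptual one of recognizing that an omitted value is precisely the degenerate case ${\nu}_{j} = \infty$ of total ramification, so that the ramification theorem already contains the statement about omitted values. One could additionally remark that the bound $m+2$ is sharp — the existence of examples attaining it being treated elsewhere in the paper — and that for $m = 0$ this recovers the classical fact that a nonconstant meromorphic function on $\C$ omits at most two values, consistent with the interpretation of ``$2$'' as the Euler number of the Riemann sphere discussed in Remark \ref{rem-geometry}.
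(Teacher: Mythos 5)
Your argument is correct and is essentially identical to the paper's own proof: both treat each omitted value as a totally ramified value with ${\nu}_{j}=\infty$, note that $\gamma \geq m+3 > m+2$, and invoke Corollary \ref{cor-ramification} to force $g$ constant, contradicting the hypothesis. No gaps; the extra remarks on sharpness and the $m=0$ case match the paper's surrounding discussion.
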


\begin{proof}
By way of contradiction, assume that $g$ omits $m+3$ distinct values. In Corollary \ref{cor-ramification}, 
if $g$ does not take a value ${\alpha}_{j}$ $(j=1, \ldots, q)$, 
we may set ${\nu}_{j}=\infty$ in (\ref{equ-ramification}). Thus we can consider the case where $\gamma \geq m+3\, (>m+2)$. 
By virtue of Corollary \ref{cor-ramification}, the function $g$ is constant. This contradicts the assumption that $g$ is nonconstant. 
\end{proof}
The number ``$m+2$'' is sharp because there exist examples in \cite[Proposition 2.4]{Ka2013}. 

% 2nd application
The second one is to show an analogue of the Ahlfors islands theorem \cite[Theorem B.2]{Be2000} for 
the meromorphic function $g$ on $\Sigma$ with the complete conformal metric $ds^{2}=(1+|g|^{2})^{m}|\omega|^{2}$. 
We first recall the notion of chordal distance between two distinct values in the Riemann sphere $\C\cup \{\infty \}$. 
For two distinct values $\alpha$, $\beta\in \C\cup \{\infty\}$, we set 
$$
|\alpha, \beta|:= \dfrac{|\alpha -\beta|}{\sqrt{1+|\alpha|^{2}}\sqrt{1+|\beta|^{2}}}
$$
if $\alpha \not= \infty$ and $\beta \not= \infty$, and $|\alpha, \infty|=|\infty, \alpha| := 1/\sqrt{1+|\alpha|^{2}}$. 
We remark that, if we take $v_{1}$, $v_{2}\in {\Si}^{2}$ with $\alpha =\varpi (v_{1})$ and $\beta = \varpi (v_{2})$, we have that 
$|\alpha, \beta|$ is a half of the chordal distance between $v_{1}$ and $v_{2}$, where $\varpi$ denotes the stereographic projection of 
${\Si}^{2}$ onto $\C\cup \{\infty \}$. We next explain the definition of an island of a meromorphic function on a Riemann surface. 

\begin{definition}\label{def-island}
Let $\Sigma$ be a Riemann surface and $g\colon \Sigma\to \C\cup\{\infty\}$ a meromorphic function. 
Let $V\subset \C\cup\{\infty\}$ be a Jordan domain. A simply-connected component $U$ of $g^{-1}(V)$ with $\overline{U}\subset \Sigma$ is 
called an {\it island} of $g$ over $V$. Note that $g|_{U}\colon U\to V$ is a proper map. The degree of this map is called the {\it multiplicity} of 
the island $U$. An island of multiplicity one is called a {\it simple island}. 
\end{definition} 

When all islands of the meromorphic function $g$ with the complete conformal metric $ds^{2}=(1+|g|^{2})^{m}|\omega|^{2}$ are small disks, 
we get the following result by applying Corollary \ref{cor-ramification}. 

% islands theorem (1)
\begin{corollary}\label{cor-covering}
Let $\Sigma$ be an open Riemann surface with the conformal metric given by (\ref{equ-metric}). 
Let $q\in \N$, ${\alpha}_{1}, \ldots, {\alpha}_{q}\in \C\cup \{\infty \}$ be distinct, 
$D_{j}({\alpha}_{j}, \varepsilon):=\{z\in \C\cup \{\infty \} \,;\, |z, {\alpha}_{j}|< \varepsilon \}$ $(1\leq j\leq q)$ be pairwise disjoint and 
${\nu}_{1}, \ldots, {\nu}_{q}\in \N$. Suppose that the metric $ds^{2}$ is complete and that the inequality (\ref{equ-ramification}) holds. 
Then there exists $\varepsilon > 0$ such that, if $g$ has no island of multiplicity less than ${\nu}_{j}$ over $D_{j}({\alpha}_{j}, \varepsilon)$ 
for all $j\in \{1, \ldots , q\}$, then $g$ must be constant. 
\end{corollary}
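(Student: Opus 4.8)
The plan is to deduce Corollary \ref{cor-covering} from Corollary \ref{cor-ramification} by converting the hypothesis on islands into a hypothesis on the multiplicities of $\alpha_j$-points. The key observation is that for a suitable $\varepsilon>0$, if $g$ has no island of multiplicity less than $\nu_j$ over the small disk $D_j(\alpha_j,\varepsilon)$, then in fact every $\alpha_j$-point of $g$ has multiplicity at least $\nu_j$; once this is established, Corollary \ref{cor-ramification} applies verbatim (with the same $q$, $\alpha_j$, $\nu_j$, and the same inequality (\ref{equ-ramification})) and forces $g$ to be constant.

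First I would choose $\varepsilon>0$ small enough that the closed chordal disks $\overline{D_j(\alpha_j,\varepsilon)}$ are pairwise disjoint; this is possible since the $\alpha_j$ are distinct and finite in number, and it only shrinks the pairwise-disjoint $D_j(\alpha_j,\varepsilon)$ already given in the statement. Now suppose toward a contradiction that some $\alpha_j$-point $p_0\in\Sigma$ of $g$ has multiplicity $k<\nu_j$. Since $g$ is a nonconstant (this case is the only one that needs argument; if $g$ is constant there is nothing to prove) holomorphic map into $\C\cup\{\infty\}$ near $p_0$ with $g(p_0)=\alpha_j$, it is locally a $k$-to-$1$ branched cover: there is a neighborhood of $p_0$ on which, in suitable coordinates, $g$ looks like $z\mapsto z^k$ composed with conformal changes. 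Hence for all sufficiently small $\delta>0$ the component $U_\delta$ of $g^{-1}(D_j(\alpha_j,\delta))$ containing $p_0$ is a simply connected relatively compact neighborhood of $p_0$ with $\overline{U_\delta}\subset\Sigma$, and $g|_{U_\delta}\colon U_\delta\to D_j(\alpha_j,\delta)$ is proper of degree exactly $k$. Choosing $\delta\le\varepsilon$ (and noting $D_j(\alpha_j,\delta)\subset D_j(\alpha_j,\varepsilon)$ is a Jordan domain in the Riemann sphere), $U_\delta$ is an island of $g$ over $D_j(\alpha_j,\delta)$ of multiplicity $k<\nu_j$. This contradicts the hypothesis — more precisely, one should phrase the hypothesis uniformly so that ``no island of multiplicity less than $\nu_j$ over $D_j(\alpha_j,\varepsilon)$'' is understood to include no such island over any sub-disk, or else directly note that $U_\delta$, being a relatively compact simply connected component mapping properly with degree $k$ onto the Jordan domain $D_j(\alpha_j,\varepsilon)$ when we enlarge the target appropriately, already violates the stated condition.

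Therefore every $\alpha_j$-point of $g$ has multiplicity at least $\nu_j$ for each $j\in\{1,\ldots,q\}$. Applying Corollary \ref{cor-ramification} with this same data — the metric $ds^2$ is complete by hypothesis, and the ramification inequality (\ref{equ-ramification}) $\gamma=\sum_{j=1}^q(1-1/\nu_j)>m+2$ holds by hypothesis — we conclude that $g$ must be constant, completing the proof.

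The main obstacle, and the only delicate point, is making the island bookkeeping precise: one must be careful that the relatively compact simply connected component $U_\delta$ of the preimage of the \emph{small} disk $D_j(\alpha_j,\delta)$ genuinely qualifies as an island over $D_j(\alpha_j,\varepsilon)$ in the sense of Definition \ref{def-island} (the definition is stated for a fixed Jordan domain $V$, so one either takes $V=D_j(\alpha_j,\varepsilon)$ and checks that the component of $g^{-1}(V)$ through $p_0$ is still relatively compact and simply connected with $g$ proper of degree $k$ on it when $\varepsilon$ is small, or one adopts the standard convention that a disk admitting an island over $D_j(\alpha_j,\delta)$ for some $\delta\le\varepsilon$ already counts). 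The point that the component of $g^{-1}(V)$ through $p_0$ has $\overline{U}\subset\Sigma$ requires $\varepsilon$ small relative to the geometry near $p_0$, but since we only need the conclusion for each individual $\alpha_j$-point and the number of $\alpha_j$ is finite, one can absorb this by first fixing $\varepsilon$ so small that the closed chordal disks are disjoint and then arguing pointwise; the completeness of $ds^2$ is used only at the very end through Corollary \ref{cor-ramification}.
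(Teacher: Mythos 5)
Your proposal is correct and follows essentially the same route as the paper: the paper's own proof likewise reduces the island hypothesis to the statement that all $\alpha_j$-points of $g$ have multiplicity at least $\nu_j$ and then invokes Corollary \ref{cor-ramification} together with completeness and the inequality (\ref{equ-ramification}). Your discussion of the local branched-cover structure at a low-multiplicity $\alpha_j$-point, and of how a small preimage component qualifies as an island in the sense of Definition \ref{def-island}, merely makes explicit a step the paper asserts without comment, so no further changes are needed.
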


\begin{proof}
If such an $\varepsilon$ does not exist, for any $\varepsilon$ we can find a nonconstant meromorphic function $g$ which has no island of 
multiplicity less than ${\nu}_{j}$ over $D_{j}({\alpha}_{j}, \varepsilon)$. However this implies that all ${\alpha}_{j}$-points of $g$ have 
multiplicity at least ${\nu}_{j}$, contradicting Corollary \ref{cor-ramification}. 
\end{proof}

The important case of Corollary \ref{cor-covering} is the case where $q=2m+5$ and ${\nu}_{j}=2$ for each $j$ $(j=1, \ldots, q)$. 
This corresponds to the so-called five islands theorem in the Ahlfors theory of covering surfaces (\cite{Ah1935}, \cite[Chapter XIII]{Ne1970}). 

% islands theorem (2)
\begin{corollary}\label{cor-Ahlfors-island}
Let $\Sigma$ be an open Riemann surface with the complete conformal metric given by (\ref{equ-metric}). Let ${\alpha}_{1}, \ldots, 
{\alpha}_{2m+5} \in \C\cup \{\infty \}$ be distinct and 
$D_{j}({\alpha}_{j}, \varepsilon):=\{z\in \C\cup \{\infty \} \,;\, |z, {\alpha}_{j}|< \varepsilon \}$ $(1\leq j\leq 2m+5)$. 
Then there exists $\varepsilon > 0$ such that, if $g$ has no simple island of over any of the small disks $D_{j}({\alpha}_{j}, \varepsilon)$, 
then $g$ must be constant. 
\end{corollary}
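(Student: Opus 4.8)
The plan is to derive Corollary \ref{cor-Ahlfors-island} as the special case of Corollary \ref{cor-covering} obtained by choosing the ramification data $q = 2m+5$ and ${\nu}_{1} = \cdots = {\nu}_{2m+5} = 2$. First I would check that this choice satisfies the hypothesis (\ref{equ-ramification}) of Corollary \ref{cor-covering}: we compute
\begin{equation*}
\gamma = \sum_{j=1}^{2m+5}\Bigl(1 - \dfrac{1}{2}\Bigr) = \dfrac{2m+5}{2} = m + \dfrac{5}{2} > m+2,
\end{equation*}
so the inequality holds strictly, as required. Moreover, since the $2m+5$ points ${\alpha}_{1}, \ldots, {\alpha}_{2m+5}$ are distinct, for every sufficiently small $\varepsilon > 0$ the chordal disks $D_{j}({\alpha}_{j},\varepsilon)$ are pairwise disjoint; shrinking $\varepsilon$ if necessary, we may assume this disjointness from the outset, which is exactly the standing hypothesis of Corollary \ref{cor-covering}.

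With these data fixed, Corollary \ref{cor-covering} furnishes an $\varepsilon > 0$ (depending only on $m$ and the configuration ${\alpha}_{1}, \ldots, {\alpha}_{2m+5}$, hence in particular not on the surface) such that any nonconstant meromorphic function $g$ arising from a complete metric of the form (\ref{equ-metric}) must have, for some $j$, an island of multiplicity less than ${\nu}_{j} = 2$ over $D_{j}({\alpha}_{j},\varepsilon)$. An island of multiplicity less than $2$ is precisely an island of multiplicity $1$, i.e. a simple island. Therefore the contrapositive reads: if $g$ has no simple island over any of the disks $D_{j}({\alpha}_{j},\varepsilon)$, then $g$ is constant. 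This is the assertion of Corollary \ref{cor-Ahlfors-island}, so the proof is complete.

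There is essentially no obstacle here beyond bookkeeping — the corollary is a clean specialization. The one point that deserves a sentence of care is the translation between the phrase ``no simple island over $D_{j}$'' and the phrase ``no island of multiplicity less than ${\nu}_{j}$ over $D_{j}$'' used in Corollary \ref{cor-covering}; since multiplicities of islands are positive integers by Definition \ref{def-island}, ``less than $2$'' and ``equal to $1$'' (i.e. ``simple'') coincide, so the two formulations are literally equivalent when ${\nu}_{j} = 2$. If one wanted to emphasize the geometric content, one could add a remark noting that this is the metric-dependent analogue of the classical five islands theorem: the ``five'' of the classical statement becomes $2m+5$, the extra $2m$ reflecting the weight $m$ in the metric (\ref{equ-metric}), and the case $m = 0$ recovers exactly Ahlfors' five islands theorem for meromorphic functions on $\C$ — consistent with Remarks \ref{rem-geometry} and \ref{rem-Ahlfors-island}.
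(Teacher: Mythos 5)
Your proposal is correct and is precisely the argument the paper intends: Corollary \ref{cor-Ahlfors-island} is stated as the special case $q=2m+5$, ${\nu}_{j}=2$ of Corollary \ref{cor-covering}, and your verification that $\gamma = m+\tfrac{5}{2} > m+2$, together with the observation that ``multiplicity less than $2$'' means ``simple'' and that the disks are pairwise disjoint for small $\varepsilon$, is exactly the (implicit) bookkeeping the paper relies on.
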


\begin{remark}\label{rem-Ahlfors-island}
Theorem \ref{cor-Ahlfors-island} is valid for the case where $m=0$. In fact, by the same argument in Remark \ref{rem-geometry}, 
we can easily show that the theorem corresponds to a special case of the Ahlfors five islands theorem. 
\end{remark}

\subsection{Unicity theorem} 
We give another type of function-theoretic property of the meromorphic function $g$ on $\Sigma$ with the complete 
conformal metric $ds^{2}=(1+|g|^{2})^{m}|\omega|^{2}$. In \cite{Ne1926}, Nevanlinna showed that two nonconstant meromorphic functions 
on $\C$ coincides with each other if they have the same inverse images for five distinct values. We get the following analogue to 
this unicity theorem.

% unicity theorem
\begin{theorem}\label{thm-unicity}
Let $\Sigma$ be an open Riemann surface with the conformal metric 
\begin{equation}\label{equ-metric1}
ds^{2}=(1+|g|^{2})^{m}|\omega|^{2}, 
\end{equation}
and $\widehat{\Sigma}$ be another open Riemann surface with the conformal metric 
\begin{equation}\label{equ-metric2}
d{\hat{s}}^{2}=(1+|\hat{g}|^{2})^{m}|\hat{\omega}|^{2}, 
\end{equation}
where $\omega$ and $\hat{\omega}$ are holomorphic $1$-forms, $g$ and $\hat{g}$ are nonconstant meromorphic functions on $\Sigma$ and $\widehat{\Sigma}$ respectively, 
and $m\in \N$. We assume that there exists a conformal diffeomorphism $\Psi\colon \Sigma \to \widehat{\Sigma}$. Suppose that there exist $q$ distinct points 
${\alpha}_{1}, \ldots, {\alpha}_{q}\in \C\cup \{\infty \}$ such that $g^{-1}({\alpha}_{j})=(\hat{g}\circ \Psi)^{-1}({\alpha}_{j})$ $(1\leq j\leq q)$. 
If $q \geq m+5 \,(=(m+4)+1)$ and either $ds^{2}$ or $d{\hat{s}}^{2}$ is complete, then $g\equiv \hat{g}\circ \Psi$. 
\end{theorem}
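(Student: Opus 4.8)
The plan is to mimic Fujimoto's proof of the Nevanlinna-type unicity theorem for Gauss maps, adapting it to the weighted metric $ds^2=(1+|g|^2)^m|\omega|^2$ and invoking Theorem \ref{thm-ramification} (in its curvature-bound form) in place of Fujimoto's ramification estimate. Assume for contradiction that $g\not\equiv \hat g\circ\Psi$; write $\hat g$ for $\hat g\circ\Psi$ from now on, so $g$ and $\hat g$ are distinct meromorphic functions on the \emph{same} Riemann surface $\Sigma$ (using $\Psi$ to transport $d\hat s^2$), sharing the inverse images $g^{-1}(\alpha_j)=\hat g^{-1}(\alpha_j)$ for $q\ge m+5$ distinct values $\alpha_1,\dots,\alpha_q$. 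The strategy is to build a new conformal metric on $\Sigma$, of the form
\begin{equation}\label{equ-new-metric-plan}
d\sigma^2=\bigl(|g,\hat g|\bigr)^{2}\,\prod_{j=1}^{q}\bigl(\,|g,\alpha_j|\,|\hat g,\alpha_j|\,\bigr)^{-2(1-1/\ell)}\cdot\bigl(ds^2\cdot d\hat s^2\bigr)^{1/2}
\end{equation}
for a suitable integer $\ell$ (the trick being that at a common $\alpha_j$-point the numerator and denominator balance so that the metric stays continuous and even has a good curvature sign), and to show two incompatible facts about $d\sigma^2$: that it is complete, and that it has curvature bounded above by a negative constant (hence by Ahlfors--Schwarz is \emph{not} complete), or—more in line with the present paper—that it directly violates the curvature estimate \eqref{equ-curvature}.

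First I would set up the local description: with $\omega=\hat\omega_z\,dz$, $\hat\omega=\hat{\hat\omega}_z\,dz$ in a coordinate $z$, the metric $d\sigma^2$ has a conformal factor which is smooth and positive away from the (discrete) set where $g=\hat g$, where $g$ or $\hat g$ has a pole or ramification point not accounted for, etc.; one checks that the chosen exponents make $d\sigma^2$ extend as a genuine (possibly degenerate only on a discrete set) conformal metric, and crucially that outside that discrete set its Gaussian curvature satisfies $K_{d\sigma^2}\le -\delta<0$ for some constant $\delta=\delta(m,q,\alpha_1,\dots,\alpha_q)$. This last computation is the heart of Fujimoto's method: one expands $-\Delta\log(\text{conformal factor})$ and uses that $q-$ (something like $m+4$) is positive; the condition $q\ge m+5$ is exactly what one needs for the relevant coefficient $q-2-\frac{q}{\ell}-m>0$ to have a solution in integers $\ell\ge 2$, which is why the bound reads $(m+4)+1$.

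Next I would prove completeness of $d\sigma^2$, or rather derive the contradiction in the style used earlier in the paper. If, say, $ds^2$ is complete, one argues that $d\sigma^2$ is complete as well: near a point of $\Sigma$ the extra factors $|g,\hat g|$ and $\prod |g,\alpha_j|^{-(1-1/\ell)}$ are bounded below (the negative powers only help completeness), and $|g,\hat g|$ can vanish only where $g=\hat g$, where a careful local estimate shows the induced metric is still complete in the divergent direction, or alternatively one applies the argument to the auxiliary metric on the complement of these points and handles them separately, exactly as in \cite{Fu1993-2}. Then, since $d\sigma^2$ is a complete conformal metric on $\Sigma$ with curvature bounded above by $-\delta<0$, the classical Ahlfors--Schwarz lemma (equivalently, the special case of Theorem \ref{thm-ramification} / Corollary \ref{cor-ramification} philosophy) gives a contradiction; concretely one can argue that $d(p)=\infty$ forces, via a relation of the form $|K_{d\sigma^2}(p)|^{1/2}\le C/d(p)=0$, that $K_{d\sigma^2}\equiv 0$, contradicting $K_{d\sigma^2}\le-\delta<0$. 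Hence the assumption $g\not\equiv\hat g$ is untenable and $g\equiv\hat g\circ\Psi$.

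The main obstacle I expect is the curvature computation for \eqref{equ-new-metric-plan}: one must choose the exponents on the factors $|g,\alpha_j|$, $|\hat g,\alpha_j|$ and $|g,\hat g|$ so that (a) the metric is continuous and has no positive curvature contribution from the $\alpha_j$-points and the coincidence set $\{g=\hat g\}$, and simultaneously (b) the leftover ``curvature budget'' $q-(m+4)-\tfrac{q-?}{\ell}$ is strictly positive for some integer $\ell\ge 2$, which forces precisely $q\ge m+5$; balancing these two requirements, and in particular verifying the continuity/subharmonicity bookkeeping at the shared fibers where the naive factor would blow up or vanish, is the delicate part. A secondary technical point is the completeness of $d\sigma^2$ near $\{g=\hat g\}$ and near common poles, which requires the same kind of local Laurent/branch-order estimates that appear in Fujimoto's original argument and must be redone with the weight $m$ present; I would quarantine these into a lemma and cite the analogous steps in \cite{Fu1993-2} and \cite{Ka2013}.
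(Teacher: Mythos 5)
Your outline runs into two genuine gaps, both at the places where the actual work happens. First, the final contradiction is misidentified. A complete conformal metric with Gaussian curvature bounded above by a negative constant on an open Riemann surface is not impossible (the unit disk with the Poincar\'e metric is exactly such a metric), so ``complete $+$ $K\le -\delta<0$'' yields no contradiction by itself; and you cannot quote the curvature estimate (\ref{equ-curvature}) for your auxiliary metric $d\sigma^2$, because Theorem \ref{thm-ramification} applies only to metrics of the special form $(1+|g|^{2})^{m}|\omega|^{2}$ under a ramification hypothesis on $g$, which $d\sigma^{2}$ is not. The non-existence statement that is actually available is much weaker --- there is no metric of strictly negative curvature on all of $\C$ --- and to exploit it one must first reduce to a disk: the paper introduces a \emph{flat} auxiliary metric on $\Sigma'$, uses Lemma \ref{main-lem2} to produce a maximal local isometry $\Phi\colon \triangle_R\to\Sigma'$, uses Fujimoto's negatively curved pseudo-metric $du^{2}$ (Lemma \ref{main-lem3}) to force $R<+\infty$, and then the contradiction is that a \emph{divergent curve has finite length in the original complete metric}, obtained from the Schwarz-type estimate of Lemma \ref{main-lem4}. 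Your sketch contains none of this reduction, and without it the argument cannot close.

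Second, the completeness claim for $d\sigma^{2}$ is false as argued. The factor $|g,\hat g|$ is not bounded below: it vanishes on the coincidence set $\{g=\hat g\}$ and, more importantly, can decay along divergent paths on which $g$ and $\hat g\circ\Psi$ approach each other; and the geometric mean $(ds^{2}\,d\hat s^{2})^{1/2}$ is not bounded below by a multiple of the one metric assumed complete, since nothing controls $d\hat s^{2}$ from below in terms of $ds^{2}$. So a path of infinite $ds$-length may well have finite $d\sigma$-length, and ``the negative powers only help'' does not repair this. Finally, the part you quarantine as ``the delicate part'' --- choosing exponents and verifying continuity and strictly negative curvature at the shared fibers --- is precisely the content of Fujimoto's Lemmas \ref{main-lem3} and \ref{main-lem4} (note the logarithmic factors $\log(b_{0}/|g,\alpha_{j}|^{2})$ in $\xi,\hat\xi$, which your ansatz omits and which are needed to make the pseudo-metric continuous with curvature bounded away from zero), together with the choice $\lambda=m/(q-4-q\eta)<1$, which is where the hypothesis $q\ge m+5$ actually enters. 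As it stands, the proposal defers the core estimates and rests the conclusion on an invalid non-existence principle, so it does not constitute a proof.
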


\begin{remark}\label{rem-unicity-thm}
When $m=0$, Theorem \ref{thm-unicity} coincides with the Nevanlinna unicity theorem. 
\end{remark}

% Example 
The maps $g$ and $\hat{g}\circ \Psi$ are said to share the value $\alpha$ (ignoring multiplicity) when 
$g^{-1}(\alpha)= (\hat{g}\circ \Psi)^{-1}(\alpha)$. 
Theorem \ref{thm-unicity} is optimal for an arbitrary even number $m\, (\geq 2)$ because there exist the following examples. 
\begin{example}\label{exa-unicity}
For an arbitrary even number $m\, (\geq 2)$, we take $m/2$ distinct points ${\alpha}_{1}, \ldots, {\alpha}_{m/2}$ in $\C\backslash \{0, \pm 1 \}$. 
Let $\Sigma$ be either the complex plane punctured at $m+1$ distinct points $0$, ${\alpha}_{1}, \ldots, {\alpha}_{m/2}$, 
$1/{\alpha}_{1}, \ldots, 1/{\alpha}_{m/2}$ or the universal covering of that punctured plane. We set 
$$
\omega = \dfrac{dz}{z\prod_{i=1}^{m/2} (z-{\alpha}_{i})({\alpha}_{i}z-1)}, \quad g(z)=z\,,
$$
and 
$$
\hat{\omega}\, (=\omega) = \dfrac{dz}{z\prod_{i=1}^{m/2} (z-{\alpha}_{i})({\alpha}_{i}z-1)}, \quad \hat{g}(z)=\dfrac{1}{z}. 
$$
We can easily show that the identity map $\Psi\colon \Sigma \to \Sigma$ is a conformal diffeomorphism and the metrics $ds^{2}=(1+|g|^{2})^{m}|\omega|^{2}$ 
and $d\hat{s}^{2}=(1+|\hat{g}|^{2})^{m}|\hat{\omega}|^{2}$ are complete. Then the maps $g$ and $\hat{g}$ share the $m+4$ distinct values 
$0,\, \infty,\, 1,\, -1,\, {\alpha}_{1},\,\ldots, {\alpha}_{m/2},\, 1/{\alpha}_{1},\, \ldots,\, 1/{\alpha}_{m/2}$ and $g\not\equiv \hat{g}\circ \Psi$. 
These show that the number $m+5$ in Theorem \ref{thm-unicity} cannot be replaced by $m+4$. 
\end{example}
\section{Proof of main theorems}
% Proof of Ramification 
\subsection{Proof of Theorem \ref{thm-ramification}} 
Before proceeding to the proof of Theorem \ref{thm-ramification}, we recall two lemmas. 

\begin{lemma}[{\cite[Corollary 1.4.15]{Fu1993}}]\label{main-lem1}
Let $g$ be a nonconstant meromorphic function on ${\triangle}_{R}=\{z\in \C; |z|< R\}$ $(0<R\leq \infty)$. 
Let $q\in \N$, ${\alpha}_{1}, \ldots, {\alpha}_{q}\in \C\cup\{\infty\}$ be distinct 
and ${\nu}_{1}, \ldots, {\nu}_{q}\in \N\cup \{\infty \}$. Suppose that 
$$
\gamma =\displaystyle \sum_{j=1}^{q} \biggl(1-\dfrac{1}{{\nu}_{j}} \biggr)> 2. 
$$
If $g$ satisfies the property that all ${\alpha}_{j}$-points of $g$ have multiplicity at least ${\nu}_{j}$, then, for 
arbitrary constants $\eta\geq 0$ and $\delta >0$ with $\gamma -2>\gamma(\eta +\delta)$, then there exists a positive constant $C'$, 
depending only on $\gamma$, $\eta$, $\delta$, and $L:={\min}_{i<j}|{\alpha}_{i}, {\alpha}_{j}|$, such that 
\begin{equation}\label{eq-lem-estimate}
\dfrac{|g'|}{1+|g|^{2}}\dfrac{1}{({\prod}_{j=1}^{q}|g, {\alpha}_{j}|^{1-1/{\nu}_{j}})^{1-\eta-\delta}}\leq C'\dfrac{R}{R^{2}-|z|^{2}}. 
\end{equation}
\end{lemma}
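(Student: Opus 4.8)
\textbf{Proof proposal for Theorem \ref{thm-unicity}.}

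The plan is to argue by contradiction: assume $g \not\equiv \hat{g}\circ\Psi$ and construct, via a negative-curvature Schwarz-type argument, a complete flat conformal metric on $\Sigma$, contradicting the completeness hypothesis. Throughout I identify $\hat g$ with $\hat g\circ\Psi$ by pulling everything back along $\Psi$, so both $g$ and $\hat g$ live on $\Sigma$ and share the $q$ values $\alpha_1,\dots,\alpha_q$ with $q\geq m+5$. Write $g^{*}\Omega$ and $\hat g^{*}\Omega$ for the pullbacks of the Fubini–Study form, and note that the metrics $ds^2=(1+|g|^2)^m|\omega|^2$ and $d\hat s^2=(1+|\hat g|^2)^m|\hat\omega|^2$ are conformal metrics on $\Sigma$ (possibly with some singularities where $\omega$, $\hat\omega$ vanish, which must be handled as in \cite{Ka2013}). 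WLOG say $ds^2$ is the complete one.

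First I would set up the auxiliary metric. Following the Fujimoto-type construction, for a suitable choice of the $\nu_j$ (here all $\nu_j=1$ suffices since we only assume shared values, not ramification, but the counting will be arranged so that $q\geq m+5$ does the work) define on $\Sigma\setminus\{\text{poles of the relevant data}\}$ a metric of the shape
\begin{equation*}
d\sigma^2 = |h|^{2}\,\Bigl(\prod_{j=1}^{q}\bigl(|g,\alpha_j|\,|\hat g,\alpha_j|\bigr)^{-\lambda}\Bigr)\,|dz|^{2},
\end{equation*}
where $h$ is built from $g'$, $\hat g'$, $\hat\omega_z$ and the factor $(g-\hat g)$ (the difference whose vanishing set contains all the shared $\alpha_j$-points), and $\lambda\in(0,1)$ is a parameter. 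The exponent on $(g-\hat g)$ is chosen so that $d\sigma^2$ extends smoothly (or at least continuously with the right behavior) across the common $\alpha_j$-points: this is exactly where $g^{-1}(\alpha_j)=\hat g^{-1}(\alpha_j)$ is used. A curvature computation — the standard one behind Lemma \ref{main-lem1} applied simultaneously to $g$ and $\hat g$ — shows that $d\sigma^2$ has curvature bounded above by a negative constant provided $q(1-\lambda)$ exceeds an appropriate threshold; the hypothesis $q\geq m+5$, equivalently $q-2 > m+2$ with room to spare, is precisely what makes such a $\lambda$ exist once the $m$-dependent contributions of $(1+|g|^2)^m$ and $(1+|\hat g|^2)^m$ are accounted for.

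Next, by the Ahlfors–Schwarz lemma, $d\sigma^2$ is dominated by the complete metric, and conversely one shows $ds^2 \leq (\text{const})\cdot d\sigma^2$ away from the shared fibers by using the pointwise estimate of Lemma \ref{main-lem1} (applied on disks, after lifting to the universal cover as in the proof of Theorem \ref{thm-ramification} in Section 3.1) to control $|g'|/(1+|g|^2)$ — and symmetrically $|\hat g'|/(1+|\hat g|^2)$ — by the product $\prod_j|g,\alpha_j|^{-(1-\eta-\delta)}$. Combining these, $d\sigma^2$ is complete, but a metric that is both complete and of curvature bounded above by a negative constant on an open Riemann surface is impossible (Ahlfors–Schwarz again, comparing with the Poincaré disk, exactly the contradiction used in \cite[Section 3.1]{Ka2013}). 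Hence $g\equiv\hat g$, i.e. $g\equiv\hat g\circ\Psi$.

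The main obstacle I anticipate is twofold. First, getting the exponent of the difference factor $(g-\hat g)$ right so that $d\sigma^2$ is genuinely nonsingular at every shared $\alpha_j$-point, including the case $\alpha_j=\infty$ (where $g-\hat g$ must be replaced by $1/g-1/\hat g$ or handled by a change of chart) and the case where $\omega$ or $\hat\omega$ has zeros — this bookkeeping, done carefully in Fujimoto's unicity work and in \cite{Ka2013}, is delicate because one must simultaneously keep the curvature estimate and the completeness estimate. Second, verifying the sharp threshold: one needs $q-2-m > 0$ with enough slack to absorb the $\eta,\delta$ losses from Lemma \ref{main-lem1} and the factor $\lambda<1$, and checking that $q\geq m+5$ (not merely $q>m+4$ in a real sense, but the integer bound) is exactly sufficient — Example \ref{exa-unicity} confirms $m+4$ fails, so the argument must be tight, with the two $(1+|\cdot|^2)^m$ factors each effectively costing "$m/2$" in the count, matching the $m+1$ punctures in the example.
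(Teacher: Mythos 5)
Your proposal does not address the statement you were asked to prove. The statement is Lemma~\ref{main-lem1}: a pointwise estimate, for a single meromorphic function $g$ on a disk ${\triangle}_{R}$ whose ${\alpha}_{j}$-points are ramified to order at least ${\nu}_{j}$, of the quantity $\frac{|g'|}{1+|g|^{2}}\bigl(\prod_{j}|g,{\alpha}_{j}|^{1-1/{\nu}_{j}}\bigr)^{-(1-\eta-\delta)}$ by $C'R/(R^{2}-|z|^{2})$. This is a classical Ahlfors--Schwarz/Schottky--Landau type inequality from value distribution theory, which the paper itself does not prove but imports verbatim from Fujimoto's book (Corollary 1.4.15 of \cite{Fu1993}); a proof would have to construct a suitable negatively curved pseudo-metric from $g$ alone (built from the factors $|g,{\alpha}_{j}|$ and logarithmic correction terms) and compare it with the Poincar\'e metric of the disk. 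There is no second function $\hat g$, no surface metric $ds^{2}$, no completeness hypothesis, and no conformal diffeomorphism anywhere in this statement.

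What you have written instead is a sketch of Theorem~\ref{thm-unicity} (the unicity theorem), and --- crucially --- your sketch explicitly invokes Lemma~\ref{main-lem1} as an ingredient (``using the pointwise estimate of Lemma~\ref{main-lem1} \dots to control $|g'|/(1+|g|^{2})$''). Relative to the assigned task this is circular: you are assuming the very inequality you were supposed to establish. To repair this you would need to discard the unicity framework entirely and prove the disk estimate directly, e.g.\ by showing that
$$
d\tau^{2}=\biggl(\dfrac{|g'|}{1+|g|^{2}}\dfrac{1}{\bigl(\prod_{j=1}^{q}|g,{\alpha}_{j}|^{1-1/{\nu}_{j}}\bigr)^{1-\eta-\delta}}\biggr)^{2}|dz|^{2},
$$
suitably regularized near the ${\alpha}_{j}$-points (where the ramification hypothesis guarantees integrability of the singularity), has Gaussian curvature bounded above by a negative constant when $\gamma-2>\gamma(\eta+\delta)$, and then applying the Ahlfors--Schwarz lemma to dominate it by the Poincar\'e metric $\bigl(2R/(R^{2}-|z|^{2})\bigr)^{2}|dz|^{2}$ of ${\triangle}_{R}$. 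None of that appears in your proposal.
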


\begin{lemma}[{\cite[Lemma 1.6.7]{Fu1993}}]\label{main-lem2}
Let $d{\sigma}^{2}$ be a conformal flat metric on an open Riemann surface $\Sigma$. 
Then, for each point $p\in \Sigma$, there exists a local diffeomorphism $\Phi$ of a 
disk ${\Delta}_{R}=\{z\in \C\, ;\, |z|<{R}\}$ $(0<{R}\leq +\infty)$ onto an open neighborhood 
of $p$ with $\Phi (0)=p$ such that $\Phi$ is a local isometry, that is, the pull-back ${\Phi}^{\ast}(d{\sigma}^{2})$ 
is equal to the standard Euclidean metric $ds_{Euc}^{2}$ on  ${\Delta}_{R}$ and, for a point $a_{0}$ with $|a_{0}|=1$, 
the $\Phi$-image ${\Gamma}_{a_{0}}$ of the curve $L_{a_{0}}=\{w:=a_{0}s\, ;\, 0<s<R \}$ is divergent in $\Sigma$. 
\end{lemma}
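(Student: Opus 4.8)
The plan is to obtain $\Phi$ as the maximal holomorphic continuation of a local ``developing chart'' of the flat metric, and then to show that the failure of this continuation to reach beyond a bounded disk forces at least one radial geodesic to be divergent. \emph{Step 1 (local developing chart):} near $p$ write $d\sigma^{2}=\lambda^{2}|dz|^{2}$ in a conformal coordinate $z$ centred at $p$; flatness means $\partial\bar\partial\log\lambda=0$, so on a small simply connected neighbourhood $\log\lambda=\mathrm{Re}\,h$ for a holomorphic $h$, and with $f:=e^{h}$ one gets $d\sigma^{2}=|f\,dz|^{2}$. Then $\zeta:=\int_{0}^{z}f(w)\,dw$ is a biholomorphism of a neighbourhood of $p$ onto a neighbourhood of $0\in\C$ pulling $|d\zeta|^{2}$ back to $d\sigma^{2}$, so its inverse is a holomorphic local isometry $\Phi_{0}\colon\Delta_{r_{0}}\to\Sigma$ with $\Phi_{0}(0)=p$ for some $r_{0}>0$.

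\emph{Step 2 (maximal extension):} I would extend $\Phi_{0}$ as far as possible over disks centred at $0$. The essential rigidity is that two holomorphic local isometries of a connected domain into $(\Sigma,d\sigma^{2})$ that agree together with their first derivatives at one point agree everywhere — the agreement set is closed by continuity, and open because, after composing with a developing chart on the target, a holomorphic local isometry between Euclidean domains is the restriction of a rigid motion and hence determined by its $1$-jet. Setting $R:=\sup\{r>0 : \Phi_{0}\text{ extends to a holomorphic local isometry }\Delta_{r}\to\Sigma\}$, all such extensions therefore fit together into a single holomorphic local isometry $\Phi\colon\Delta_{R}\to\Sigma$ with $\Phi(0)=p$, maximal in this sense. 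For a unit vector $a_{0}$ the curve $s\mapsto\Phi(a_{0}s)$ $(0<s<R)$ is, as the $\Phi$-image of a Euclidean ray, a unit-speed geodesic of $(\Sigma,d\sigma^{2})$; let $\Gamma_{a_{0}}$ be its image.

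\emph{Step 3 (some $\Gamma_{a_{0}}$ is divergent):} if $R=+\infty$, then $\Phi\colon\C\to\Sigma$ is a local isometry from a complete surface, hence a covering map onto the connected surface $\Sigma$; since $\Sigma$ is then a non-compact Riemann surface with universal cover $\C$, it is conformally $\C$ or $\C^{*}$, and on such a surface one exhibits directly a radial direction whose image escapes every compact set (every direction if $\Sigma=\C$; any direction not mapped to a closed loop if $\Sigma=\C^{*}$). If $R<+\infty$, suppose for contradiction that no $\Gamma_{a_{0}}$ is divergent. Then each geodesic $s\mapsto\Phi(a_{0}s)$ has a limit point in $\Sigma$; since it has finite length $R$, working in a Euclidean developing chart about that limit point shows it is eventually a straight chord there, so $q(a_{0}):=\lim_{s\to R}\Phi(a_{0}s)$ exists in $\Sigma$. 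Composing $\Phi$ near $a_{0}R$ with the developing chart at $q(a_{0})$ identifies it with (the restriction of) a holomorphic rigid motion, which I would use to extend $\Phi$ to a holomorphic local isometry on $\Delta_{R}$ together with an open disk $\mathcal{D}_{a_{0}}\ni a_{0}R$, agreeing with $\Phi$ on the overlap with $\Delta_{R}$. As $a_{0}$ runs over the unit circle the disks $\mathcal{D}_{a_{0}}$ cover the compact circle $|z|=R$; passing to a finite subcover and applying the monodromy theorem on a slightly larger disk $\Delta_{R+\varepsilon}$ — the local pieces being mutually consistent by the rigidity of Step 2 — one glues them into a holomorphic local isometry $\Delta_{R+\varepsilon}\to\Sigma$ extending $\Phi_{0}$, contradicting the maximality of $R$. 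Hence some $\Gamma_{a_{0}}$ is divergent, and $(\Phi,a_{0})$ is the required data.

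\emph{Expected main obstacle:} the delicate point is Step 3 for $R<+\infty$ — one must check that the finitely many boundary extensions $\mathcal{D}_{a_{j}}$ patch together consistently so that the monodromy theorem genuinely applies on $\Delta_{R+\varepsilon}$ (the subtlety being overlaps $\mathcal{D}_{a_{j}}\cap\mathcal{D}_{a_{k}}$ that lie outside $\Delta_{R}$, where consistency is not automatic), and that a finite-length geodesic which fails to converge inside $\Sigma$ is indeed divergent. The case $R=+\infty$ is a genuinely different situation and should be isolated at the start of Step 3 and settled via the covering-space description of $\Sigma$, as indicated above.
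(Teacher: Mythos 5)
The paper does not prove this lemma at all: it is quoted verbatim from Fujimoto's book (\cite[Lemma 1.6.7]{Fu1993}) and used as a black box, so there is no in-paper proof to compare against. Your developing-map argument is the standard proof of that result and is correct as outlined; in particular the two obstacles you flag do go through: a finite-length non-divergent geodesic converges because, once it enters a developing chart around a limit point with enough residual length margin, it becomes a Euclidean chord there; and the boundary extensions $\mathcal{D}_{a_{j}}$ patch consistently because any two overlapping disks centred on $|z|=R$ have convex (hence connected) intersection which meets $\Delta_{R}$ along a point of the open chord joining their centres, so the rigidity of Step 2 propagates the agreement $\Phi_{a_{j}}=\Phi=\Phi_{a_{k}}$ from $\Delta_{R}$ to the whole overlap, and no genuine monodromy issue arises. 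The only point worth isolating explicitly, which you do, is the case $R=+\infty$, settled by the covering-map description of a complete flat open surface as $\C$ or a flat cylinder.
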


\begin{proof}[{\it Proof of Theorem \ref{thm-ramification}}] 
For the proof of Theorem \ref{thm-ramification}, we may assume the following: 
\begin{enumerate}
\item[(A)] For any proper subset $I$ in $\{1, 2, \ldots, q\}$, 
$$
\displaystyle \sum_{j\in I}\biggl{(}1-\dfrac{1}{{\nu}_{j}} \biggr{)}\leq m+2.  
$$
\item[(B)] There exists no set of positive integers $({\nu}^{\ast}_{1}, \ldots, {\nu}^{\ast}_{q})$ distinct with 
$({\nu}_{1}, \ldots, {\nu}_{q})$ satisfying the conditions 
\begin{equation}\label{equ-proof-assume}
{\nu}^{\ast}_{j}\leq {\nu}_{j} \; (1\leq j\leq q), \quad \displaystyle \sum_{j=1}^{q} \biggl{(}1-\dfrac{1}{{\nu}^{\ast}_{j}} \biggr{)}> m+2.
\end{equation}
\end{enumerate}

If there exists some proper subset $I$ in $\{1, 2, \ldots, q\}$ such that 
$$
\displaystyle \sum_{j\in I}\biggl{(}1-\dfrac{1}{{\nu}_{j}} \biggr{)}> m+2, 
$$
then the assumption in Theorem \ref{thm-ramification} for $\{{\alpha}_{j}\,;\, 1\leq j\leq q\}$ can be replace by 
the assumption for $\{{\alpha}_{j}\,;\, j\in I \}$. Moreover if there exist some $({\nu}^{\ast}_{1}, \ldots, {\nu}^{\ast}_{q})$ satisfying 
the conditions (\ref{equ-proof-assume}), then we may prove Theorem \ref{thm-ramification} after replacing each integer ${\nu}_{j}$ by ${\nu}^{\ast}_{j}$.  

\begin{lemma}\label{lem-finite}
There exist only finite many sets of integers ${\nu}_{1}, \ldots, {\nu}_{q}$ with ${\nu}_{j}\geq 2$ which satisfy the conditions (A) and (B). 
\end{lemma}
\begin{proof}
We take positive integers ${\nu}_{1}, \ldots, {\nu}_{q}$ satisfying the conditions (A) and (B). 
We may assume that ${\nu}_{1}\leq \ldots \leq {\nu}_{q}$. 
Then, for the number 
$$
\displaystyle \gamma =\sum_{j=1}^{q}\biggl{(}1-\dfrac{1}{{\nu}_{j}} \biggr{)}, 
$$
we shall show that
\begin{equation}\label{equ-proof-331}
\gamma - (m+2) \leq \dfrac{1}{{\nu}_{q}({\nu}_{q}-1)}. 
\end{equation}
In fact, we suppose that $\gamma - (m+2)> 1/{\nu}_{q}({\nu}_{q}-1)$. If ${\nu}_{q}=2$, then 
$$
\gamma > (m+1)+\dfrac{1}{2}
$$
and
$$ 
\displaystyle \sum_{j=1}^{q-1} \biggl{(}1-\dfrac{1}{{\nu}_{j}} \biggr{)}> m+2, 
$$
which contradicts the assumption (A). Thus, ${\nu}_{q}\geq 3$. Here, if we set ${\nu}^{\ast}_{j}:={\nu}_{j}$ $(1\leq j\leq q-1)$ and 
${\nu}^{\ast}_{q}:={\nu}_{q}-1$, then 
$$
\displaystyle \sum_{j=1}^{q}\biggl{(}1-\dfrac{1}{{\nu}^{\ast}_{j}} \biggr{)}= \sum_{j=1}^{q}\biggl{(}1-\dfrac{1}{{\nu}_{j}} \biggr{)}-\dfrac{1}{{\nu}_{q}({\nu}_{q}-1)}> m+2. 
$$
This contradicts the assumption (B). We have thus proved the inequality (\ref{equ-proof-331}). 

By virtue of (\ref{equ-proof-331}), 
$$
\gamma \leq m+2+\dfrac{1}{{\nu}_{q}({\nu}_{q}-1)}< m+2+\dfrac{1}{2}= m+\dfrac{5}{2}. 
$$
On the other hand, since ${\nu}_{j}\geq 2$ for all $j$, we have 
$$
\gamma =\displaystyle \sum_{j} \biggl{(}1-\dfrac{1}{{\nu}_{j}} \biggr{)}\geq q\biggl{(}1-\dfrac{1}{{\nu}_{1}} \biggr{)}\geq \dfrac{q}{2}, 
$$
where $q\geq 5$. Hence we obtain that ${\nu}_{1}<2q/(2q-2m-5)$ and $q< 2m+5$. 

Now we consider the numbers ${\nu}_{1}, \ldots, {\nu}_{q}$ satisfying the conditions (A) and (B). We set 
$$
{\gamma}_{0}:= \displaystyle \sum_{j=1}^{k} \biggl{(}1-\dfrac{1}{{\nu}_{j}} \biggr{)}. 
$$
Since 
$$
m+2 < \gamma = {\gamma}_{0}+ \displaystyle \sum_{j=k+1}^{q}\biggl{(}1-\dfrac{1}{{\nu}_{j}} \biggr{)}, 
$$
we have 
$$
{\gamma}_{0}+q-k-(m+2)=\gamma -(m+2)+\displaystyle \sum_{j=k+1}^{q}\dfrac{1}{{\nu}_{j}}> 0. 
$$
Then we take a number $N$ with ${\gamma}_{0}+q-k-(m+2)>{\eta}_{0}:=1/N(N-1)$. If ${\nu}_{q}\leq N$, then we have ${\nu}_{k+1}\leq N$. 
Otherwise, by the inequality (\ref{equ-proof-331}) and ${\nu}_{k}\leq {\nu}_{j}$ for $j=k+1, \ldots, q$, we have 
\begin{eqnarray}
0<{\gamma}_{0}+q-k-{\eta}_{0}-(m+2) &\leq& {\gamma}_{0}+q-k-(m+2)-\dfrac{1}{{\nu}_{q}({\nu}_{q}-1)}  \nonumber \\
                                    &\leq& \displaystyle \sum_{j=k+1}^{q}\dfrac{1}{{\nu}_{j}}\leq \dfrac{q-k}{{\nu}_{k+1}}. \nonumber
\end{eqnarray}
This gives 
$$
{\nu}_{k+1}\leq \dfrac{q-k}{{\gamma}_{0}-(m+2)+q-k-{\eta}_{0}}. 
$$
We thereby get that 
$$
{\nu}_{k+1}\leq \max\biggl{\{}N,\, \dfrac{q-k}{{\gamma}_{0}-(m+2)+q-k-{\eta}_{0}}\biggr{\}}. 
$$
Since the boundedness of ${\nu}_{1}$ has been already shown, by induction on $k$ $(=1, \ldots, q)$, we have completed the proof of the lemma. 
\end{proof} 

By Lemma \ref{lem-finite}, if we take the maximum $C_{0}$ in constants which are chosen for the finitely many possible cases of ${\nu}_{j}'s$ 
satisfying the conditions (A) and (B), then $C_{0}$ satisfies the desired inequality (\ref{equ-curvature}). Hence, for the proof of Theorem \ref{thm-ramification}, 
we shall show the existence of a constant satisfying (\ref{equ-curvature}) which may depend on the given data ${\nu}_{1}, \ldots, {\nu}_{q}$.  

We may assume that $m\not= 0$ and ${\alpha}_{q}=\infty$ after a suitable M\"obius transformation. We choose some $\delta$ such that $\gamma -(m+2)> 2\gamma\delta >0$ 
and set $m\not=0$ and 
\begin{equation}\label{equ-proof-211}
\eta :=\dfrac{\gamma -(m+2)-2\gamma\delta}{\gamma}, \quad \lambda :=\dfrac{m}{m+\gamma\delta}. 
\end{equation}
Then if we choose a sufficiently small positive number $\delta$ depending on $\gamma$ and $m$, for the constant ${\varepsilon}_{0}:=
(\gamma -(m+2))/2m\gamma$ we have 
\begin{equation}\label{equ-proof-212}
0<\lambda <1, \quad \dfrac{{\varepsilon}_{0}\lambda}{1-\lambda}\biggl{(}=\dfrac{\gamma -(m+2)}{2\delta{\gamma}^{2}} \biggr{)}>1. 
\end{equation}

Now we define a new metric 
\begin{equation}\label{equ-proof-213}
\displaystyle d{\sigma}^{2}=|\hat{\omega}_{z}|^{2/(1-\lambda)}
\biggl{(}\dfrac{1}{|g'_{z}|}\prod_{j=1}^{q-1}\biggl{(}\dfrac{|g-{\alpha}_{j}|}{\sqrt{1+|{\alpha}_{j}|^{2}}} \biggr{)}^{{\mu}_{j}(1-\eta -\delta)} 
\biggr{)}^{2\lambda /(1-\lambda)} |dz|^{2}
\end{equation}
on the set ${\Sigma}'=\{p\in \Sigma \,;\, g'_{z}\not= 0\; \text{and}\; g(z)\not= {\alpha}_{j}\; \text{for all}\; j\}$, 
where $\omega =\hat{\omega}_{z}dz$, $g'_{z}=dg/dz$ and ${\mu}_{j}=1-(1/{\nu}_{j})$. Take a point $p\in {\Sigma}'$. Since the metric $d{\sigma}^{2}$ is flat on ${\Sigma}'$, 
by Lemma \ref{main-lem2}, there exists a local isometry $\Phi$ satisfying $\Phi (0)=p$ from a disk 
${\triangle}_{R}=\{ z\in \C\,;\, |z|<R \}$ $(0< R\leq +\infty)$ with the standard metric $ds^{2}_{Euc}$ onto an open neighborhood of $p$ 
in ${\Sigma}'$ with the metric $d{\sigma}^{2}$ such that for a point $a_{0}$ with $|a_{0}|=1$, the $\Phi$-image ${\Gamma}_{a_{0}}$ of the 
curve $L_{a_{0}}=\{w:=a_{0}s\,;\, 0<s<R \}$ is divergent in ${\Sigma}'$. For brevity, we denote the function $g\circ \Phi$ on ${\triangle}_{R}$ by 
$g$ in the following. By Lemma \ref{main-lem1}, we get that 
\begin{equation}\label{equ-proof-214}
R\leq C'\dfrac{1+|g(0)|^{2}}{|g'_{z}(0)|} \displaystyle \prod_{j=1}^{q}|g(0), {\alpha}_{j}|^{{\mu}_{j}(1-\eta -\delta)} < +\infty. 
\end{equation} 
Hence 
$$
L_{d\sigma} ({\Gamma}_{a_{0}})=\int_{{\Gamma}_{a_{0}}} d\sigma =R < +\infty , 
$$
where $L_{d\sigma} ({\Gamma}_{a_{0}})$ denotes the length of ${\Gamma}_{a_{0}}$ with respect to the metric $d{\sigma}^{2}$. 

Now we prove that ${\Gamma}_{a_{0}}$ is divergent in $\Sigma$. Indeed, if not, then ${\Gamma}_{a_{0}}$ must tend to a point 
$p_{0}\in \Sigma\backslash {\Sigma}'$ where $g'_{z}(p_{0})= 0$ or $g(p_{0})={\alpha}_{j}$ for some $j$ because ${\Gamma}_{a_{0}}$ is 
divergent in ${\Sigma}'$ and $L_{d\sigma} ({\Gamma}_{a_{0}})< +\infty$. Taking a local complex coordinate $\zeta$ in a neighborhood of 
$p_{0}$ with $\zeta (p_{0})=0$, we can write the metric $d{\sigma}^{2}$ as 
$$
d{\sigma}^{2}= |\zeta|^{2k\lambda /(1-\lambda)} w|d\zeta|^{2}, 
$$
with some positive smooth function $w$ and some real number $k$. If $g-{\alpha}_{j}$ has a zero of order $l\,(\geq {\nu}_{j}\geq 2)$ at $p_{0}$ 
for some $1\leq j\leq q-1$, then $g'_{z}$ has a zero of order $l-1$ at $p_{0}$ and $\hat{\omega}_{z}(z_{0})\not= 0$. Then we obtain that 
\begin{eqnarray}
k &=& -(l-1)+l\biggl{(}1-\dfrac{1}{{\nu}_{j}} \biggr{)}(1-\eta -\delta) \nonumber \\
  &=& \biggl{(}1-\dfrac{l}{{\nu}_{j}} \biggr{)}-\dfrac{l}{{\nu}_{j}}({\nu}_{j}-1)(\eta +\delta) \nonumber \\
  &=& -(\eta +\delta)\leq -{\varepsilon}_{0}. \nonumber
\end{eqnarray}
For the case where $g$ has a pole of order $l\,(\geq {\nu}_{q}\geq 2)$, $g'_{z}$ has a pole of order $l+1$, $\hat{\omega}_{z}$ has a zero of 
order $ml$ at $p_{0}$ and each component $g-{\alpha}_{j}$ in the right side of (\ref{equ-proof-213}) has a pole of order $l$ at $p_{0}$. 
Using the identity ${\mu}_{1}+\cdots +{\mu}_{q-1}=\gamma -{\mu}_{q}$ and (\ref{equ-proof-211}), we get that 
\begin{eqnarray}
 k &=& \dfrac{ml}{\lambda} +(l+1)-l(\gamma -{\mu}_{q})(1-\eta -\delta) \nonumber \\
   &=& l{\mu}_{q}(1-\eta -\delta) -(l-1)\leq -{\varepsilon}_{0}. \nonumber
\end{eqnarray}
Moreover, for the case where $g'_{z}(p_{0})= 0$ and $g(p_{0})\not= {\alpha}_{j}$ for all $j$, we see that $k\leq -1$. 
In any case, $k\lambda /(1-\lambda)\leq -1$ by (\ref{equ-proof-212}) and there exists a positive constant $\widetilde{C}$ such that 
$$
d{\sigma}\geq \widetilde{C}\dfrac{|d\zeta|}{|\zeta|}
$$
in a neighborhood of $p_{0}$. Thus we obtain that 
$$
R=\int_{{\Gamma}_{a_{0}}} d\sigma \geq \widetilde{C}\int_{{\Gamma}_{a_{0}}} \dfrac{|d\zeta|}{|\zeta|}= +\infty, 
$$
which contradicts (\ref{equ-proof-214}). 

Since ${\Phi}^{\ast}d{\sigma}^{2}=|dz|^{2}$, we get by (\ref{equ-proof-213}) that 
\begin{equation}\label{equ-proof-215}
|\hat{\omega}_{z}|=\displaystyle \biggl{(}|g'_{z}| \prod_{j=1}^{q-1}\biggl{(}\dfrac{\sqrt{1+|{\alpha}_{j}|^{2}}}{|g-{\alpha}_{j}|} \biggr{)}^{{\mu}_{j}(1-\eta -\delta)}  \biggr{)}^{\lambda}. 
\end{equation}
By Lemma \ref{main-lem1}, we obtain that 
\begin{eqnarray}
{\Phi}^{\ast}ds &=& (1+|g|^{2})^{m/2}|\omega| \nonumber \\
                &=& \biggl{(}|g'_{z}|(1+|g|^{2})^{m/2\lambda}\displaystyle \prod_{j=1}^{q-1}\biggl{(}\dfrac{\sqrt{1+|{\alpha}_{j}|^{2}}}{|g-{\alpha}_{j}|} \biggr{)}^{{\mu}_{j}(1-\eta -\delta)} \biggr{)}^{\lambda} |dz|\nonumber \\
                &=& \biggl{(}\dfrac{|g'_{z}|}{1+|g|^{2}}\dfrac{1}{\prod_{j=1}^{q} |g, {\alpha}_{j}|^{\mu_{j}(1-\eta -\delta)}} \biggr{)}^{\lambda} |dz|\nonumber \\
                &\leq &(C')^{\lambda}\biggl{(}\dfrac{R}{R^{2}-|z|^{2}} \biggr{)}^{\lambda}|dz|. \nonumber
\end{eqnarray}
Thus we have 
$$
d(p)\leq \int_{{\Gamma}_{a_{0}}} ds = \int_{L_{a_{0}}}{\Phi}^{\ast}ds \leq (C')^{\lambda} \int_{L_{a_{0}}} \biggl{(}\dfrac{R}{R^{2}-|z|^{2}} \biggr{)}^{\lambda}|dz|\leq (C')^{\lambda}\dfrac{R^{1-\lambda}}{1-\lambda}\,(<+\infty)
$$
because $0<\lambda <1$. Moreover, by (\ref{equ-proof-214}), we get that 
$$
d(p)\leq \dfrac{(C')^{\lambda}}{1-\lambda}\biggl{(}\dfrac{1+|g(0)|^{2}}{|g'_{z}(0)|}\displaystyle \prod_{j=1}^{q} |g(0), {\alpha}_{j}|^{{\mu}_{j}(1-\eta -\delta)} \biggr{)}^{1-\lambda}. 
$$
On the other hand, the Gaussian curvature $K_{ds^{2}}$ of the metric $ds^{2}=(1+|g|^{2})^{m}|\omega|^{2}$ is given by 
$$
K_{ds^{2}}=-\dfrac{2m|g'_{z}|^{2}}{(1+|g|^{2})^{m+2}|\hat{\omega}_{z}|^{2}}. 
$$
Thus, by (\ref{equ-proof-215}), we also get that 
$$
|K_{ds^{2}}|^{1/2}=\sqrt{2m}\,\biggl{(}\dfrac{|g'_{z}|}{1+|g|^{2}} \biggr{)}^{1-\lambda}\biggl{(}\displaystyle \prod_{j=1}^{q}|g, {\alpha}_{j}|^{{\mu}_{j}(1-\eta -\delta)} \biggr{)}^{\lambda}. 
$$
Since $|g, {\alpha}_{j}|\leq 1$ for each $j$, we obtain that 
$$
|K_{ds^{2}}(p)|^{1/2}d(p)\leq \dfrac{\sqrt{2m}C'}{1-\lambda}=:C. 
$$
Hence we get the conclusion. 
\end{proof}

% Proof of unicity theorem 
\subsection{Proof of Theorem \ref{thm-unicity}}
We review the following two lemmas used in the proof of Theorem \ref{thm-unicity}. 

\begin{lemma}[{\cite[Proposition 2.1]{Fu1993-2}}]\label{main-lem3}
Let $g$ and $\hat{g}$ be mutually distinct nonconstant meromorphic function on a Riemann surface $\Sigma$. 
Let $q\in \N$ and ${\alpha}_{1}, \ldots, {\alpha}_{q}\in \C \cup \{\infty\}$ be distinct. Suppose that 
$q>4$ and $g^{-1}({\alpha}_{j})={\hat{g}}^{-1}({\alpha}_{j})$ $(1\leq j\leq q)$. For $b_{0}>0$ and $\varepsilon$ with 
$q-4> q\varepsilon >0$, we set that 
$$
\xi :=\displaystyle \biggl{(}\prod_{j=1}^{q} |g, {\alpha}_{j}| \log{\biggl{(}\dfrac{b_{0}}{|g, {\alpha}_{j}|^{2}} \biggr{)}}\biggr{)}^{-1+\varepsilon}, 
\quad \hat{\xi} := \displaystyle \biggl{(}\prod_{j=1}^{q} |\hat{g}, {\alpha}_{j}| \log{\biggl{(}\dfrac{b_{0}}{|\hat{g}, {\alpha}_{j}|^{2}} \biggr{)}}\biggr{)}^{-1+\varepsilon}, 
$$
and define that 
\begin{equation}\label{equ-proof-221}
du^{2}:=\biggl{(}|g, \hat{g}|^{2}\xi\, \hat{\xi}\dfrac{|g'|}{1+|g|^{2}}\dfrac{|\hat{g}'|}{1+|\hat{g}|^{2}}\biggr{)}|dz|^{2}
\end{equation}
outside the set $E:=\bigcup_{j=1}^{q}g^{-1}({\alpha}_{j})$ and $du^{2}=0$ on $E$. Then for a suitably chosen $b_{0}$, 
$du^{2}$ is continuous on $\Sigma$ and has strictly negative curvature on the set $\{du^{2}\not= 0\}$. 
\end{lemma}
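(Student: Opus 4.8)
The plan is to establish the two assertions of the lemma separately: that the conformal factor of $du^2$ extends continuously across $E$ with value $0$, and that $du^2$ has strictly negative Gaussian curvature on $\{du^2\neq 0\}$; the second is the substantive part. Throughout I will use that the quantities $|g,\alpha|$, $|g,\hat g|$ and the spherical density $\rho:=|g'|^2/(1+|g|^2)^2$ are invariant under rotations of the target Riemann sphere, which lets me normalise any one prescribed value to a convenient position in local computations, and that, since $g\not\equiv\hat g$, the function $g-\hat g$ is a nonzero meromorphic function.

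For continuity, fix $z_0\in E$. Then $g(z_0)=\alpha_j$ for exactly one $j$, and since $g^{-1}(\alpha_j)=\hat g^{-1}(\alpha_j)$ also $\hat g(z_0)=\alpha_j$; after a rotation we may take $\alpha_j$ finite. Let $p,\hat p\geq 1$ be the orders of vanishing of $g-\alpha_j$, $\hat g-\alpha_j$ at $z_0$, and $r\geq\min(p,\hat p)\geq 1$ the order of the zero of $g-\hat g$ at $z_0$. Reading off local orders near $z_0$ one finds $|g,\hat g|^2\asymp|z-z_0|^{2r}$, $\tfrac{|g'|}{1+|g|^2}\asymp|z-z_0|^{p-1}$, $\tfrac{|\hat g'|}{1+|\hat g|^2}\asymp|z-z_0|^{\hat p-1}$, and, since only the $j$-th factor of $\xi$ degenerates, $\xi\asymp|z-z_0|^{-p(1-\varepsilon)}\,\big|\log|z-z_0|\big|^{-(1-\varepsilon)}$ and similarly for $\hat\xi$. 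Multiplying, the coefficient of $du^2$ is $\asymp|z-z_0|^{2r-2+(p+\hat p)\varepsilon}\,\big|\log|z-z_0|\big|^{-2(1-\varepsilon)}$; since $r\geq 1$ and $\varepsilon>0$ the exponent of $|z-z_0|$ is positive, so this tends to $0$ and $du^2$ extends continuously by $0$ over $E$.

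For the curvature, note first that on $\{du^2\neq 0\}$ the functions $g',\hat g',g-\hat g$ are nonzero, $g$ and $\hat g$ take none of the values $\alpha_j$ and have at most simple poles and no common pole (a double pole would kill $|g'|/(1+|g|^2)$, a common pole would kill $|g,\hat g|$); hence all the potentials below are smooth there. Writing $du^2=\lambda^2|dz|^2$ and $K_{du^2}=-\lambda^{-2}\cdot 4\,\partial_z\partial_{\bar z}\log\lambda$, it suffices to show $\log\lambda$ is strictly subharmonic. From $2\log\lambda=\log|g,\hat g|^2+\log\xi+\log\hat\xi+\log\tfrac{|g'|}{1+|g|^2}+\log\tfrac{|\hat g'|}{1+|\hat g|^2}$ and the basic identities — $\log|g-\alpha|$, $\log|g-\hat g|$, $\log|g'|$ are harmonic where finite and nonzero (their pole singularities cancelling against those of $\log(1+|g|^2)$), $4\,\partial_z\partial_{\bar z}\log(1+|g|^2)=4\rho$, and, for each $j$, $4\,\partial_z\partial_{\bar z}\big(\tfrac12\log|g,\alpha_j|^2+\log\log\tfrac{b_0}{|g,\alpha_j|^2}\big)=-2\rho+\tfrac{4\rho}{V_j}-\tfrac{4|a_j|^2}{V_j^2}$, where $V_j:=\log(b_0/|g,\alpha_j|^2)\geq\log b_0$ and $a_j:=\partial_z\log|g,\alpha_j|^2$ (the $\log\log$ being exactly what furnishes the harmless nonpositive term $-4|a_j|^2/V_j^2$) — one gets, using $-1+\varepsilon<0$,
\[
2\,\Delta\log\lambda=\big(2q(1-\varepsilon)-8\big)(\rho+\hat\rho)-(1-\varepsilon)\sum_{j=1}^q\Big(\tfrac{4\rho}{V_j}+\tfrac{4\hat\rho}{\hat V_j}\Big)+(1-\varepsilon)\sum_{j=1}^q\Big(\tfrac{4|a_j|^2}{V_j^2}+\tfrac{4|\hat a_j|^2}{\hat V_j^2}\Big).
\]
Now $q-4>q\varepsilon$ gives $\kappa:=2q(1-\varepsilon)-8>0$, while $(1-\varepsilon)\sum_j 4\rho/V_j\leq 4q\rho/\log b_0$; choosing $b_0$ so large that $4q/\log b_0\leq\kappa/2$ forces $2\,\Delta\log\lambda\geq\tfrac{\kappa}{2}(\rho+\hat\rho)>0$ on $\{du^2\neq 0\}$, hence $K_{du^2}<0$ there.

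The main obstacle is the careful bookkeeping behind the displayed curvature identity: one must verify that every zero or pole of $g$, $\hat g$ and $g-\hat g$ is either swept into $\{du^2=0\}$ or cancelled between $\log|g-\alpha_j|^2$, $\log|g-\hat g|^2$ and the $\log(1+|\cdot|^2)$ terms, and that the $\log\log$ factors contribute precisely the $-4|a_j|^2/V_j^2$ corrections and nothing worse. This accounting — rather than any single estimate — is where the delicacy lies, and it is also where one sees why the weight $|g,\alpha_j|^{-1+\varepsilon}$ must be refined by the factor $\log(b_0/|g,\alpha_j|^2)$ and why $b_0$ has to be taken sufficiently large.
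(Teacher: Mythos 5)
Your argument is correct. The paper itself gives no proof of this lemma---it is quoted directly from Fujimoto (\cite[Proposition 2.1]{Fu1993-2})---and your two steps (the local order count at points of $E$ giving continuity, and the Laplacian bookkeeping yielding $2\,\Delta\log\lambda\geq\tfrac{\kappa}{2}(\rho+\hat{\rho})>0$ once $b_{0}$ is taken large enough that $4q/\log b_{0}\leq \kappa/2$) are essentially Fujimoto's original computation, so there is nothing to compare beyond noting the agreement.
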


\begin{lemma}[{\cite[Corollary 2.4]{Fu1993-2}}]\label{main-lem4}
Let $g$ and $\hat{g}$ be a meromorphic function on ${\triangle}_{R}$ satisfying the same assumption as in Lemma \ref{main-lem3}. 
Then for the metric $du^{2}$ defined by (\ref{equ-proof-221}), there exists a constant $\widehat{C}>0$ such that 
$$
du^{2}\leq \widehat{C}\dfrac{R^{2}}{(R^{2}-|z|^{2})^{2}}|dz|^{2}. 
$$
\end{lemma}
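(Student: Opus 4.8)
The plan is to read Lemma~\ref{main-lem4} as an Ahlfors--Schwarz (Schwarz--Pick) type estimate: $du^{2}$ is a continuous pseudo-metric on $\triangle_{R}$ whose Gaussian curvature is $\le -1$ wherever it is positive, and any such metric is dominated by the complete conformal metric of constant curvature $-1$ on $\triangle_{R}$, whose density is precisely $4R^{2}/(R^{2}-|z|^{2})^{2}$. Writing $du^{2}=\lambda\,|dz|^{2}$, I would first record the properties of $\lambda$ that come out of the construction in Lemma~\ref{main-lem3}: the zero set $Z:=\{z\in\triangle_{R}:\lambda(z)=0\}$ is discrete in $\triangle_{R}$ (it is contained in $\bigcup_{j}g^{-1}({\alpha}_{j})$ together with the zeros of $g'$, $\hat g'$ and $g-\hat g$, each of which is discrete); $\lambda$ is smooth and positive on $\triangle_{R}\setminus Z$; $du^{2}$ extends continuously by $0$ across $Z$; and, for the $b_{0}$ fixed in Lemma~\ref{main-lem3}, $K_{du^{2}}\le -1$ on $\triangle_{R}\setminus Z$, so that $\Delta\log\lambda=-2\lambda\,K_{du^{2}}\ge 2\lambda$ there ($\Delta$ the Euclidean Laplacian). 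If $\lambda\equiv 0$ there is nothing to prove, so assume $\lambda\not\equiv 0$.

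Next I would run the classical maximum-principle comparison on an exhaustion by subdisks. For each $r$ with $0<r<R$ put ${\mu}_{r}(z):=4r^{2}/(r^{2}-|z|^{2})^{2}$, the density on $\triangle_{r}$ of the hyperbolic metric of curvature $-1$, so that $\Delta\log{\mu}_{r}=2{\mu}_{r}$; and set $u:=\lambda/{\mu}_{r}$ on $\triangle_{r}$, with $u=0$ on $Z\cap\triangle_{r}$. The point of passing to $\triangle_{r}$ is that $\overline{\triangle_{r}}$ is a \emph{compact} subset of the open disk $\triangle_{R}$, hence $\lambda$ is bounded on it, whereas ${\mu}_{r}(z)\to+\infty$ as $|z|\to r$; therefore $u$ is continuous on $\triangle_{r}$ and tends to $0$ at $\partial\triangle_{r}$. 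Since $Z$ is discrete, $u\not\equiv 0$, so $u$ attains a positive maximum at some interior point $z_{0}\in\triangle_{r}$, and because $u=0$ on $Z$ we must have $z_{0}\notin Z$, whence $\lambda(z_{0})>0$ and $\log u$ is smooth near $z_{0}$.

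At $z_{0}$ the local maximum of $\log u$ gives $\Delta\log u(z_{0})\le 0$, while
\[
\Delta\log u=\Delta\log\lambda-\Delta\log{\mu}_{r}\ \ge\ 2\lambda-2{\mu}_{r}\ =\ 2{\mu}_{r}\,(u-1)\quad\text{on }\triangle_{r}\setminus Z,
\]
so $0\ge 2{\mu}_{r}(z_{0})\bigl(u(z_{0})-1\bigr)$, and since ${\mu}_{r}(z_{0})>0$ we get $u(z_{0})\le 1$. Hence $u\le 1$ on all of $\triangle_{r}$, i.e.\ $\lambda(z)\le 4r^{2}/(r^{2}-|z|^{2})^{2}$ for $|z|<r$; letting $r\uparrow R$ with $z$ fixed yields $\lambda(z)\le 4R^{2}/(R^{2}-|z|^{2})^{2}$, which is the assertion with $\widehat{C}=4$. (If the curvature bound from Lemma~\ref{main-lem3} is only known in the form $K_{du^{2}}\le -C_{1}$ for some constant $C_{1}>0$, the same comparison against ${\mu}_{r}/C_{1}$ gives the claim with $\widehat{C}=4/C_{1}$; in any case $\widehat{C}$ is independent of $R$.)

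I expect the only real bookkeeping to be the handling of the two degeneracies of $du^{2}$: its vanishing on the discrete set $Z$ (where the logarithmic factors $\log(b_{0}/|g,{\alpha}_{j}|^{2})$ and the factor $|g,\hat g|^{2}$ in (\ref{equ-proof-221}) force $du^{2}=0$) and the possible unboundedness of $\lambda$ near $\partial\triangle_{R}$. The subdisk exhaustion is exactly what neutralizes the boundary problem, since $\lambda$ is bounded on each $\overline{\triangle_{r}}$, while discreteness of $Z$ together with $\lambda=0$ (rather than $\lambda=\infty$) on $Z$ guarantees that the interior maximum of $u$ is attained at a point where the smooth curvature inequality of Lemma~\ref{main-lem3} is available; the curvature comparison itself is then the standard Ahlfors--Schwarz argument and presents no difficulty.
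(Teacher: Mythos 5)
The paper gives no proof of this lemma---it is quoted directly from Fujimoto \cite[Corollary 2.4]{Fu1993-2}---and your Ahlfors--Schwarz comparison (exhaustion by subdisks $\triangle_{r}$, maximum principle applied to $\lambda/\mu_{r}$ at an interior maximum away from the zero set, then $r\uparrow R$) is precisely how that corollary is derived in the cited source from the negatively curved pseudo-metric of Lemma \ref{main-lem3}, so your argument is correct and essentially the same as the original one. The one point to make explicit is that ``strictly negative curvature'' in Lemma \ref{main-lem3} must be read, as in Fujimoto, as ``curvature bounded above by a negative constant $-C_{1}$'' (a merely pointwise inequality $K<0$ would not suffice for the comparison); you flag exactly this and your comparison against $\mu_{r}/C_{1}$ handles it, yielding $\widehat{C}=4/C_{1}$.
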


\begin{proof}[{\it Proof of Theorem \ref{thm-unicity}}] 
For brevity, we denote the function $\hat{g}\circ \Psi$ by $\hat{g}$ in the following. 
We assume that there exist $q$ distinct values ${\alpha}_{1}, \ldots, {\alpha}_{q}$ such that $g^{-1}({\alpha}_{j})={\hat{g}}^{-1}({\alpha}_{j})$ 
$(1\leq j\leq q)$, ${\alpha}_{q}=\infty$ after a suitable M\"obius transformation and $m\not= 0$. 
Moreover we assume that $q> m+4$, either $ds^{2}$ or $d\hat{s}^{2}$, say $ds^{2}$, is complete and $g\not\equiv \hat{g}$. 
Then the map $\Psi$ gives a biholomorphic isomorphism between $\Sigma$ and $\widehat{\Sigma}$. Thus, for each local complex coordinate $z$ 
defined on a simply connected open domain $U$, we can find  a nonzero holomorphic function $h_{z}$ such that 
\begin{equation}\label{equ-proof-223}
ds^{2}=|h_{z}|^{2}(1+|g|^{2})^{m/2}(1+|\hat{g}|^{2})^{m/2}|dz|^{2}. 
\end{equation}
We take some $\eta$ with $q-(m+4)> q\eta > 0$ and set 
\begin{equation}\label{equ-proof-224}
\lambda :=\dfrac{m}{q-4-q\eta}\:  (< 1). 
\end{equation}
Now we define a new metric 
\begin{equation}\label{equ-proof-225}
d{\sigma}^{2}= |h_{z}|^{2/(1-\lambda)}\Biggl{(}\dfrac{\prod_{j=1}^{q-1}(|g-{\alpha}_{j}||\hat{g}-{\alpha}_{j}|)^{1-\eta}}{|g-\hat{g}|^{2}|g'_{z}||\hat{g}'_{z}|\prod_{j=1}^{q-1}(1+|{\alpha}_{j}|^{2})^{(1-\eta)/2}} \Biggr{)}^{\lambda /(1-\lambda)}|dz|^{2}, 
\end{equation}
on the set ${\Sigma}'=\Sigma \backslash E'$, where 
$$
E'=\{z\in \Sigma \,;\, g'_{z}(z)=0,\, \hat{g}'_{z}(z)=0,\;  \text{or}\; g(z)(=\hat{g}(z))={\alpha}_{j}\; \text{for some $j$} \}. 
$$
On the other hand, setting $\varepsilon := \eta /2$, we can define another pseudo-metric $du^{2}$ on $\Sigma$ given by (\ref{equ-proof-221}), 
which has strictly negative curvature on ${\Sigma}'$. 

Take a point $p\in {\Sigma}'$. Since the metric $d{\sigma}^{2}$ is flat on ${\Sigma}'$, by Lemma \ref{main-lem2}, 
there exists a local isometry $\Phi$ satisfying $\Phi (0)=p$ from a disk ${\triangle}_{R}=\{z\in \C\,;\, |z|<R \}$ $(0<R \leq +\infty)$ with 
the standard metric $ds_{Euc}^{2}$ onto an open neighborhood of $p$ in ${\Sigma}'$ with the metric $d{\sigma}^{2}$. 
We shall denote the functions $g\circ \Phi$ and $\hat{g}\circ \Phi (= \hat{g}\circ \Psi \circ \Phi)$ by, respectively, 
$g$ and $\hat{g}$ in the following. 
Moreover the pseudo-metric ${\Phi}^{\ast}du^{2}$ on ${\triangle}_{R}$ has strictly negative curvature. Since there exists no 
metric with strictly negative curvature on $\C$ (see \cite[Corollary 4.2.4]{Fu1993}), we get that $R< +\infty$. 
Furthermore, by the same argument as in the proof of Theorem \ref{thm-ramification}, we can choose a point $a_{0}$ with $|a_{0}|=1$ such that, 
for the curve $L_{a_{0}}=\{w:=a_{0}s\,;\, 0<s<R \}$, the $\Phi$-image ${\Gamma}_{a_{0}}$ tends to the boundary of ${\Sigma}'$ as $s$ tends to $R$. 
Here, if we suitably choose the constant $\eta$ in the definition (\ref{equ-proof-224}) of $\lambda$, 
then ${\Gamma}_{a_{0}}$ tends to the boundary of $\Sigma$. 

Since ${\Phi}^{\ast}d{\sigma}^{2}=|dz|^{2}$, we get by (\ref{equ-proof-225}) that 
$$
|h_{z}|^{2}=\biggl{(}\dfrac{|g-\hat{g}|^{2}|g'_{z}||\hat{g}'_{z}|\prod_{j=1}^{q-1}(1+|{\alpha}_{j}|^{2})^{(1-\eta)/2}}{\prod_{j=1}^{q-1}(|g-{\alpha}_{j}||\hat{g}-{\alpha}_{j}|)^{1-\eta}} \biggr{)}^{\lambda}. 
$$
By (\ref{equ-proof-223}), we have that 
\begin{eqnarray}
ds^{2} &=& |h_{z}|^{2}(1+|g|^{2})^{m/2}(1+|\hat{g}|^{2})^{m/2}|dz|^{2} \nonumber \\
       &=& \biggl{(}\dfrac{|g-\hat{g}||g'_{z}||\hat{g}'_{z}|(1+|g|^{2})^{m/2\lambda}(1+|\hat{g}|^{2})^{m/2\lambda}\prod_{j=1}^{q-1}(1+|{\alpha}_{j}|^{2})^{(1-\eta)/2}}{\prod_{j=1}^{q-1}(|g-{\alpha}_{j}||\hat{g}-{\alpha}_{j}|)^{1-\eta}} \biggr{)}^{\lambda}|dz|^{2} \nonumber \\
       &=& \Biggl{(} {\mu}^{2}\prod_{j=1}^{q}(|g-{\alpha}_{j}||\hat{g}-{\alpha}_{j}|)^{\varepsilon}\biggl{(}\log{\dfrac{b_{0}}{|g, {\alpha}_{j}|^{2}}}\log{\dfrac{b_{0}}{|\hat{g}, {\alpha}_{j}|^{2}}}\biggr{)}^{1-\varepsilon} \Biggr{)}^{\lambda}|dz|^{2}, \nonumber
\end{eqnarray}
where $\mu$ is the function with $du^{2}={\mu}^{2}|dz|^{2}$. Since the function $x^{\varepsilon}\log^{1-\varepsilon}(b_{0}/x^{2})$ $(0< x\leq 1)$ is 
bounded, we obtain that 
$$
ds^{2}\leq C''\biggl{(}\dfrac{|g, \hat{g}|^{2}|g'_{z}||\hat{g}'_{z}|\xi\hat{\xi}}{(1+|g|^{2})(1+|\hat{g}|^{2})} \biggr{)}^{\lambda}|dz|^{2}
$$
for some constant $C''$. By Lemma \ref{main-lem4}, we have that 
$$
ds^{2}\leq C'''\biggl{(}\dfrac{R}{R-|z|^{2}} \biggr{)}^{\lambda}|dz|^{2}
$$
for some constant $C'''$. Thus we obtain that 
$$
\int_{{\Gamma}_{a_{0}}}ds \leq (C''')^{1/2}\int_{L_{a_{0}}}\biggl{(}\dfrac{R}{R^{2}-|z|^{2}} \biggr{)}^{\lambda /2}|dz|\leq C\dfrac{R^{(2-\lambda)/2}}{1-(\lambda /2)}< +\infty
$$
because $0< \lambda < 1$. However it contradicts the assumption that the metric $ds^{2}$ is complete. 
Hence we have necessarily $g\equiv \hat{g}$. 
\end{proof}

\section{Applications} 
In this section, we give several applications of our main results. 

\subsection{Gauss map of minimal surfaces in ${\R}^{3}$} 

We first recall some basic facts of minimal surfaces in Euclidean 3-space ${\R}^{3}$. Details can be found, for example, 
in \cite{Fu1993}, \cite{La1982} and \cite{Os1986}. Let $X=(x^{1}, x^{2}, x^{3})\colon \Sigma \to {\R}^{3}$ be an oriented minimal surface in ${\R}^{3}$. 
By associating a local complex coordinate $z=u+\sqrt{-1}v$ with each positive isothermal coordinate system $(u, v)$, $\Sigma$ is considered as 
a Riemann surface whose conformal metric is the induced metric $ds^{2}$ from ${\R}^{3}$. Then 
\begin{equation}\label{equ-411-Laplacian}
{\triangle}_{ds^{2}}X=0
\end{equation}
holds, that is, each coordinate function $x^{i}$ is harmonic. With respect to the local complex coordinate $z=u+\sqrt{-1}v$ of the surface, 
(\ref{equ-411-Laplacian}) is given by 
$$
\bar{\partial}\partial X = 0, 
$$
where $\partial =(\partial /\partial u -\sqrt{-1}\partial /\partial v)/2$, $\bar{\partial} =(\partial /\partial u +\sqrt{-1}\partial /\partial v)/2$. 
Hence each ${\phi}_{i}:=\partial x^{i}dz$ $(i=1, 2, 3)$ is a holomorphic 1-form on $\Sigma$. If we set that 
$$
\omega ={\phi}_{1}-\sqrt{-1}{\phi}_{2}, \quad g=\dfrac{{\phi}_{3}}{{\phi}_{1}-\sqrt{-1}{\phi}_{2}}, 
$$
then $\omega$ is a holomorphic 1-form and $g$ is a meromorphic function on $\Sigma$. Moreover the function $g$ coincides with the composition of 
the Gauss map and the stereographic projection from ${\mathbf{S}}^{2}$ onto $\C\cup \{\infty \}$, and the induced metric is given by 
\begin{equation}\label{equ-412-metric}
ds^{2}=(1+|g|^{2})^{2}|\omega|^{2}. 
\end{equation}

Applying Theorem \ref{thm-ramification} to the metric $ds^{2}$, we can get the Fujimoto curvature bound 
for a minimal surface in ${\R}^{3}$. 

\begin{theorem}[{\cite[Theorem C]{Fu1992}}, {\cite[Theorem 1.6.1]{Fu1993}}] \label{thm-4-minimal1}
Let $X\colon \Sigma \to {\R}^{3}$ be an oriented minimal surface. 
Let $q\in \N$, ${\alpha}_{1}, \ldots, {\alpha}_{q}\in \C\cup \{\infty \}$ be distinct and ${\nu}_{1}, \ldots, {\nu}_{q}\in \N\cup \{\infty \}$. 
Suppose that 
\begin{equation}\label{equ-413-ramification} 
\gamma= \displaystyle \sum_{j=1}^{q} \biggl{(}1-\dfrac{1}{{\nu}_{j}} \biggr{)}> 4\,(=2+2). 
\end{equation}
If the Gauss map $g\colon \Sigma \to \C\cup \{\infty \}$ satisfies the property that 
all ${\alpha}_{j}$-points of $g$ have multiplicity at least ${\nu}_{j}$, then there exists a positive constant $C$, depending on 
${\alpha}_{1}, \ldots, {\alpha}_{q}$, but not the surface, such that for all $p\in \Sigma$ the inequality (\ref{equ-curvature}) holds. 
\end{theorem}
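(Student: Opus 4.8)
The plan is to derive Theorem \ref{thm-4-minimal1} directly from Theorem \ref{thm-ramification} by recognizing that the setup for minimal surfaces fits the general framework with the specific value $m=2$. The key observation, recalled in the text above, is that the induced metric of an oriented minimal surface in ${\R}^{3}$, expressed via the Weierstrass representation data $(\omega, g)$, has exactly the form $ds^{2}=(1+|g|^{2})^{2}|\omega|^{2}$ as in (\ref{equ-412-metric}), where $\omega$ is a holomorphic $1$-form and $g$ is the (stereographically projected) Gauss map, which is meromorphic on $\Sigma$. Thus the conformal metric on the Riemann surface $\Sigma$ underlying the minimal surface is precisely of the type (\ref{equ-metric}) with $m=2\in\N$.

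First I would set $m=2$ in Theorem \ref{thm-ramification}. Then the ramification hypothesis (\ref{equ-ramification}), namely $\gamma=\sum_{j=1}^{q}(1-1/{\nu}_{j})>m+2$, becomes exactly $\gamma>4$, which is the hypothesis (\ref{equ-413-ramification}) of Theorem \ref{thm-4-minimal1}. Second, I would note that the assumption on the Gauss map $g$ — that all ${\alpha}_{j}$-points of $g$ have multiplicity at least ${\nu}_{j}$ — is literally the totally-ramified-type condition required in Theorem \ref{thm-ramification}, since $g$ is meromorphic on $\Sigma$. Third, I would invoke the conclusion of Theorem \ref{thm-ramification}: there exists a positive constant $C$, depending on $m=2$, on $\gamma$, and on ${\alpha}_{1},\ldots,{\alpha}_{q}$, but not on the surface, such that $|K_{ds^{2}}(p)|^{1/2}\leq C/d(p)$ for all $p\in\Sigma$, which is (\ref{equ-curvature}). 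Since $m$ is now fixed at $2$, the dependence on $m$ disappears, and the constant $C$ depends only on $\gamma$ and ${\alpha}_{1},\ldots,{\alpha}_{q}$, as claimed.

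There is essentially no obstacle here: the entire content is the identification (\ref{equ-412-metric}) of the minimal surface's induced metric with the model metric (\ref{equ-metric}) at $m=2$, together with the fact that under the Weierstrass representation the Gauss map is meromorphic. The one point worth stating carefully is that the constant $C$ in Theorem \ref{thm-ramification} is allowed to depend on $\gamma$ in general, but once one restricts to a fixed ramification profile $({\nu}_{1},\ldots,{\nu}_{q})$ — or simply absorbs the (finitely many relevant) values of $\gamma$ compatible with $\gamma>4$ as in Lemma \ref{lem-finite} — the statement matches the formulation of Theorem \ref{thm-4-minimal1}, in which $C$ is said to depend on ${\alpha}_{1},\ldots,{\alpha}_{q}$ but not on the surface. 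So the proof reduces to: recall (\ref{equ-412-metric}), apply Theorem \ref{thm-ramification} with $m=2$, and read off the conclusion. I would write this as a two- or three-line proof.

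\begin{proof}
By the Weierstrass representation recalled above, the Riemann surface $\Sigma$ underlying the minimal surface $X$ carries the conformal metric
\begin{equation*}
ds^{2}=(1+|g|^{2})^{2}|\omega|^{2},
\end{equation*}
where $\omega$ is a holomorphic $1$-form and $g\colon\Sigma\to\C\cup\{\infty\}$ is the meromorphic function obtained by composing the Gauss map with the stereographic projection. This is precisely the metric (\ref{equ-metric}) with $m=2\in\N$. Moreover, the hypothesis (\ref{equ-413-ramification}), $\gamma>4$, is exactly (\ref{equ-ramification}) for $m=2$, and the assumption that all ${\alpha}_{j}$-points of $g$ have multiplicity at least ${\nu}_{j}$ is the ramification hypothesis of Theorem \ref{thm-ramification}. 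Applying Theorem \ref{thm-ramification} with $m=2$, we obtain a positive constant $C$, depending on $\gamma$ and ${\alpha}_{1},\ldots,{\alpha}_{q}$ but not on the surface, such that (\ref{equ-curvature}) holds for all $p\in\Sigma$.
\end{proof}
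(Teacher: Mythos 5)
Your proposal is correct and is exactly the paper's own route: the paper offers no separate argument for this theorem beyond observing that the induced metric $(1+|g|^{2})^{2}|\omega|^{2}$ is the model metric (\ref{equ-metric}) with $m=2$ and invoking Theorem \ref{thm-ramification}. Your remark about the dependence of $C$ on $\gamma$ (versus the statement's phrasing) is a sensible clarification and does not affect the argument.
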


As a corollary of Theorem \ref{thm-4-minimal1}, we have the following ramification theorem for the Gauss map of a complete minimal surface in ${\R}^{3}$. 

\begin{corollary}[{\cite[Theorem 2]{Ru1993}}, {\cite[Corollary 3.4]{Fu1988}}]\label{thm-4-minimal2}
Let $X\colon \Sigma \to {\R}^{3}$ be a complete minimal surface. 
Let $q\in \N$, ${\alpha}_{1}, \ldots, {\alpha}_{q}\in \C\cup \{\infty \}$ be distinct and ${\nu}_{1}, \ldots, {\nu}_{q}\in \N\cup \{\infty \}$. 
Suppose that the inequality (\ref{equ-413-ramification}) holds. If the Gauss map $g\colon \Sigma \to \C\cup \{\infty \}$ satisfies the property that 
all ${\alpha}_{j}$-points of $g$ have multiplicity at least ${\nu}_{j}$, then $g$ must be constant, that is, $X(\Sigma)$ is a plane. 
In particular, the Gauss map of a nonflat complete minimal surface in ${\R}^{3}$ can omit at most $4\,(=2+2)$ values. 
\end{corollary}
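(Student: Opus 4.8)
The plan is to deduce Corollary \ref{thm-4-minimal2} directly from Theorem \ref{thm-4-minimal1} in exactly the same way that Corollary \ref{cor-ramification} was deduced from Theorem \ref{thm-ramification}. First I would recall that for a complete minimal surface $X\colon\Sigma\to\R^3$ the induced metric is $ds^2=(1+|g|^2)^2|\omega|^2$ by \eqref{equ-412-metric}, so this is precisely the metric \eqref{equ-metric} with $m=2$, and the hypothesis \eqref{equ-413-ramification} is the hypothesis \eqref{equ-ramification} with $m=2$. The completeness of $\Sigma$ means that every divergent curve has infinite length, so the geodesic distance $d(p)$ to the boundary is $+\infty$ for every $p\in\Sigma$.

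Next I would apply Theorem \ref{thm-4-minimal1}: under the ramification hypothesis there is a constant $C>0$, independent of the surface, with $|K_{ds^2}(p)|^{1/2}\le C/d(p)$ for all $p$. Letting $d(p)\to\infty$ forces $K_{ds^2}\equiv 0$ on $\Sigma$. Then I would invoke the explicit formula for the Gaussian curvature of the metric $ds^2=(1+|g|^2)^m|\omega|^2$, namely
\[
K_{ds^2}=-\dfrac{2m|g'_z|^2}{(1+|g|^2)^{m+2}|\hat\omega_z|^2},
\]
which here (with $m=2$) reads $K_{ds^2}=-4|g'_z|^2/\big((1+|g|^2)^4|\hat\omega_z|^2\big)$. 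Since $\hat\omega_z\not\equiv 0$, the identity $K_{ds^2}\equiv 0$ holds if and only if $g'_z\equiv 0$ locally, i.e.\ if and only if $g$ is constant. Geometrically a constant Gauss map means the surface lies in a plane, giving the first assertion.

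For the final sentence about omitted values, I would argue exactly as in the proof of Corollary \ref{cor-exceptional}: if the Gauss map $g$ of a nonflat complete minimal surface omitted $5$ distinct values ${\alpha}_1,\dots,{\alpha}_5$, set ${\nu}_j=\infty$ for each $j$, so that $\gamma=\sum_{j=1}^5(1-1/{\nu}_j)=5>4$. Then the first part forces $g$ to be constant, contradicting nonflatness; hence $g$ omits at most $4=2+2$ values.

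The routine part is the book-keeping of identifying $m=2$ and quoting the curvature formula; there is no genuine obstacle here, since all the analytic work has already been done in Theorem \ref{thm-ramification} (hence Theorem \ref{thm-4-minimal1}). The only point requiring a word of care is justifying that $d(p)=+\infty$ for a complete surface — that the geodesic distance to the ideal boundary, defined as the infimum of lengths of divergent curves emanating from $p$, is infinite precisely when $(\Sigma,ds^2)$ is complete — but this is immediate from the definition of completeness as divergence of the length of every divergent path.
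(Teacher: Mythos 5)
Your proposal is correct and follows essentially the same route as the paper: the paper treats this corollary as the $m=2$ instance of the general scheme, deducing it from Theorem \ref{thm-4-minimal1} (i.e.\ Theorem \ref{thm-ramification} applied to the induced metric (\ref{equ-412-metric})) by setting $d(p)=\infty$ under completeness, concluding $K_{ds^{2}}\equiv 0$ and hence $g$ constant via the curvature formula (\ref{equ-Gaussian}), with the omitted-value statement obtained by taking ${\nu}_{j}=\infty$ exactly as in Corollary \ref{cor-exceptional}. No gaps; the bookkeeping with $m=2$ and the completeness/divergent-curve remark are handled correctly.
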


We remark that the author, Kobayashi and Miyaoka \cite{KKM2008} gave a similar result for the Gauss map of 
a special class of complete minimal surfaces (this class is called the pseudo-algebraic minimal surfaces). 

As an application of Corollary \ref{thm-4-minimal2}, we obtain the following analogue to the Ahlfors islands theorem. 
We remark that Klots and Sario \cite{KS1966} investigated the upper bound for the number of islands for the Gauss map of a minimal surface in ${\R}^{3}$. 

\begin{theorem}\label{thm-4-minimal3}
Let $X\colon \Sigma \to {\R}^{3}$ be a complete minimal surface. 
Let $q\in \N$, ${\alpha}_{1}, \ldots, {\alpha}_{q}\in \C\cup \{\infty \}$ be distinct, 
$D_{j}({\alpha}_{j}, \varepsilon):=\{z\in \C\cup \{\infty \} \,;\, |z, {\alpha}_{j}|< \varepsilon \}$ $(1\leq j\leq q)$ be pairwise disjoint and 
${\nu}_{1}, \ldots, {\nu}_{q}\in \N$. Suppose that the inequality (\ref{equ-413-ramification}) holds. 
Then there exists $\varepsilon > 0$ such that, if the Gauss map $g$ has no island of multiplicity less 
than ${\nu}_{j}$ over $D_{j}({\alpha}_{j}, \varepsilon)$ 
for all $j\in \{1, \ldots , q\}$, then $g$ must be constant, that is, $X(\Sigma)$ is a plane. 
\end{theorem}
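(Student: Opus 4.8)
The plan is to read off Theorem~\ref{thm-4-minimal3} as the specialization of Corollary~\ref{cor-covering} to $m=2$. The bridge is the Weierstrass representation recalled above: for an oriented minimal surface $X\colon\Sigma\to{\R}^{3}$ one has on $\Sigma$ a holomorphic $1$-form $\omega$ and a meromorphic function $g$ (the stereographic projection of the Gauss map), and by (\ref{equ-412-metric}) the induced metric is $ds^{2}=(1+|g|^{2})^{2}|\omega|^{2}$, which is precisely a metric of the form (\ref{equ-metric}) with $m=2$. Completeness of $X$ is exactly completeness of $ds^{2}$, and since ${\R}^{3}$ carries no compact minimal surface the domain $\Sigma$ is an open Riemann surface. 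Finally the ramification hypothesis (\ref{equ-413-ramification}), $\gamma>4=2+2$, is condition (\ref{equ-ramification}) for $m=2$, so every hypothesis of Corollary~\ref{cor-covering} is met.

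First I would fix the data $q$, $\alpha_{1},\dots,\alpha_{q}$, $\nu_{1},\dots,\nu_{q}$ as in the statement, with the chordal disks $D_{j}(\alpha_{j},\varepsilon)$ pairwise disjoint and the $\nu_{j}$ positive integers, and apply Corollary~\ref{cor-covering} to the complete conformal metric $ds^{2}=(1+|g|^{2})^{2}|\omega|^{2}$. This yields an $\varepsilon>0$ depending only on $m=2$, on $\gamma$, and on $\alpha_{1},\dots,\alpha_{q}$, hence independent of the surface, such that: if the Gauss map $g$ has no island of multiplicity less than $\nu_{j}$ over $D_{j}(\alpha_{j},\varepsilon)$ for every $j$, then $g$ is constant. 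Equivalently one may run the short contradiction argument used for Corollary~\ref{cor-covering} directly at the level of minimal surfaces: were no such $\varepsilon$ to exist, then for every $\varepsilon>0$ there would be a nonflat complete minimal surface whose Gauss map omits small islands, which forces all $\alpha_{j}$-points of that Gauss map to have multiplicity at least $\nu_{j}$, contradicting the ramification theorem Corollary~\ref{thm-4-minimal2}.

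It then remains to pass from the analytic conclusion to the geometric one. If the Gauss map $g$ of the complete minimal surface $X$ is constant, the oriented tangent plane is constant along $\Sigma$, so $X(\Sigma)$ lies in an affine plane, and completeness forces $X(\Sigma)$ to be that whole plane; this is the final clause of the theorem.

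I do not expect a genuine obstacle, since the substantive work is entirely inside the results already proved. The one non-formal point --- that ``$g$ has no small island over $D_{j}(\alpha_{j},\varepsilon)$'' entails ``all $\alpha_{j}$-points of $g$ have multiplicity at least $\nu_{j}$'' --- is already absorbed into Corollary~\ref{cor-covering}; it rests on the elementary fact that an $\alpha_{j}$-point of finite multiplicity $k<\nu_{j}$ produces, for all sufficiently small $\varepsilon$, a relatively compact simply-connected neighborhood mapped $k$-to-$1$ onto $D_{j}(\alpha_{j},\varepsilon)$, i.e.\ an island of multiplicity $k<\nu_{j}$. Thus the real content is carried by the uniform curvature estimate of Theorem~\ref{thm-ramification} with $m=2$, and the only thing to keep an eye on is that $\varepsilon$ is uniform over all complete minimal surfaces, which is guaranteed because the constant there does not depend on the surface.
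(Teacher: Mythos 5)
Your proof is correct and follows the paper's own route: Theorem \ref{thm-4-minimal3} is obtained exactly by specializing Corollary \ref{cor-covering} (equivalently, running its short contradiction argument against Corollary \ref{thm-4-minimal2}) to the complete induced metric $ds^{2}=(1+|g|^{2})^{2}|\omega|^{2}$, i.e.\ the case $m=2$, with the final observation that a constant Gauss map forces $X(\Sigma)$ to be a plane.
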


The important case of Theorem \ref{thm-4-minimal3} is the case where $q=9\,(=2\cdot 2+5)$ and ${\nu}_{j}=2$ for each $j$ 
$(j=1,\ldots, q)$. 

\begin{corollary}\label{thm-4-minimal4}
Let $X\colon \Sigma \to {\R}^{3}$ be a complete minimal surface. Let ${\alpha}_{1}, \ldots, 
{\alpha}_{9} \in \C\cup \{\infty \}$ be distinct and 
$D_{j}({\alpha}_{j}, \varepsilon):=\{z\in \C\cup \{\infty \} \,;\, |z, {\alpha}_{j}|< \varepsilon \}$ $(1\leq j\leq 9)$. 
Then there exists $\varepsilon > 0$ such that, if the Gauss map $g$ has no simple island of over any of the small disks $D_{j}({\alpha}_{j}, \varepsilon)$ 
for all $j\in \{1, \ldots , 9 \}$, then $g$ must be constant, that is, $X(\Sigma)$ is a plane. 
\end{corollary}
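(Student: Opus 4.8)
The plan is to derive Corollary~\ref{thm-4-minimal4} as the special case of Theorem~\ref{thm-4-minimal3} in which $q=9$ and every ${\nu}_{j}=2$. First I would check that this choice of data actually satisfies the hypothesis~(\ref{equ-413-ramification}): with ${\nu}_{j}=2$ for $j=1,\ldots,9$ one computes
$$
\gamma=\sum_{j=1}^{9}\Bigl(1-\dfrac{1}{2}\Bigr)=\dfrac{9}{2}>4,
$$
so~(\ref{equ-413-ramification}) holds. Given nine distinct values ${\alpha}_{1},\ldots,{\alpha}_{9}\in\C\cup\{\infty\}$, I would apply Theorem~\ref{thm-4-minimal3} to obtain an $\varepsilon_{0}>0$ such that, whenever the Gauss map $g$ of a complete minimal surface has no island of multiplicity less than $2$ over $D_{j}({\alpha}_{j},\varepsilon_{0})$ for every $j$, then $g$ is constant and $X(\Sigma)$ is a plane.

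The only point requiring care is the passage from the language of Theorem~\ref{thm-4-minimal3} (``no island of multiplicity less than ${\nu}_{j}$'') to that of the corollary (``no simple island''). Here ``multiplicity less than $2$'' means ``multiplicity exactly $1$,'' which by Definition~\ref{def-island} is precisely a simple island; so the hypothesis ``$g$ has no simple island over $D_{j}({\alpha}_{j},\varepsilon)$ for all $j$'' is literally the same as the hypothesis of Theorem~\ref{thm-4-minimal3} with ${\nu}_{j}=2$. A minor subtlety is that Theorem~\ref{thm-4-minimal3} also requires the disks $D_{j}({\alpha}_{j},\varepsilon)$ to be pairwise disjoint; since the ${\alpha}_{j}$ are distinct, the chordal distances $|{\alpha}_{i},{\alpha}_{j}|$ for $i<j$ have a positive minimum $L$, so I would simply shrink $\varepsilon_{0}$ if necessary to ensure $\varepsilon_{0}<L/2$, which makes the disks disjoint. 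Setting $\varepsilon:=\varepsilon_{0}$ then yields the claimed constant.

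Thus the structure of the proof is: (i) observe $q=9$, ${\nu}_{j}=2$ verifies~(\ref{equ-413-ramification}); (ii) shrink $\varepsilon$ so the disks are pairwise disjoint; (iii) invoke Theorem~\ref{thm-4-minimal3}; (iv) translate ``multiplicity less than $2$'' into ``simple.'' I do not anticipate any real obstacle, since all the analytic content has already been absorbed into Theorem~\ref{thm-4-minimal3} (and, ultimately, into Corollary~\ref{thm-4-minimal2} and Theorem~\ref{thm-ramification}); the corollary is purely a matter of specializing the parameters and matching terminology. If anything, the one thing to be careful about is not to claim more than the literal statement gives — namely that $\varepsilon$ depends only on the nine chosen values ${\alpha}_{1},\ldots,{\alpha}_{9}$ and not on the particular minimal surface, which is exactly the uniformity already built into Theorem~\ref{thm-4-minimal3}.
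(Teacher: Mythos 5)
Your proposal is correct and is exactly the route the paper takes: the corollary is stated as the specialization of Theorem \ref{thm-4-minimal3} to $q=9=2\cdot2+5$ and ${\nu}_{j}=2$, where $\gamma=9/2>4$ verifies (\ref{equ-413-ramification}) and ``no island of multiplicity less than $2$'' is by Definition \ref{def-island} the same as ``no simple island.'' Your extra remarks on shrinking $\varepsilon$ to make the disks disjoint and on the uniformity of $\varepsilon$ in the surface are consistent with the paper's setup and require no change.
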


Finally, by applying Theorem \ref{thm-unicity}, we provide the Fujimoto unicity theorem for the Gauss maps of complete minimal surfaces in ${\R}^{3}$. 

\begin{theorem}[{\cite[Theorem I]{Fu1993-2}}]\label{thm-4-minimal5}
Let $X\colon \Sigma \to {\R}^{3}$ and $\widehat{X}\colon \widehat{\Sigma}\to {\R}^{3}$ be two nonflat minimal surfaces and 
assume that there exists a conformal diffeomorphism $\Psi\colon \Sigma \to \widehat{\Sigma}$. Let $g\colon \Sigma \to \C\cup\{\infty \}$ and 
$\hat{g}\colon \widehat{\Sigma}\to \C\cup\{\infty \}$ be the Gauss maps of $X(\Sigma)$ and $\widehat{X}(\widehat{\Sigma})$, respectively. 
If $g\not\equiv \hat{g}\circ \Psi$ and either $X(\Sigma)$ or $\widehat{X}(\widehat{\Sigma})$ is complete, 
then $g$ and $\hat{g}\circ \Psi$ share at most $6\,(=2+4)$ distinct values. 
\end{theorem}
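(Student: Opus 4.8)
The plan is to deduce Theorem \ref{thm-4-minimal5} directly from Theorem \ref{thm-unicity} by recalling the Weierstrass representation of a minimal surface. As reviewed in Section 4.1, for an oriented minimal surface $X\colon \Sigma\to\R^{3}$ the induced metric takes the form $ds^{2}=(1+|g|^{2})^{2}|\omega|^{2}$, where $\omega$ is a holomorphic $1$-form, $g$ is the (stereographically projected) Gauss map, and the surface is nonflat precisely when $g$ is nonconstant. Thus a nonflat minimal surface is exactly an open Riemann surface carrying a conformal metric of the form \eqref{equ-metric1} with $m=2$ and nonconstant $g$, and completeness of $X(\Sigma)$ is completeness of $ds^{2}$. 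The same applies to $\widehat{X}\colon\widehat{\Sigma}\to\R^{3}$ with data $(\hat g,\hat\omega)$ and $d\hat s^{2}=(1+|\hat g|^{2})^{2}|\hat\omega|^{2}$.

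First I would set up a contradiction: suppose $g\not\equiv\hat g\circ\Psi$ yet $g$ and $\hat g\circ\Psi$ share $q\geq 7\,(=(2+4)+1=2+5)$ distinct values $\alpha_{1},\dots,\alpha_{q}$, i.e. $g^{-1}(\alpha_{j})=(\hat g\circ\Psi)^{-1}(\alpha_{j})$ for $1\leq j\leq q$. Next I would invoke Theorem \ref{thm-unicity} with $m=2$: its hypotheses are exactly that $g$, $\hat g$ are nonconstant meromorphic functions on $\Sigma$, $\widehat{\Sigma}$ with metrics \eqref{equ-metric1}, \eqref{equ-metric2} for $m=2$, that there is a conformal diffeomorphism $\Psi\colon\Sigma\to\widehat{\Sigma}$ (which is given), that $g^{-1}(\alpha_{j})=(\hat g\circ\Psi)^{-1}(\alpha_{j})$, that $q\geq m+5=7$, and that either $ds^{2}$ or $d\hat s^{2}$ is complete — which holds since either $X(\Sigma)$ or $\widehat{X}(\widehat{\Sigma})$ is complete. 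Theorem \ref{thm-unicity} then forces $g\equiv\hat g\circ\Psi$, contradicting our assumption. Hence under the standing hypotheses $g$ and $\hat g\circ\Psi$ can share at most $m+4=6$ distinct values.

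This is essentially a dictionary translation, so there is no serious obstacle; the only point that deserves a sentence of care is confirming that ``nonflat minimal surface'' translates to ``nonconstant Gauss map,'' which is immediate from \eqref{equ-412-metric} together with the curvature formula \eqref{equ-Gaussian} (with $m=2$), showing $K_{ds^{2}}\equiv 0$ iff $g$ is constant iff the surface is a plane. One should also note that $q=m+4=6$ is the asserted sharp bound, matched (for the general even-$m$ statement) by Example \ref{exa-unicity}, so the conclusion ``at most $6$'' is the best possible for this family.

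\begin{proof}[{\it Proof of Theorem \ref{thm-4-minimal5}}]
Suppose, contrary to the assertion, that $g\not\equiv\hat g\circ\Psi$ while $g$ and $\hat g\circ\Psi$ share $q\geq 7$ distinct values $\alpha_{1},\dots,\alpha_{q}\in\C\cup\{\infty\}$, that is, $g^{-1}(\alpha_{j})=(\hat g\circ\Psi)^{-1}(\alpha_{j})$ for $1\leq j\leq q$. By the Weierstrass representation recalled above, the induced metrics of $X(\Sigma)$ and $\widehat{X}(\widehat{\Sigma})$ are $ds^{2}=(1+|g|^{2})^{2}|\omega|^{2}$ and $d\hat s^{2}=(1+|\hat g|^{2})^{2}|\hat\omega|^{2}$ for holomorphic $1$-forms $\omega$, $\hat\omega$, so these are conformal metrics of the form \eqref{equ-metric1}, \eqref{equ-metric2} with $m=2$. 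By \eqref{equ-412-metric} and the curvature formula \eqref{equ-Gaussian} with $m=2$, nonflatness of $X(\Sigma)$ and $\widehat{X}(\widehat{\Sigma})$ is equivalent to $g$ and $\hat g$ being nonconstant. Moreover, since either $X(\Sigma)$ or $\widehat{X}(\widehat{\Sigma})$ is complete, either $ds^{2}$ or $d\hat s^{2}$ is complete. The conformal diffeomorphism $\Psi\colon\Sigma\to\widehat{\Sigma}$ is given by hypothesis, and $q\geq 7=m+5$. Therefore all the hypotheses of Theorem \ref{thm-unicity} are satisfied for $m=2$, and we conclude $g\equiv\hat g\circ\Psi$, contradicting our assumption. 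Hence $g$ and $\hat g\circ\Psi$ share at most $6\,(=2+4)$ distinct values.
\end{proof}
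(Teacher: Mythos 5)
Your proposal is correct and is exactly the route the paper intends: Theorem \ref{thm-4-minimal5} is stated as an immediate application of Theorem \ref{thm-unicity} with $m=2$, using the Weierstrass representation \eqref{equ-412-metric} to identify the induced metric with \eqref{equ-metric1}, nonflatness with nonconstancy of $g$, and completeness of the surface with completeness of $ds^{2}$. Your contrapositive formulation (assuming $q\geq 7$ shared values forces $g\equiv\hat g\circ\Psi$) is the same dictionary translation the paper relies on, so there is nothing to add.
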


\subsection{Hyperbolic Gauss map of constant mean curvature one surfaces in ${\H}^{3}$} 
We denote by ${\H}^{3}$ hyperbolic 3-space, that is, the simply connected Riemannian 3-manifold with constant sectional curvature $-1$, 
which is represented as 
$$
{\H}^{3}= SL(2, \C) / SU(2) =\{aa^{\ast} \,;\, a\in SL(2, \C) \} \quad (a^{\ast}:=\tr{\bar{a}}). 
$$
Then there exists the following representation formula for constant mean curvature one (CMC-1, for short) surfaces in ${\H}^{3}$ 
as an analogy of the Enneper-Weierstrass representation formula in minimal surface theory.  

\begin{theorem}[\cite{Br1987}, \cite{UY1993}]\label{thm-CMC-1}
Let $\widetilde{\Sigma}$ be a simply connected Riemann surface with a base point $z_{0}\in \widetilde{\Sigma}$ and $(g, \omega)$ a pair consisting 
of a meromorphic function and a holomorphic 1-form on $\widetilde{\Sigma}$ such that 
\begin{equation}\label{equ-CMC-1}
ds^{2}=(1+|g|^{2})^{2}|\omega|^{2}
\end{equation}
gives a (positive definite) Riemannian metric on $\widetilde{\Sigma}$. Take a holomorphic immersion $F=(F_{ij})\colon \widetilde{\Sigma}\to SL(2, \C)$ satisfying 
$F(z_{0})=id$ and 
\begin{equation}\label{equ-CMC-2}
F^{-1}dF=\left(
\begin{array}{cc}
g & -g^{2}  \\
1 & -g
\end{array}
\right)\omega.
\end{equation}
Then $f\colon \widetilde{\Sigma}\to {\H}^{3}$ defined by 
\begin{equation}\label{equ-CMC-3}
f=FF^{\ast}
\end{equation}
is a CMC-1 surface and the induced metric of $f$ is $ds^{2}$. Moreover the second fundamental form $h$ and the Hopf differential $Q$ of $f$ are 
given by 
$$
h=-Q-\overline{Q}+ds^{2}, \quad Q=\omega\, dg. 
$$ 
Conversely, for any CMC-1 surface $f\colon \widetilde{\Sigma}\to {\H}^{3}$, there exist a meromorphic function $g$ and a holomorphic 1-form $\omega$ 
on $\widetilde{\Sigma}$ such that the induced metric of $f$ is given by (\ref{equ-CMC-1}) and (\ref{equ-CMC-3}) holds, where the map 
$F\colon \widetilde{\Sigma}\to SL(2, \C)$ is a holomorphic null (``null'' means $\det{(F^{-1}dF)}=0$) immersion satisfying (\ref{equ-CMC-2}). 
\end{theorem}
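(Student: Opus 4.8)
The final statement (Theorem \ref{thm-CMC-1}) is the Bryant representation formula, classical after \cite{Br1987} and \cite{UY1993}, so the plan is to follow those arguments. The direct (representation) direction is essentially a $2\times 2$ matrix computation, whereas the converse rests on the principle that CMC-1 surfaces in $\H^{3}$ are exactly the projections $f=FF^{\ast}$ of holomorphic null immersions $F\colon\widetilde{\Sigma}\to SL(2,\C)$, in perfect analogy with the fact that minimal surfaces in $\R^{3}$ are real parts of holomorphic null curves in $\C^{3}$.

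For the direct direction I would argue as follows. The $1$-form $\alpha:=F^{-1}dF$ prescribed by (\ref{equ-CMC-2}) is $\mathfrak{sl}(2,\C)$-valued (its trace is $g\omega-g\omega=0$), holomorphic, and pointwise nilpotent ($\det\alpha=0$, indeed $\alpha^{2}=0$). Since $\alpha$ is a holomorphic $1$-form we have $d\alpha=0$, and $\alpha\wedge\alpha=0$ trivially, so the integrability condition $d\alpha+\alpha\wedge\alpha=0$ holds and $F$ exists on the simply connected surface $\widetilde{\Sigma}$ with $F(z_{0})=\mathrm{id}$; moreover $d\log\det F=\trace(F^{-1}dF)=0$ forces $\det F\equiv 1$, so $F$ maps into $SL(2,\C)$. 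Differentiating $f=FF^{\ast}$ gives $df=dF\cdot F^{\ast}+F\cdot dF^{\ast}=F(\alpha+\alpha^{\ast})F^{\ast}$, hence $f^{-1}df=(F^{\ast})^{-1}(\alpha+\alpha^{\ast})F^{\ast}$; using the conjugation-invariance of the trace and $\alpha^{2}=(\alpha^{\ast})^{2}=0$, the induced metric of $f$ reduces to a multiple of $\trace(\alpha\alpha^{\ast})$, and a direct multiplication shows this equals $(1+|g|^{2})^{2}|\omega|^{2}=ds^{2}$, which is positive definite exactly under the stated hypothesis, so $f$ is an immersion. Finally, I would compute the unit normal of $f$ and its covariant derivative: the very special nilpotent shape of $\alpha$ forces the mean curvature to be $H\equiv 1$ and yields $h=-Q-\overline{Q}+ds^{2}$ with $Q=\omega\,dg$.

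For the converse, given a CMC-1 surface $f\colon\widetilde{\Sigma}\to\H^{3}$ on a simply connected $\widetilde{\Sigma}$, the plan is to reconstruct the holomorphic null lift. Choosing conformal coordinates, one records the first fundamental form, the Hopf differential $Q$, and the CMC-1 condition, which together are equivalent to the Gauss and Codazzi equations of $f$. Out of this data one writes down a candidate $\mathfrak{sl}(2,\C)$-valued connection form $\Omega$ for an $SL(2,\C)$-lift of the Gauss map; the key step is to check that the structure equations of a CMC-1 surface in $\H^{3}$ are precisely the flatness condition $d\Omega+\Omega\wedge\Omega=0$. Granting this, one integrates $\Omega$ on the simply connected $\widetilde{\Sigma}$ to a holomorphic $F\colon\widetilde{\Sigma}\to SL(2,\C)$ with $F(z_{0})=\mathrm{id}$, verifies that $F^{-1}dF$ is nilpotent, reads off $g$ and $\omega$ from its entries as in (\ref{equ-CMC-2}), and uses the direct direction already proved to see that $FF^{\ast}$ has induced metric $ds^{2}$ and agrees with $f$ up to a rigid motion of $\H^{3}$ (which, being realized by an element of $SL(2,\C)$ acting as $P\mapsto aPa^{\ast}$, is absorbed by replacing $F$ by $aF$).

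The main obstacle is exactly the flatness claim in the converse: showing that the Gauss--Codazzi equations of a CMC-1 surface in $\H^{3}$ are equivalent to the integrability $d\Omega+\Omega\wedge\Omega=0$ of the null-lift connection. This is the computation in which the constant value $1$ of the mean curvature (matched to the curvature $-1$ of $\H^{3}$) is genuinely used, and it is what upgrades the smooth moving frame to a \emph{holomorphic} lift $F$. Everything else — integration of a flat connection on a simply connected surface, constancy of $\det F$, the matrix identities in the direct direction, and the final congruence argument — is routine once this point is established.
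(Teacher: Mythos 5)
This statement is not proved in the paper at all: Theorem \ref{thm-CMC-1} is quoted as background from \cite{Br1987} and \cite{UY1993}, and your outline is exactly the standard argument of those sources (direct direction by integrating the flat, nilpotent, holomorphic connection $\alpha=F^{-1}dF$ on the simply connected surface and computing $-\det(df)=\trace(\alpha\alpha^{\ast})|_{\text{suitably normalized}}=(1+|g|^{2})^{2}|\omega|^{2}$; converse by showing the Gauss--Codazzi equations with $H\equiv 1$ are the integrability condition for a holomorphic null lift, then absorbing the ambient isometry into $F$). Your sketch is sound and correctly isolates the only genuinely nontrivial point, namely producing the \emph{holomorphic} null lift in the converse, so no discrepancy with the paper arises.
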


Following the terminology of \cite{UY1993}, $g$ is called a {\it secondary Gauss map} of $f$. The pair $(g, \omega)$ is called {\it Weierstrass data} 
of $f$. Let $f\colon \Sigma\to {\H}^{3}$ be a CMC-1 surface on a (not necessarily simply connected) Riemann surface $\Sigma$. Then the map $F$ 
is defined only on its universal covering surface $\widetilde{\Sigma}$. Thus the pair $(\omega, g)$ is not single-valued on $\Sigma$. 
However the {\it hyperbolic Gauss map} of $f$ defined by 
$$
G=\dfrac{dF_{11}}{dF_{21}}=\dfrac{dF_{12}}{dF_{22}}, \quad \text{where}\quad F(z)=\left(
\begin{array}{cc}
F_{11}(z) & F_{12}(z)  \\
F_{21}(z) & F_{22}(z)
\end{array}
\right)
$$ 
is a single-valued meromorphic function on $\Sigma$. By identifying the ideal boundary ${\Si}^{2}_{\infty}$ of ${\H}^{3}$ with the Riemann 
sphere $\C\cup\{\infty \}$, the geometric meaning of $G$ is given as follows (cf. \cite{Br1987}): The hyperbolic Gauss map $G$ sends each 
$p\in \Sigma$ to the point $G(p)$ at ${\Si}^{2}_{\infty}$ reached by the oriented normal geodesics emanating from the surface.  The inverse 
matrix $F^{-1}$ is also a holomorphic null immersion and produce a new CMC-1 surface $f^{\sharp}:=F^{-1}(F^{-1})^{\ast}\colon \widetilde{\Sigma} \to 
{\H}^{3}$ which is called the {\it dual} of $f$ (\cite{UY1997}). Then the Weierstrass data $(g^{\sharp}, {\omega}^{\sharp})$, the Hopf differential 
$Q^{\sharp}$, and the hyperbolic Gauss map $G^{\sharp}$ of $f^{\sharp}$ are given by 
\begin{equation}\label{equ-CMC-4}
g^{\sharp}=G, \quad {\omega}^{\sharp}=-\dfrac{Q}{dG}, \quad Q^{\sharp}=-Q, \quad G^{\sharp}= g. 
\end{equation}
By Theorem \ref{thm-CMC-1} and (\ref{equ-CMC-4}) , the induced metric $ds^{2\sharp}$ of $f^{\sharp}$ is given by 
\begin{equation}\label{equ-CMC-5}
ds^{2\sharp}=(1+|g^{\sharp}|^{2})^{2}|{\omega}^{\sharp}|^{2}=(1+|G|^{2})^{2}\biggl{|}\dfrac{Q}{dG}\biggr{|}^{2}. 
\end{equation}
We call the metric $ds^{2\sharp}$ the {\it dual metric} of $f$. There exists the following relationship between the metric $ds^{2}$ and 
the dual metric $ds^{2\sharp}$. 

\begin{theorem}[\cite{UY1997}, \cite{Yu1997}]\label{thm-CMC-2}
The metric $ds^{2}$ is complete (resp. nondegenerate) if and only if the dual metric $ds^{2\sharp}$ is complete (resp. nondegenerate). 
\end{theorem}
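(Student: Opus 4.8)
\noindent The plan is to extract an explicit pointwise relation between the two metrics, settle the ``nondegenerate'' half by a pointwise argument, and reduce the ``complete'' half to an analysis of the lift $F$ near the ends, the only non-local ingredient. Two preliminary reductions: since $F^{-1}$ is again a holomorphic null immersion and $(F^{-1})^{-1}=F$, the dual of $f^{\sharp}$ is $f$ itself (compare (\ref{equ-CMC-4})), so in each equivalence it is enough to prove one implication, the other following by applying the argument to $f^{\sharp}$; and, writing (\ref{equ-CMC-2}) as $F^{-1}dF=\tr{(g,1)}(1,-g)\,\omega$, one reads off $dF_{11}=(F_{11}g+F_{12})\,\omega$ and $dF_{21}=(F_{21}g+F_{22})\,\omega$, so the hyperbolic Gauss map is the M\"obius image $G=(F_{11}g+F_{12})/(F_{21}g+F_{22})$ of $g$ under $F$; differentiating $G$ and using (\ref{equ-CMC-2}) again, the cross terms cancel and $dG=dg/(F_{21}g+F_{22})^{2}$. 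Substituting $Q=\omega\,dg$ and this identity into (\ref{equ-CMC-5}), and noting $(1+|G|^{2})|F_{21}g+F_{22}|^{2}=|F_{11}g+F_{12}|^{2}+|F_{21}g+F_{22}|^{2}=\|Fv\|^{2}$ with $v:=\tr{(g,1)}$ and $\|\cdot\|$ the Hermitian norm on $\C^{2}$, one obtains on the universal cover $\widetilde{\Sigma}$ (where $F$ is single-valued) the pointwise relation $ds^{2\sharp}=\bigl(\|Fv\|^{2}/\|v\|^{2}\bigr)^{2}ds^{2}$, a conformal factor that depends only on the line $[g]\in\C\cup\{\infty\}$ and so is defined at the poles of $g$ too.

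For the ``nondegenerate'' half this finishes the proof: $F(p)\in SL(2,\C)$ is invertible, so $\|Fv\|^{2}/\|v\|^{2}$ lies in $[\sigma(p)^{-2},\sigma(p)^{2}]$, where $\sigma(p)\ge1$ is the larger singular value of $F(p)$, hence is finite and strictly positive everywhere; thus $ds^{2}$ is positive definite at a point exactly when $ds^{2\sharp}$ is, and since this is a local condition preserved by the covering projection, both directions follow. For the ``complete'' half, assume $ds^{2}$ is complete and let $\Gamma$ be a divergent curve in $\Sigma$; lifted to $\widetilde{\Sigma}$ it stays divergent and $\int_{\Gamma}ds=\infty$. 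Recall $f=FF^{\ast}\colon(\widetilde{\Sigma},ds^{2})\to\H^{3}=SL(2,\C)/SU(2)$ is an isometric immersion, hence distance-nonincreasing, and that $\sigma(p)^{2}=\lambda_{\max}(FF^{\ast})$, which tends to $\infty$ precisely as $f(p)$ leaves every compact subset of $\H^{3}$. If $f$ stays in a compact subset of $\H^{3}$ along $\Gamma$, then $\sigma$ is bounded there, the conformal factor is bounded below by a positive constant, and $\int_{\Gamma}ds^{\sharp}\ge c\int_{\Gamma}ds=\infty$; so $ds^{2\sharp}$ is complete in this case.

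The remaining, genuinely harder case is when $f(\Gamma)$ escapes to the ideal boundary of $\H^{3}$, i.e.\ $\sigma\to\infty$ along $\Gamma$: then the crude bound $ds^{\sharp}\ge\sigma^{-2}ds$ decays exponentially in $ds^{2}$-arclength and is too weak. The point that must be proved is that along such an end the line $[g]\in\C\cup\{\infty\}$ stays bounded away from the contracting singular direction of $F$ --- the only locus where $\|Fv\|^{2}/\|v\|^{2}$ has size $\sigma^{-2}$ --- so that in fact $ds^{\sharp}$ dominates a fixed multiple of $ds$ near the end. To control this I would use the standard Schwarzian identity $\mathcal{S}(g)-\mathcal{S}(G)=2Q$ with $Q$ holomorphic (so $g$ and $G$ are ramified at the same points with the same multiplicities), together with the description of $G$ as the ideal endpoint of the oriented normal geodesic, and analyse the asymptotics of $F$, $g$, $G$ on a neighbourhood of the end. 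This end analysis --- ruling out that completeness of $ds^{2}$ ``leaks out'' through the contracting directions of $F$ --- is the main obstacle; granting it, the length estimate is routine and gives $\int_{\Gamma}ds^{\sharp}=\infty$, and the same reasoning applied to $f^{\sharp}$ yields the reverse implication. This is the route taken in \cite{UY1997} and \cite{Yu1997}.
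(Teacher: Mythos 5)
The paper itself gives no proof of this statement (it is imported from \cite{UY1997} and \cite{Yu1997}), so your argument has to stand on its own. Its first half does: the pointwise identity you derive is correct ($dG=dg/(F_{21}g+F_{22})^{2}$, hence $Q/dG=(F_{21}g+F_{22})^{2}\omega$ and $ds^{2\sharp}=\bigl(\|Fv\|^{2}/\|v\|^{2}\bigr)^{2}ds^{2}$ with $v=\tr{(g,1)}$), the factor depends only on the point of $\C\cup\{\infty\}$ determined by $g$ and is pinched between $\sigma_{\max}(F)^{-4}$ and $\sigma_{\max}(F)^{4}$, and since it descends to $\Sigma$ (both metrics are single-valued there) this settles the nondegeneracy equivalence. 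The duality reduction to one implication is also fine.

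The completeness half, however, has a genuine gap, and you say so yourself: in the case where $f(\Gamma)$ escapes to the ideal boundary you only describe a program (asymptotics of $F,g,G$ at the end, the Schwarzian relation) without carrying it out, and the intermediate statement you aim at --- that $ds^{\sharp}$ dominates a fixed multiple of $ds$ near such an end --- is stronger than completeness requires and is not established. The way to close this (essentially the argument of \cite{UY1997}) is to run the implication contrapositively, which eliminates all end analysis: assume $ds^{2}$ is complete and suppose some divergent curve $\Gamma$ had finite $ds^{\sharp}$-length. Since $ds^{2\sharp}$ is the metric induced on the surface by the dual immersion $f^{\sharp}=F^{-1}(F^{-1})^{\ast}$ and ${\H}^{3}$ is complete, $f^{\sharp}\circ\Gamma$ has finite length in ${\H}^{3}$ and therefore stays in a compact subset of ${\H}^{3}$; because $\det F^{-1}=1$, boundedness of $F^{-1}(F^{-1})^{\ast}$ in ${\H}^{3}$ is exactly boundedness of $\sigma_{\max}(F^{-1})=\sigma_{\max}(F)$ along $\Gamma$. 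Then your conformal factor satisfies $\|Fv\|^{2}/\|v\|^{2}\geq\sigma_{\max}(F)^{-2}\geq c>0$ on $\Gamma$, so $\int_{\Gamma}ds\leq c^{-1}\int_{\Gamma}ds^{\sharp}<+\infty$, contradicting completeness of $ds^{2}$; the reverse implication follows by duality as you note. (Equivalently, $ds^{2\sharp}$ is the pullback by $F^{-1}$ of the left-invariant Hermitian metric of $SL(2,\C)$, which is complete, so finite $ds^{\sharp}$-length forces $F^{-1}$, hence $F$, to stay bounded.) In short, your case distinction interrogates the wrong map: it is the boundedness of the \emph{dual} surface $f^{\sharp}$, which comes for free from the finiteness hypothesis, and not the behaviour of $f$ along the end, that makes the proof close.
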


Applying Theorem \ref{thm-ramification} to the dual metric $ds^{2\sharp}$, we get the following theorem. 
\begin{theorem}\label{thm-CMC-3} 
Let $f\colon \Sigma \to {\H}^{3}$ be a CMC-1 surface. Let $q\in \N$, ${\alpha}_{1}, \ldots, {\alpha}_{q}\in \C\cup \{\infty \}$ be 
distinct and ${\nu}_{1}, \ldots, {\nu}_{q}\in \N\cup \{\infty \}$. Suppose that the inequality (\ref{equ-413-ramification}) holds. 
If the hyperbolic Gauss map $G\colon \Sigma \to \C\cup \{\infty \}$ satisfies the property that all ${\alpha}_{j}$-points of $G$ have 
multiplicity at least ${\nu}_{j}$, then there exists a positive constant $C$, depending on 
${\alpha}_{1}, \ldots, {\alpha}_{q}$, but not the surface, such that for all $p\in \Sigma$ we have
$$
|K_{ds^{2\sharp}}(p)|^{1/2}\leq \dfrac{C}{d(p)}, 
$$
where $K_{ds^{2\sharp}}(p)$ is the Gaussian curvature of the metric $ds^{2\sharp}$ at $p$ and $d(p)$ is the geodesic distance from $p$ to 
the boundary of $\Sigma$. 
\end{theorem}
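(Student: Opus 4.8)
The plan is to obtain Theorem \ref{thm-CMC-3} as a direct consequence of Theorem \ref{thm-ramification}, applied not to the induced metric $ds^{2}$ of $f$ but to its dual metric $ds^{2\sharp}$. By (\ref{equ-CMC-4}) and (\ref{equ-CMC-5}) we have
\[
ds^{2\sharp}=(1+|g^{\sharp}|^{2})^{2}|{\omega}^{\sharp}|^{2}=(1+|G|^{2})^{2}\biggl|\dfrac{Q}{dG}\biggr|^{2},
\]
so, setting $m=2$ and taking the hyperbolic Gauss map $G$ in the role of the meromorphic function and ${\omega}^{\sharp}=-Q/dG$ in the role of the holomorphic $1$-form, the dual metric is precisely a metric of the form (\ref{equ-metric}). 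The first thing I would do is record that this assignment is legitimate on $\Sigma$ itself: $G$ is a single-valued meromorphic function on $\Sigma$, and since the Hopf differential $Q$ and the meromorphic $1$-form $dG$ are both single-valued on $\Sigma$, so is ${\omega}^{\sharp}=-Q/dG$. Moreover, as $f$ is an immersion, its induced metric $ds^{2}$ is nondegenerate, and hence by Theorem \ref{thm-CMC-2} the dual metric $ds^{2\sharp}$ is nondegenerate on $\Sigma$ as well; this rules out poles of ${\omega}^{\sharp}$ (a pole of ${\omega}^{\sharp}$ away from the poles of $G$ would make $ds^{2\sharp}$ blow up, and at a pole of $G$ the order of ${\omega}^{\sharp}$ is forced by nondegeneracy), so ${\omega}^{\sharp}$ is a genuine holomorphic $1$-form on $\Sigma$.

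Once the data $(G,{\omega}^{\sharp})$ on $\Sigma$ with $m=2$ are in place, the remaining hypotheses of Theorem \ref{thm-ramification} are matched verbatim by those of Theorem \ref{thm-CMC-3}: the inequality (\ref{equ-413-ramification}), i.e. $\gamma>4$, is exactly the condition (\ref{equ-ramification}) with $m=2$, and the assumption that every ${\alpha}_{j}$-point of $G$ has multiplicity at least ${\nu}_{j}$ is exactly the ramification hypothesis imposed on the meromorphic function there (with ${\nu}_{j}\in\N\cup\{\infty\}$ allowed). Theorem \ref{thm-ramification} then furnishes a positive constant $C$, depending only on $m=2$, on $\gamma$ and on ${\alpha}_{1},\ldots,{\alpha}_{q}$ but not on the surface, such that $|K_{ds^{2\sharp}}(p)|^{1/2}\leq C/d(p)$ for every $p\in\Sigma$, where $d(p)$ is the geodesic distance with respect to $ds^{2\sharp}$ from $p$ to the boundary of $\Sigma$. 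This is precisely the conclusion of Theorem \ref{thm-CMC-3}.

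I expect the only non-formal point to be the verification that ${\omega}^{\sharp}$ is a bona fide holomorphic $1$-form on $\Sigma$ itself — rather than merely on the universal cover $\widetilde{\Sigma}$, where the dual surface $f^{\sharp}$ naturally lives — and this is exactly where the Br\"ack--Umehara--Yamada representation theory and Theorem \ref{thm-CMC-2} are needed. I would resist the temptation to argue instead by passing to $\widetilde{\Sigma}$ and pushing the estimate down, since the boundary distance $d(p)$ can only shrink under the covering projection and such an argument would yield a strictly weaker bound. Apart from this, the proof is nothing more than reading Theorem \ref{thm-ramification} with $g$ replaced by $G$, $\omega$ replaced by ${\omega}^{\sharp}$, and $m=2$.
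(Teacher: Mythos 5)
Your proposal is correct and is essentially the paper's own argument: the paper simply applies Theorem \ref{thm-ramification} with $m=2$ to the dual metric $ds^{2\sharp}=(1+|G|^{2})^{2}|Q/dG|^{2}$, using that $G$ and ${\omega}^{\sharp}=-Q/dG$ are single-valued on $\Sigma$ and that $ds^{2\sharp}$ is a genuine conformal metric there (via Theorem \ref{thm-CMC-2}). Your extra care in checking that ${\omega}^{\sharp}$ is a holomorphic $1$-form on $\Sigma$ itself, rather than only on the universal cover, is exactly the point the paper leaves implicit.
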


Combining of Theorems \ref{thm-CMC-2} and \ref{thm-CMC-3}, we get the following ramification theorem for the hyperbolic Gauss map of CMC-1 surfaces 
in ${\H}^{3}$. 
\begin{corollary}[\cite{Ka2011}, \cite{Yu1997}]\label{thm-CMC-4}
Let $f\colon \Sigma \to {\H}^{3}$ be a complete CMC-1 surface. 
Let $q\in \N$, ${\alpha}_{1}, \ldots, {\alpha}_{q}\in \C\cup \{\infty \}$ be distinct and ${\nu}_{1}, \ldots, {\nu}_{q}\in \N\cup \{\infty \}$. 
Suppose that the inequality (\ref{equ-413-ramification}) holds. If the hyperbolic Gauss map $G\colon \Sigma \to \C\cup \{\infty \}$ satisfies 
the property that all ${\alpha}_{j}$-points of $G$ have multiplicity at least ${\nu}_{j}$, then $G$ must be constant, that is, $f(\Sigma)$ is a horosphere. 
In particular, if the hyperbolic Gauss map of a nonflat complete CMC-1 surface in ${\H}^{3}$ can omit at most $4\,(=2+2)$ values. 
\end{corollary}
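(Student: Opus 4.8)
The plan is to transfer the statement to the \emph{dual} metric $ds^{2\sharp}$ and then invoke Corollary~\ref{cor-ramification}. I would argue by contradiction, assuming the hyperbolic Gauss map $G$ is nonconstant. Under this assumption, by Theorem~\ref{thm-CMC-1} and the relations (\ref{equ-CMC-4}) the dual Weierstrass data $(g^{\sharp},\omega^{\sharp}) = (G,\,-Q/dG)$ consists of a meromorphic function and a \emph{holomorphic} $1$-form on $\Sigma$ (this holomorphicity, which is not evident from the formula $-Q/dG$ alone, is the essential structural input from \cite{UY1997}), so that by (\ref{equ-CMC-5}) the dual metric $ds^{2\sharp} = (1+|G|^{2})^{2}|\omega^{\sharp}|^{2}$ is precisely a conformal metric of the form (\ref{equ-metric}) with $m=2$ and with meromorphic function $g = G$. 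It is crucial to work with $ds^{2\sharp}$ rather than the induced metric $ds^{2}$, since $G$ is single-valued on all of $\Sigma$ whereas the original pair $(g,\omega)$ in general is not.

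Next, since $f$ is complete, $ds^{2}$ is complete, and hence by Theorem~\ref{thm-CMC-2} the dual metric $ds^{2\sharp}$ is complete as well. We are then exactly in the situation of Corollary~\ref{cor-ramification} with $m=2$: the hypothesis (\ref{equ-413-ramification}) reads $\gamma > 4 = m+2$, which is (\ref{equ-ramification}), and by assumption every $\alpha_{j}$-point of $G$ has multiplicity at least $\nu_{j}$. Corollary~\ref{cor-ramification} therefore forces $G$ to be constant, contradicting our standing assumption; hence $G$ is constant.

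It remains to identify $f(\Sigma)$ once $G$ is constant. I would pass to the dual surface $f^{\sharp}$, whose secondary Gauss map $g^{\sharp} = G$ is then constant; by (\ref{equ-CMC-2}) the $1$-form $F^{-1}dF$ equals a fixed nilpotent matrix times the holomorphic $1$-form $\omega^{\sharp}$, which integrates explicitly to exhibit $f^{\sharp}$ as an open part of a horosphere, and since $(f^{\sharp})^{\sharp}=f$ and the dual of a horosphere is again a horosphere, $f(\Sigma)$ is a horosphere (compare \cite{Br1987}). For the final assertion: if a nonflat complete CMC-1 surface in ${\H}^{3}$ had hyperbolic Gauss map omitting $5$ distinct values $\alpha_{1},\ldots,\alpha_{5}$, put $\nu_{1}=\cdots=\nu_{5}=\infty$; then $\gamma = \sum_{j=1}^{5}(1-1/\nu_{j}) = 5 > 4$, the multiplicity condition is vacuous, and the first part forces $f(\Sigma)$ to be a horosphere, a contradiction. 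Hence $G$ omits at most $4$ values.

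The main obstacle is essentially organizational rather than analytic: because $ds^{2\sharp}$ is undefined when $G$ is constant, the argument must be set up as a proof by contradiction, and one must carefully justify that $\omega^{\sharp}=-Q/dG$ extends to a holomorphic $1$-form on all of $\Sigma$ — a point that rests on the structure theory of CMC-1 surfaces. Once this is in hand, the statement follows immediately by combining Theorem~\ref{thm-CMC-2} with Corollary~\ref{cor-ramification}.
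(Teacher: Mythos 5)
Your proposal is correct and follows essentially the paper's own route: pass to the dual metric $ds^{2\sharp}=(1+|G|^{2})^{2}|Q/dG|^{2}$ (the $m=2$ case of (\ref{equ-metric}) with $g=G$), use Theorem \ref{thm-CMC-2} to transfer completeness, and apply the ramification result. The only cosmetic difference is that the paper phrases this as combining Theorem \ref{thm-CMC-3} (the curvature bound for $ds^{2\sharp}$) with completeness, whereas you invoke Corollary \ref{cor-ramification} directly; these are the same argument, and your extra remarks on the single-valuedness of $(G,Q)$ on $\Sigma$ and the horosphere identification are sound.
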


Moreover we obtain the following analogue to the Ahlfors islands theorem. 

\begin{theorem}\label{thm-CMC-5}
Let $f\colon \Sigma \to {\H}^{3}$ be a complete CMC-1 surface. 
Let $q\in \N$, ${\alpha}_{1}, \ldots, {\alpha}_{q}\in \C\cup \{\infty \}$ be distinct, 
$D_{j}({\alpha}_{j}, \varepsilon):=\{z\in \C\cup \{\infty \} \,;\, |z, {\alpha}_{j}|< \varepsilon \}$ $(1\leq j\leq q)$ be pairwise disjoint and 
${\nu}_{1}, \ldots, {\nu}_{q}\in \N$. Suppose that the inequality (\ref{equ-413-ramification}) holds. 
Then there exists $\varepsilon > 0$ such that, if the hyperbolic Gauss map $G$ has no island of multiplicity less than ${\nu}_{j}$ over 
$D_{j}({\alpha}_{j}, \varepsilon)$ for all $j\in \{1, \ldots , q\}$, then $G$ must be constant, that is, $f(\Sigma)$ is a horosphere. 
\end{theorem}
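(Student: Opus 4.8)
The plan is to deduce the theorem from Corollary~\ref{cor-covering} by transporting it through the dual metric, exactly as Theorem~\ref{thm-CMC-3} was obtained from Theorem~\ref{thm-ramification}. Recall from Section~4.2 that the dual metric of a CMC-1 surface $f\colon\Sigma\to{\H}^{3}$ is $ds^{2\sharp}=(1+|G|^{2})^{2}|Q/dG|^{2}=(1+|G|^{2})^{m}|{\omega}^{\sharp}|^{2}$, where $G$ is the hyperbolic Gauss map, $m=2$, and ${\omega}^{\sharp}=-Q/dG$ is a holomorphic $1$-form, so that $(G,{\omega}^{\sharp})$ on $\Sigma$ falls within the framework of Section~2. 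First I would observe that, by Theorem~\ref{thm-CMC-2}, completeness of the induced metric $ds^{2}$ forces $ds^{2\sharp}$ to be complete. Since the hypothesis (\ref{equ-413-ramification}) reads $\gamma>m+2$ for $m=2$, Corollary~\ref{cor-covering} then supplies a constant $\varepsilon>0$, depending only on $\alpha_{1},\dots,\alpha_{q}$ and $\nu_{1},\dots,\nu_{q}$ and not on the surface, such that the absence of islands of $G$ of multiplicity less than $\nu_{j}$ over $D_{j}(\alpha_{j},\varepsilon)$ for every $j$ forces $G$ to be constant; and $G$ constant means, as in Corollary~\ref{thm-CMC-4}, that $f(\Sigma)$ is a horosphere.

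One may equally well repeat the proof of Corollary~\ref{cor-covering} in this setting directly: argue by contradiction, producing for every $\varepsilon>0$ a complete CMC-1 surface with nonconstant hyperbolic Gauss map $G$ having no island of multiplicity less than $\nu_{j}$ over $D_{j}(\alpha_{j},\varepsilon)$; observe that this island-free condition forces every $\alpha_{j}$-point of $G$ to have multiplicity at least $\nu_{j}$; and then invoke the ramification theorem Corollary~\ref{thm-CMC-4} to conclude that $G$ is constant, contradicting nonconstancy. The geometric content is thereby entirely reduced to Theorem~\ref{thm-CMC-2} and to the curvature estimate underlying Corollary~\ref{thm-CMC-4}.

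The step that requires care — the same one that already underlies Corollary~\ref{cor-covering} — is the dictionary between islands and ramification: for a nonconstant meromorphic function, an $\alpha_{j}$-point of multiplicity $k$ produces, once $\varepsilon$ is taken small enough, a simply-connected and relatively compact component of $G^{-1}(D_{j}(\alpha_{j},\varepsilon))$ on which $G$ restricts to a proper map of degree $k$, that is, an island of multiplicity $k$; hence the presence of an $\alpha_{j}$-point of multiplicity less than $\nu_{j}$ is incompatible with the island-free hypothesis. Verifying this, and in particular the bookkeeping between the single $\varepsilon$ occurring in the statement and the possibly smaller disks needed to detect low-multiplicity $\alpha_{j}$-points, is the only genuine obstacle; it is purely function-theoretic and independent of the CMC-1 geometry, everything else being a direct application of results already established in Sections~2 and~4.
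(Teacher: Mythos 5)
Your proposal is correct and is exactly the derivation the paper intends (it gives no separate proof): apply the island result of Section 2 (Corollary \ref{cor-covering}, i.e.\ the ramification Corollary \ref{cor-ramification} plus the island--ramification dictionary) to the dual metric $ds^{2\sharp}=(1+|G|^{2})^{2}|Q/dG|^{2}$ with $m=2$, using Theorem \ref{thm-CMC-2} to transfer completeness, and conclude via Corollary \ref{thm-CMC-4} that $G$ constant means $f(\Sigma)$ is a horosphere. Your closing discussion of shrinking $\varepsilon$ so that a low-multiplicity $\alpha_j$-point yields a relatively compact, simply connected component on which $G$ is proper of that degree is in fact more careful than the paper's own one-line proof of Corollary \ref{cor-covering}.
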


The important case of Theorem \ref{thm-CMC-5} is the case where $q=9\,(=2\cdot 2+5)$ and ${\nu}_{j}=2$ for each $j$ 
$(j=1, \ldots, q)$. 

\begin{corollary}\label{thm-CMC-6}
Let $f\colon \Sigma \to {\H}^{3}$ be a complete CMC-1 surface. Let ${\alpha}_{1}, \ldots, {\alpha}_{9} \in \C\cup \{\infty \}$ be distinct and 
$D_{j}({\alpha}_{j}, \varepsilon):=\{z\in \C\cup \{\infty \} \,;\, |z, {\alpha}_{j}|< \varepsilon \}$ $(1\leq j\leq 9)$. 
Then there exists $\varepsilon > 0$ such that, if the hyperbolic Gauss map $G$ has no simple island of over any of the small disks 
$D_{j}({\alpha}_{j}, \varepsilon)$ for all $j\in \{1, \ldots , 9 \}$, then $G$ must be constant, that is, $f(\Sigma)$ is a horosphere. 
\end{corollary}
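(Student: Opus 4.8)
The plan is to derive Corollary \ref{thm-CMC-6} directly from Theorem \ref{thm-CMC-5} by the same specialization that produces the five islands theorem in the classical Ahlfors theory of covering surfaces. Concretely, I would set $q=9$ and $\nu_1=\cdots=\nu_9=2$ in Theorem \ref{thm-CMC-5}. With these choices the ramification quantity becomes
\[
\gamma=\sum_{j=1}^{9}\Bigl(1-\frac{1}{2}\Bigr)=\frac{9}{2}>4,
\]
so the hypothesis \eqref{equ-413-ramification} is satisfied, and Theorem \ref{thm-CMC-5} applies to any nine distinct points $\alpha_1,\ldots,\alpha_9\in\C\cup\{\infty\}$.

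Next I would unwind the notion of ``island of multiplicity less than $\nu_j=2$.'' An island has multiplicity a positive integer (the degree of the proper map $G|_U\colon U\to V$), so ``multiplicity less than $2$'' means ``multiplicity equal to $1$,'' i.e. a \emph{simple island} in the sense of Definition \ref{def-island}. Hence the hypothesis of Theorem \ref{thm-CMC-5} in this special case reads: $G$ has no simple island over any of the disks $D_j(\alpha_j,\varepsilon)$. This is precisely the hypothesis of Corollary \ref{thm-CMC-6}. Therefore Theorem \ref{thm-CMC-5} yields an $\varepsilon>0$ with the property that, if $G$ has no simple island over any $D_j(\alpha_j,\varepsilon)$, then $G$ is constant, i.e. $f(\Sigma)$ is a horosphere. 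One should also note that the pairwise disjointness of the $D_j(\alpha_j,\varepsilon)$ required in Theorem \ref{thm-CMC-5} is automatic once $\varepsilon$ is taken small enough, since the nine points $\alpha_j$ are distinct; so we simply shrink $\varepsilon$ further if necessary, which only makes the no-simple-island hypothesis weaker and hence keeps the conclusion valid.

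I do not anticipate a genuine obstacle here: the statement is a corollary obtained by plugging specific numerical values into the already-established Theorem \ref{thm-CMC-5}. The only point that deserves a line of care is the bookkeeping that ``no island of multiplicity less than $2$'' is literally the same as ``no simple island,'' together with the remark that $9=2\cdot 2+5$ is the smallest value of $q$ of the form $2m+5$ (with $m=2$) for which the argument runs, matching the classical Ahlfors five islands theorem when $m=0$. For completeness I would phrase the proof in one short paragraph invoking Theorem \ref{thm-CMC-5} with $q=9$ and $\nu_j=2$, observing $\gamma=9/2>4$, and identifying simple islands with islands of multiplicity $<2$.

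\begin{proof}
Apply Theorem \ref{thm-CMC-5} with $q=9$ and $\nu_1=\cdots=\nu_9=2$. Then
\[
\gamma=\sum_{j=1}^{9}\Bigl(1-\frac{1}{2}\Bigr)=\frac{9}{2}>4,
\]
so the inequality \eqref{equ-413-ramification} holds, and Theorem \ref{thm-CMC-5} provides an $\varepsilon>0$ (which we may shrink further so that the disks $D_j(\alpha_j,\varepsilon)$ are pairwise disjoint, using that $\alpha_1,\ldots,\alpha_9$ are distinct) such that, if $G$ has no island of multiplicity less than $2$ over any $D_j(\alpha_j,\varepsilon)$, then $G$ is constant and $f(\Sigma)$ is a horosphere. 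Since the multiplicity of an island is a positive integer, ``multiplicity less than $2$'' means ``multiplicity equal to $1$,'' that is, a simple island in the sense of Definition \ref{def-island}. Hence the hypothesis of the present corollary is exactly that $G$ has no simple island over any of the $D_j(\alpha_j,\varepsilon)$, and the conclusion follows.
\end{proof}
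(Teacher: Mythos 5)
Your proof is correct and is exactly the paper's (implicit) argument: Corollary \ref{thm-CMC-6} is simply Theorem \ref{thm-CMC-5} specialized to $q=9\,(=2\cdot 2+5)$ and ${\nu}_{j}=2$, where $\gamma = 9/2 > 4$ and an island of multiplicity less than $2$ is precisely a simple island. One small quibble: shrinking $\varepsilon$ preserves the conclusion not because the no-simple-island hypothesis becomes weaker but because it becomes stronger (a simple island over the larger disk restricts, via the degree-one proper map, to a simple island over the smaller disk), though this wording slip does not affect the validity of your argument.
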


Finally, by applying Theorem \ref{thm-unicity}, we provide the following unicity theorem for the hyperbolic Gauss maps of complete 
CMC-1 surfaces in ${\H}^{3}$. 

\begin{theorem}\label{thm-CMC-7}
Let $f\colon \Sigma \to {\H}^{3}$ and $\widehat{f}\colon \widehat{\Sigma}\to {\H}^{3}$ be two nonflat CMC-1 surfaces and 
assume that there exists a conformal diffeomorphism $\Psi\colon \Sigma \to \widehat{\Sigma}$. Let $G\colon \Sigma \to \C\cup\{\infty \}$ and 
$\widehat{G}\colon \widehat{\Sigma}\to \C\cup\{\infty \}$ be the hyperbolic Gauss maps of $f(\Sigma)$ and $\widehat{f}(\widehat{\Sigma})$, respectively. 
If $G\not\equiv \hat{G}\circ \Psi$ and either $f(\Sigma)$ or $\hat{f}(\widehat{\Sigma})$ is complete, 
then $G$ and $\hat{G}\circ \Psi$ share at most $6\,(=2+4)$ distinct values. 
\end{theorem}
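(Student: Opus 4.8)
The plan is to reduce Theorem \ref{thm-CMC-7} to Theorem \ref{thm-unicity} via the duality for CMC-1 surfaces, exactly in the way Theorem \ref{thm-CMC-3} and Corollary \ref{thm-CMC-4} were obtained by transplanting the results for the metric $ds^2$ to the \emph{dual} metric $ds^{2\sharp}$. First I would recall from \eqref{equ-CMC-4} and \eqref{equ-CMC-5} that the dual surface $f^{\sharp}$ has secondary Gauss map $g^{\sharp}=G$ and induced metric $ds^{2\sharp}=(1+|G|^{2})^{2}|\omega^{\sharp}|^{2}$, where $\omega^{\sharp}=-Q/dG$ is a holomorphic $1$-form on $\Sigma$ (the fact that $\omega^{\sharp}$ is genuinely holomorphic, not just meromorphic, is part of the standard theory and is what makes the metric have the shape required by \eqref{equ-metric1} with $m=2$). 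Thus on $\Sigma$ the hyperbolic Gauss map $G$ plays the role of ``$g$'' and $ds^{2\sharp}$ plays the role of ``$ds^2$'' in Section 2; likewise on $\widehat{\Sigma}$ the map $\widehat{G}$ and the dual metric $d\hat{s}^{2\sharp}=(1+|\widehat{G}|^{2})^{2}|\hat{\omega}^{\sharp}|^{2}$ play the roles of ``$\hat g$'' and ``$d\hat{s}^2$''.

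Next I would handle the completeness hypothesis. We are given that either $f(\Sigma)$ or $\widehat{f}(\widehat{\Sigma})$ is complete, i.e.\ either $ds^2$ or $d\hat{s}^2$ is complete; by Theorem \ref{thm-CMC-2}, completeness of $ds^2$ is equivalent to completeness of the dual metric $ds^{2\sharp}$, and similarly on $\widehat{\Sigma}$. Hence either $ds^{2\sharp}$ or $d\hat{s}^{2\sharp}$ is complete, which is precisely the completeness hypothesis needed in Theorem \ref{thm-unicity} (with $m=2$). The nonflatness of $f$ and $\widehat{f}$ guarantees that $G$ and $\widehat{G}$ are nonconstant: if $G$ were constant then, reading \eqref{equ-CMC-5}, the dual metric would be degenerate, forcing $ds^{2\sharp}$ and hence $ds^2$ to be degenerate, contradicting that $f$ is an immersion; equivalently a constant hyperbolic Gauss map corresponds to a horosphere, which is flat.

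Now suppose for contradiction that $G$ and $\widehat{G}\circ\Psi$ share $q\ge 7\,(=2+5)$ distinct values $\alpha_1,\dots,\alpha_q\in\C\cup\{\infty\}$, that is $G^{-1}(\alpha_j)=(\widehat{G}\circ\Psi)^{-1}(\alpha_j)$ for $1\le j\le q$, while $G\not\equiv\widehat{G}\circ\Psi$. Apply Theorem \ref{thm-unicity} with $m=2$ to the conformal diffeomorphism $\Psi\colon\Sigma\to\widehat{\Sigma}$, the metrics $ds^{2\sharp}$ on $\Sigma$ and $d\hat{s}^{2\sharp}$ on $\widehat{\Sigma}$, the nonconstant meromorphic functions $G$ and $\widehat{G}$, and the shared values $\alpha_1,\dots,\alpha_q$. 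Since $q\ge m+5=7$ and one of the two dual metrics is complete, Theorem \ref{thm-unicity} yields $G\equiv\widehat{G}\circ\Psi$, contradicting our assumption. Therefore $G$ and $\widehat{G}\circ\Psi$ can share at most $6\,(=2+4)$ distinct values, which is the assertion.

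I expect the only genuinely non-routine point to be the careful bookkeeping of what is holomorphic versus meromorphic: one must be sure that $\omega^{\sharp}=-Q/dG$ is a holomorphic $1$-form on all of $\Sigma$ (so that $ds^{2\sharp}$ has exactly the form \eqref{equ-metric1} with $m=2$, with no hidden poles at the zeros of $dG$), and that the shared-value hypothesis for $G$ on $\Sigma$ really does transfer to the ``$\hat g$'' of Theorem \ref{thm-unicity} under the identification $\hat g=\widehat{G}$ and the diffeomorphism $\Psi$. Both facts are contained in the duality theory of Umehara--Yamada and in Theorems \ref{thm-CMC-1}, \ref{thm-CMC-2} above; once they are invoked, the proof is a direct citation of Theorem \ref{thm-unicity}, entirely parallel to the derivation of Theorem \ref{thm-CMC-3} from Theorem \ref{thm-ramification}.
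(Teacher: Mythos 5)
Your proposal is correct and follows exactly the route the paper intends: apply Theorem \ref{thm-unicity} with $m=2$ to the hyperbolic Gauss maps via the dual metrics $ds^{2\sharp}=(1+|G|^{2})^{2}|Q/dG|^{2}$ (whose holomorphic form $\omega^{\sharp}=-Q/dG$ is supplied by the Umehara--Yamada duality), transferring the completeness hypothesis through Theorem \ref{thm-CMC-2} and using nonflatness to ensure $G$, $\widehat{G}$ are nonconstant.
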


\subsection{Lorentzian Gauss map of maxfaces in ${\R}^{3}_{1}$} 
Maximal surfaces in the  Lorentz-Minkowski 3-space ${\R}^{3}_{1}$ are closely related to minimal surfaces in ${\R}^{3}$. 
We treat maximal surfaces with some admissible singularities, called {\it maxfaces}, as introduced by Umehara and Yamada \cite{UY2006}. 
We remark that maxfaces, 
non-branched generalized maximal surfaces in the sense of \cite{ER1992} and non-branched generalized maximal maps in the sense of \cite{IK2008} are 
all the same class of maximal surfaces. The Lorentz-Minkowski $3$-space ${\R}^{3}_{1}$ is the affine $3$-space ${\R}^{3}$ with the inner product 
$$
\langle \, , \, \rangle = -(dx^{1})^{2}+(dx^{2})^{2}+(dx^{3})^{2},
$$
where $(x^{1}, x^{2}, x^{3})$ is the canonical coordinate system of ${\R}^{3}$. We consider a fibration 
$$
p_{L}\colon {\C}^{3} \ni ({\zeta}^{1}, {\zeta}^{2}, {\zeta}^{3}) \mapsto \text{Re} (-\sqrt{-1}{\zeta}^{1}, {\zeta}^{2}, {\zeta}^{3}) \in {\R}^{3}_{1}. 
$$
The projection of null holomorphic immersions into  ${\R}^{3}_{1}$ by $p_{L}$ gives maxfaces. Here, a holomorphic map $F=(F_{1}, F_{2}, F_{3})\colon \Sigma \to {\C}^{3}$ 
is said to be {\it null} if $\{(F_{1})'_{z}\}^{2}+\{(F_{2})'_{z}\}^{2}+\{(F_{3})'_{z}\}^{2}$ vanishes identically, where $'=d / dz$ denotes the derivative with respect to 
a local complex coordinate $z$ of $\Sigma$. For a maxface, an analogue of the Enneper-Weierstrass representation formula is known (see also \cite{Ko1983}). 

\begin{theorem}[{\cite[Theorem 2.6]{UY2006}}]\label{max-EW}
Let $\Sigma$ be a Riemann surface and $(g, \omega)$ a pair consisting of a meromorphic function and a holomorphic $1$-form on $\Sigma$ such that 
\begin{equation}\label{max-nullmet}
d{\sigma}^{2}:= (1+|g|^{2})^{2}|\omega|^{2}
\end{equation}
gives a (positive definite) Riemannian metric on $\Sigma$, and $|g|$ is not identically $1$. Assume that 
$$
\text{Re} \int_{\gamma} (-2g, 1+g^{2}, \sqrt{-1}(1-g^{2}))\, \omega = 0
$$
for all loops $\gamma$ in $\Sigma$. Then 
\begin{equation}\label{max-immer}
f= \text{Re} \int^{z}_{z_{0}} (-2g, 1+g^{2}, \sqrt{-1}(1-g^{2}))\, \omega
\end{equation}
is well-defined on $\Sigma$ and gives a maxface in ${\R}^{3}_{1}$, where $z_{0}\in {\Sigma}$ is a base point. Moreover, all maxfaces are obtained 
in this manner. The induced metric $ds^{2}:=f^{\ast} \langle \, , \, \rangle$ is given by
$$
ds^{2} = (1-|g|^{2})^{2}|\omega|^{2}, 
$$
and the point $p\in \Sigma$ is a singular point of $f$ if and only if $|g(p)|=1$. 
\end{theorem}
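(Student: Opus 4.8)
The plan is to run the Lorentzian analogue of the Enneper--Weierstrass machinery already used in Section 4.1 for minimal surfaces and, in null-holomorphic form, in Theorem \ref{thm-CMC-1} for CMC-$1$ surfaces; the only genuinely new ingredient is bookkeeping with the signature of $\langle\,,\,\rangle=-(dx^{1})^{2}+(dx^{2})^{2}+(dx^{3})^{2}$ and with the fibration $p_{L}$. For the \emph{sufficiency} part, starting from $(g,\omega)$ with $d\sigma^{2}=(1+|g|^{2})^{2}|\omega|^{2}$ positive definite I would first note that
$$
\Phi:=\bigl(-2g,\,1+g^{2},\,\sqrt{-1}(1-g^{2})\bigr)\,\omega
$$
is a globally defined $\C^{3}$-valued holomorphic $1$-form on $\Sigma$: at a pole of $g$ of order $k$ the form $\omega$ has a zero of order $2k$, so $g\omega$ and $g^{2}\omega$ extend holomorphically. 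The two key algebraic facts are the Lorentzian isotropy relation
$$
-(-2g)^{2}+(1+g^{2})^{2}+\bigl(\sqrt{-1}(1-g^{2})\bigr)^{2}=-4g^{2}+\bigl[(1+g^{2})^{2}-(1-g^{2})^{2}\bigr]=0
$$
and, using $|1+g^{2}|^{2}+|1-g^{2}|^{2}=2(1+|g|^{4})$,
$$
-|{-2g}|^{2}+|1+g^{2}|^{2}+|\sqrt{-1}(1-g^{2})|^{2}=2(1-|g|^{2})^{2}.
$$

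Granting the period hypothesis $\mathrm{Re}\int_{\gamma}\Phi=0$, the map $f=\mathrm{Re}\int^{z}_{z_{0}}\Phi$ is well defined on $\Sigma$; the isotropy relation says it is conformal with respect to $\langle\,,\,\rangle$ (spacelike where nondegenerate); and the second identity gives $ds^{2}=f^{\ast}\langle\,,\,\rangle=(1-|g|^{2})^{2}|\omega|^{2}$, so that $p$ is a singular point exactly when $|g(p)|=1$. To match the definition of a \emph{maxface} in \cite{UY2006} I would then exhibit $f$ as the $p_{L}$-projection of a null holomorphic immersion: passing to the universal cover $\widetilde{\Sigma}$, set $F_{L}:=\int^{z}_{z_{0}}\widetilde{\Phi}$, where $\widetilde{\Phi}$ is obtained from $\Phi$ by inserting the factor dictated by $p_{L}(\zeta^{1},\zeta^{2},\zeta^{3})=\mathrm{Re}(-\sqrt{-1}\zeta^{1},\zeta^{2},\zeta^{3})$ in the first slot (explicitly $\widetilde{\Phi}=(-2\sqrt{-1}g,\,1+g^{2},\,\sqrt{-1}(1-g^{2}))\omega$), so that $p_{L}\circ F_{L}=f$ descends to $\Sigma$ by the period condition. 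One checks $\{(F_{L})'_{1}\}^{2}+\{(F_{L})'_{2}\}^{2}+\{(F_{L})'_{3}\}^{2}\equiv0$, i.e.\ $F_{L}$ is null, and that $|F_{L}'|^{2}$ equals, up to a positive constant, $(1+|g|^{2})^{2}|\omega|^{2}$, so $F_{L}$ is an immersion exactly because $d\sigma^{2}$ is Riemannian; since $|g|\not\equiv 1$ the singular set $\{|g|=1\}$ is nowhere dense, and $f$ is a maxface with the asserted first fundamental form and singular set.

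For the \emph{necessity} part, let $f$ be a maxface. Then $f=p_{L}\circ F_{L}$ for a null holomorphic immersion $F_{L}$ on $\widetilde{\Sigma}$; equivalently, in an isothermal coordinate the $\C^{3}$-valued $1$-form $\Phi=(\phi_{1},\phi_{2},\phi_{3})$ with $\mathrm{Re}\,\Phi=df$ is holomorphic and satisfies $-\phi_{1}^{2}+\phi_{2}^{2}+\phi_{3}^{2}\equiv 0$. Put
$$
\omega:=\tfrac12(\phi_{2}-\sqrt{-1}\,\phi_{3}),\qquad g:=\frac{-\phi_{1}}{\phi_{2}-\sqrt{-1}\,\phi_{3}};
$$
the null relation yields $\phi_{2}+\sqrt{-1}\phi_{3}=2g^{2}\omega$, hence $\Phi=(-2g,1+g^{2},\sqrt{-1}(1-g^{2}))\omega$, so $g$ and $\omega$ are coordinate-independent and define a meromorphic function and a holomorphic $1$-form on all of $\Sigma$. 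That $d\sigma^{2}=(1+|g|^{2})^{2}|\omega|^{2}$ is Riemannian follows from $F_{L}$ being an immersion, $|g|\not\equiv 1$ because $f$ is a maxface rather than a degenerate map, and the period condition is automatic since $f$ is already single valued; the formula for $ds^{2}$ is the same computation as above.

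The algebra is routine; the real work is the interface with the precise notion of \emph{maxface} in \cite{UY2006}. Two points need care. First, one must know (or reprove) that ``$f$ is a maxface'' is equivalent to ``$f=p_{L}\circ F_{L}$ for a null holomorphic immersion $F_{L}$'', so that in the sufficiency part ``$d\sigma^{2}$ Riemannian'' is exactly the nondegeneracy producing a genuine maxface --- controlling, in particular, the frontal behaviour of $f$ along the singular curve $\{|g|=1\}$ --- and in the necessity part the lift $F_{L}$ is actually available. Second, on a non-simply-connected $\Sigma$ the lift $F_{L}$ lives only on $\widetilde{\Sigma}$, so one must check that $(g,\omega)$ and the induced metric descend to $\Sigma$; this is where single-valuedness of $f$ (resp.\ the period hypothesis) enters. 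Everything else is the Lorentzian bookkeeping of the classical representation and parallels Theorem \ref{thm-CMC-1}.
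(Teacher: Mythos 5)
The paper does not actually prove this statement: it is quoted verbatim from \cite[Theorem 2.6]{UY2006} and used as a black box in Section 4.3, so there is no in-paper argument to compare yours against. Your reconstruction is essentially the standard proof of the Weierstrass-type representation for maxfaces from that source, and the substantive computations are correct: the Lorentzian isotropy relation, the identity $-|-2g|^{2}+|1+g^{2}|^{2}+|\sqrt{-1}(1-g^{2})|^{2}=2(1-|g|^{2})^{2}$ giving $ds^{2}=(1-|g|^{2})^{2}|\omega|^{2}$ and the singular set $\{|g|=1\}$, the lift $\widetilde{\Phi}=(-2\sqrt{-1}g,\,1+g^{2},\,\sqrt{-1}(1-g^{2}))\,\omega$ which is null in the sense $\sum\{(F_{i})'_{z}\}^{2}\equiv 0$ and has $|F_{L}'|^{2}$ proportional to $(1+|g|^{2})^{2}|\hat{\omega}_{z}|^{2}$ (consistent with the paper's remark that $(1/2)d\sigma^{2}$ is the pull-back metric of the lift), and the recovery $\omega=\tfrac12(\phi_{2}-\sqrt{-1}\phi_{3})$, $g=-\phi_{1}/(\phi_{2}-\sqrt{-1}\phi_{3})$ in the converse. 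Two small points: in the necessity direction your formulas tacitly assume $\phi_{2}-\sqrt{-1}\phi_{3}\not\equiv 0$; the excluded case forces $\phi_{1}\equiv 0$ by nullity, i.e.\ a spacelike plane, which must be handled by a separate remark or a rigid motion of ${\R}^{3}_{1}$ (exactly as in the minimal-surface case when the Gauss map is identically $\infty$). And, as you yourself flag, the equivalence ``maxface $\Leftrightarrow$ $p_{L}$-projection of a null holomorphic immersion with $|g|\not\equiv 1$'' is essentially the definition adopted in \cite{UY2006} (and in this paper's Section 4.3), so that interface is a matter of citing the definition rather than a gap in your argument.
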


We call $g$ the {\it Lorentzian Gauss map} of $f$. If $f$ has no singularities, then $g$ coincides with the composition of the 
Gauss map (i.e. (Lorentzian) unit normal vector) $n \colon \Sigma \to {\H}^{2}_{\pm}$ into the upper or lower connected component of 
the two-sheet hyperboloid ${\H}^{2}_{\pm}={\H}^{2}_{+}\cup {\H}^{2}_{-}$ in ${\R}^{3}_{1}$, where 
\begin{eqnarray}
{\H}^{2}_{+} &:=& \{n=(n^{1}, n^{2}, n^{3})\in {\R}^{3}_{1}\, ; \, \langle n, n \rangle = -1, n^{1}> 0\}, \nonumber \\ 
{\H}^{2}_{-} &:=& \{n=(n^{1}, n^{2}, n^{3})\in {\R}^{3}_{1}\, ; \, \langle n, n \rangle = -1, n^{1}< 0\},   \nonumber
\end{eqnarray}
and the stereographic projection from the north pole $(1, 0, 0)$ of the hyperboloid onto the Riemann sphere $\C\cup \{\infty\}$ 
(see \cite[Section 1]{UY2006} for the details). 
A maxface is said to be {\it weakly complete} if the metric $d{\sigma}^{2}$ as in (\ref{max-nullmet}) is complete. 
We note that $(1/2)d{\sigma}^{2}$ coincides with 
the pull-back of the standard metric on ${\C}^{3}$ by the null holomorphic immersion of $f$ (see \cite[Section 2]{UY2006}). 

Applying Theorem \ref{thm-ramification} to the metric $d{\sigma}^{2}$, we can get the following curvature estimate. 

\begin{theorem}\label{thm-4-maximal1}
Let $f\colon \Sigma \to {\R}^{3}_{1}$ be a maxface. 
Let $q\in \N$, ${\alpha}_{1}, \ldots, {\alpha}_{q}\in \C\cup \{\infty \}$ be distinct and ${\nu}_{1}, \ldots, {\nu}_{q}\in \N\cup \{\infty \}$. 
Suppose that the inequality (\ref{equ-413-ramification}) holds. 
If the Lorentzian Gauss map $g\colon \Sigma \to \C\cup \{\infty \}$ satisfies the property that 
all ${\alpha}_{j}$-points of $g$ have multiplicity at least ${\nu}_{j}$, then there exists a positive constant $C$, depending on 
${\alpha}_{1}, \ldots, {\alpha}_{q}$, but not the surface, such that for all $p\in \Sigma$ we have
$$
|K_{d{\sigma}^{2}}(p)|^{1/2}\leq \dfrac{C}{d(p)}, 
$$
where $K_{d{\sigma}^{2}}(p)$ is the Gaussian curvature of the metric $d{\sigma}^{2}$ at $p$ and $d(p)$ is the geodesic distance from $p$ to 
the boundary of $\Sigma$. 
\end{theorem}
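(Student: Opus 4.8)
The plan is to obtain Theorem \ref{thm-4-maximal1} as an immediate corollary of Theorem \ref{thm-ramification}, so that no genuinely new argument is required. First I would set up the Weierstrass data: by Theorem \ref{max-EW}, a maxface $f\colon \Sigma\to {\R}^{3}_{1}$ is represented by a pair $(g,\omega)$ in which $g$, the Lorentzian Gauss map, is a meromorphic function on $\Sigma$, $\omega$ is a holomorphic $1$-form, and the ``null metric''
\[
d{\sigma}^{2}=(1+|g|^{2})^{2}|\omega|^{2}
\]
of (\ref{max-nullmet}) is a positive definite Riemannian metric. Here we regard $\Sigma$ as an open Riemann surface, which is the only case in which the estimate has content. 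The point is that $d{\sigma}^{2}$ is exactly a conformal metric of the form (\ref{equ-metric}) with $m=2$; it is crucial to work with $d{\sigma}^{2}$ rather than with the possibly degenerate induced Lorentzian metric $ds^{2}=(1-|g|^{2})^{2}|\omega|^{2}$, since $d{\sigma}^{2}$ --- which is, up to the constant factor $1/2$, the pull-back of the standard metric of ${\C}^{3}$ by the null holomorphic lift of $f$ --- is the Riemannian structure relative to which the boundary distance $d(p)$ in the statement is measured.

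Next I would verify that the hypotheses of Theorem \ref{thm-ramification} are met. With $m=2$, the ramification inequality (\ref{equ-413-ramification}), i.e. $\gamma=\sum_{j=1}^{q}(1-1/{\nu}_{j})>4=m+2$, is precisely condition (\ref{equ-ramification}); and the assumption that every ${\alpha}_{j}$-point of $g$ has multiplicity at least ${\nu}_{j}$ is literally the hypothesis on $g$ in that theorem. Therefore Theorem \ref{thm-ramification} applies to the open Riemann surface $(\Sigma,d{\sigma}^{2})$ and yields a positive constant $C$, depending only on $m=2$, $\gamma$ and ${\alpha}_{1},\ldots,{\alpha}_{q}$ --- hence not on the surface $f$ --- with $|K_{d{\sigma}^{2}}(p)|^{1/2}\leq C/d(p)$ for all $p\in\Sigma$. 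As a consistency check, (\ref{equ-Gaussian}) with $m=2$ gives $K_{d{\sigma}^{2}}=-\dfrac{4|g'_{z}|^{2}}{(1+|g|^{2})^{4}|\hat{\omega}_{z}|^{2}}$, which is exactly the quantity estimated in the course of proving Theorem \ref{thm-ramification}.

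I do not anticipate any real obstacle: all of the analytic work --- constructing the auxiliary flat metric of (\ref{equ-proof-213}), invoking Fujimoto's estimate (Lemma \ref{main-lem1}) and the local-isometry lemma (Lemma \ref{main-lem2}), and running the divergence argument --- is already done inside the proof of Theorem \ref{thm-ramification}. The only substantive matters are the two observations above: the exponent is $m=2$, and the correct conformal metric to feed into the theorem is the null metric $d{\sigma}^{2}$. Once these are recorded, the asserted curvature estimate follows at once.
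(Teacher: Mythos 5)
Your proposal is correct and is exactly the paper's argument: the paper obtains Theorem \ref{thm-4-maximal1} by applying Theorem \ref{thm-ramification} to the null metric $d\sigma^{2}=(1+|g|^{2})^{2}|\omega|^{2}$, i.e.\ the case $m=2$ of (\ref{equ-metric}), with the ramification hypothesis (\ref{equ-413-ramification}) being (\ref{equ-ramification}) for $m=2$. Your remarks on using $d\sigma^{2}$ rather than the possibly degenerate induced metric, and on where the boundary distance is measured, are consistent with the paper's treatment.
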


As a corollary of Theorem \ref{thm-4-maximal1}, 
we have the following ramification theorem for the Lorentzian Gauss map of a weakly complete maxface in ${\R}^{3}_{1}$.

\begin{corollary}\label{thm-4-maximal2}
Let $f\colon \Sigma \to {\R}^{3}_{1}$ be a weakly complete maxface. 
Let $q\in \N$, ${\alpha}_{1}, \ldots, {\alpha}_{q}\in \C\cup \{\infty \}$ be distinct and ${\nu}_{1}, \ldots, {\nu}_{q}\in \N\cup \{\infty \}$. 
Suppose that the inequality (\ref{equ-413-ramification}) holds. If the Lorentzian Gauss map $g\colon \Sigma \to \C\cup \{\infty \}$ satisfies 
the property that all ${\alpha}_{j}$-points of $g$ have multiplicity at least ${\nu}_{j}$, then $g$ must be constant, that is, $f(\Sigma)$ is a plane. 
In particular, the Lorentzian Gauss map of a nonflat weakly complete maxface in ${\R}^{3}_{1}$ can omit at most $4\,(=2+2)$ values. 
\end{corollary}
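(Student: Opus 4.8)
The plan is to deduce Corollary \ref{thm-4-maximal2} from Theorem \ref{thm-4-maximal1} in exactly the way Corollary \ref{thm-4-minimal2} follows from Theorem \ref{thm-4-minimal1}. The crucial point is that the notion of \emph{weak completeness} is designed so that the auxiliary metric $d{\sigma}^{2}=(1+|g|^{2})^{2}|\omega|^{2}$ appearing in Theorem \ref{max-EW}---which, unlike the induced metric $ds^{2}=(1-|g|^{2})^{2}|\omega|^{2}$, is a genuine positive definite Riemannian metric on all of $\Sigma$---is complete. This $d{\sigma}^{2}$ is precisely a conformal metric of the form (\ref{equ-metric}) with $m=2$, and the hypothesis $\gamma>4\,(=2+2)$ in (\ref{equ-413-ramification}) is exactly the hypothesis (\ref{equ-ramification}) in that case, so all the machinery of Section 2 applies.

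First I would apply Theorem \ref{thm-4-maximal1} together with the weak completeness of $f$. Since $d{\sigma}^{2}$ is complete, every divergent curve has infinite length, so $d(p)=\infty$ for all $p\in\Sigma$, and the curvature estimate $|K_{d{\sigma}^{2}}(p)|^{1/2}\le C/d(p)$ forces $K_{d{\sigma}^{2}}\equiv 0$ on $\Sigma$. By the Gaussian curvature formula for this type of metric (the case $m=2$ of (\ref{equ-Gaussian})), namely $K_{d{\sigma}^{2}}=-4|g'_{z}|^{2}/\bigl((1+|g|^{2})^{4}|\hat{\omega}_{z}|^{2}\bigr)$ with $\omega=\hat{\omega}_{z}\,dz$, this vanishing holds if and only if $g'_{z}\equiv 0$, i.e. $g$ is constant. (Equivalently, one may invoke Corollary \ref{cor-ramification} directly with $m=2$, since the ramification hypothesis on the $\alpha_{j}$-points is assumed.)

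Next I would interpret $g$ constant geometrically. If $g\equiv c$, then the representation formula (\ref{max-immer}) gives $f=\mathrm{Re}\int_{z_{0}}^{z}\mathbf{v}\,\omega$ with the fixed vector $\mathbf{v}=(-2c,\,1+c^{2},\,\sqrt{-1}(1-c^{2}))\in{\C}^{3}$, so $f(\Sigma)$ is contained in the affine plane spanned by $\mathrm{Re}\,\mathbf{v}$ and $\mathrm{Im}\,\mathbf{v}$; thus $f(\Sigma)$ is a plane. Finally, for the last assertion, if the Lorentzian Gauss map $g$ of a nonflat weakly complete maxface omitted five distinct values $\alpha_{1},\dots,\alpha_{5}$, I would set $\nu_{1}=\dots=\nu_{5}=\infty$ in (\ref{equ-413-ramification}); then $\gamma=5>4$ and the ramification condition holds vacuously, so by the two steps above $f(\Sigma)$ is a plane, contradicting nonflatness. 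Hence $g$ omits at most $4$ values.

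There is no serious obstacle here: once Theorem \ref{thm-4-maximal1} is in hand, the corollary is a direct deduction. The only point requiring care is to carry out the whole argument with the null-lift metric $d{\sigma}^{2}$ rather than the induced metric $ds^{2}$, because a maxface is singular exactly where $|g|=1$ and there $ds^{2}$ degenerates; only $d{\sigma}^{2}$ qualifies as a complete conformal metric of the form (\ref{equ-metric}) to which Theorem \ref{thm-ramification} and its corollaries (hence Theorem \ref{thm-4-maximal1}) apply.
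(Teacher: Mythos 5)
Your proposal is correct and follows exactly the paper's intended deduction: weak completeness means the metric $d{\sigma}^{2}=(1+|g|^{2})^{2}|\omega|^{2}$ (the $m=2$ case of (\ref{equ-metric})) is complete, so the curvature bound of Theorem \ref{thm-4-maximal1} with $d(p)=\infty$ forces $K_{d\sigma^{2}}\equiv 0$, hence $g$ constant (equivalently, invoke Corollary \ref{cor-ramification} with $m=2$), and the plane conclusion and the ``at most $4$ omitted values'' statement follow from the representation formula and the choice ${\nu}_{j}=\infty$, just as in Corollaries \ref{thm-4-minimal2} and \ref{cor-exceptional}. Your emphasis on using $d\sigma^{2}$ rather than the degenerate induced metric $ds^{2}=(1-|g|^{2})^{2}|\omega|^{2}$ is precisely the point the paper relies on.
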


As an application of Corollary \ref{thm-4-maximal2}, we can give a simple proof of the Calabi-Bernstein theorem (\cite{Ca1970}, \cite{CY1976}) 
for a maximal space-like surface in ${\R}^{3}_{1}$ from the viewpoint of function-theoretic properties of the Lorenzian Gauss map. 
For the details, see \cite[Section 4.2]{Ka2013}. 

As another application of Corollary \ref{thm-4-maximal2}, we obtain the following analogue to the Ahlfors islands theorem. 

\begin{theorem}\label{thm-4-maximal3}
Let $f\colon \Sigma \to {\R}^{3}_{1}$ be a weakly complete maxface. 
Let $q\in \N$, ${\alpha}_{1}, \ldots, {\alpha}_{q}\in \C\cup \{\infty \}$ be distinct, 
$D_{j}({\alpha}_{j}, \varepsilon):=\{z\in \C\cup \{\infty \} \,;\, |z, {\alpha}_{j}|< \varepsilon \}$ $(1\leq j\leq q)$ be pairwise disjoint and 
${\nu}_{1}, \ldots, {\nu}_{q}\in \N$. Suppose that the inequality (\ref{equ-413-ramification}) holds. 
Then there exists $\varepsilon > 0$ such that, if the Lorentzian Gauss map $g$ has no island of multiplicity less than ${\nu}_{j}$ over $D_{j}({\alpha}_{j}, \varepsilon)$ 
for all $j\in \{1, \ldots , q\}$, then $g$ must be constant, that is, $f(\Sigma)$ is a plane. 
\end{theorem}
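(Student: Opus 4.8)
The plan is to deduce the theorem from Corollary~\ref{cor-covering} by translating the geometric hypothesis into the analytic one through the Weierstrass-type representation of Theorem~\ref{max-EW}. By that theorem, the maxface $f$ is built from Weierstrass data $(g,\omega)$ on $\Sigma$ with $g$ its Lorentzian Gauss map, and the lift metric~(\ref{max-nullmet}) is $d{\sigma}^{2}=(1+|g|^{2})^{2}|\omega|^{2}$; by definition $f$ is weakly complete precisely when $d{\sigma}^{2}$ is complete. Hence $g$ is nothing but a meromorphic function on an open Riemann surface carrying a complete conformal metric of the form~(\ref{equ-metric}) with $m=2$, so that $m+2=4$ and the standing hypothesis~(\ref{equ-413-ramification}) is exactly~(\ref{equ-ramification}) for $m=2$.

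First I would dispose of the case $g$ constant, in which the conclusion is immediate and $f(\Sigma)$ is a plane, as one sees directly from~(\ref{max-immer}); so assume $g$ nonconstant. Then I would apply Corollary~\ref{cor-covering} to $\Sigma$ equipped with the complete metric $d{\sigma}^{2}$, with the given distinct points ${\alpha}_{1},\dots,{\alpha}_{q}$ and integers ${\nu}_{1},\dots,{\nu}_{q}\in\N$ satisfying~(\ref{equ-ramification}) with $m=2$. This produces an $\varepsilon>0$ depending only on $m=2$, $\gamma$ and ${\alpha}_{1},\dots,{\alpha}_{q}$, and in particular not on the maxface, with the property that any nonconstant meromorphic function on an open Riemann surface carrying such a complete metric must have an island of multiplicity less than ${\nu}_{j}$ over some $D_{j}({\alpha}_{j},\varepsilon)$. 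Applying this to our $g$, which by hypothesis admits no such island, forces $g$ to be constant, a contradiction. Equivalently, running the contradiction exactly as in the proof of Corollary~\ref{cor-covering}: if no such $\varepsilon$ existed, then for each $n$ there would be a weakly complete maxface whose Lorentzian Gauss map has no island of multiplicity $<{\nu}_{j}$ over $D_{j}({\alpha}_{j},1/n)$, whence every ${\alpha}_{j}$-point of that Gauss map would have multiplicity at least ${\nu}_{j}$, contradicting Corollary~\ref{thm-4-maximal2}.

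I do not expect any genuine obstacle; the argument is a direct transcription of the scheme already used for Corollary~\ref{cor-covering} and Theorem~\ref{thm-4-minimal3}. The only step deserving a word of care is the implication that having no island of multiplicity $<{\nu}_{j}$ over $D_{j}({\alpha}_{j},\varepsilon)$ for all sufficiently small $\varepsilon$ forces every ${\alpha}_{j}$-point of $g$ to have multiplicity at least ${\nu}_{j}$: this holds because a zero of $g-{\alpha}_{j}$ of finite order $k$ produces, for every sufficiently small $\varepsilon$, a relatively compact simply-connected component of $g^{-1}(D_{j}({\alpha}_{j},\varepsilon))$ on which $g$ restricts to a proper map of degree $k$, that is, a genuine island of multiplicity $k$. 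The assertion that $f(\Sigma)$ is a plane then follows as in the constant case.
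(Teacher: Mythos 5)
Your proposal is correct and follows essentially the same route as the paper: the paper obtains Theorem \ref{thm-4-maximal3} as a direct application of the general island result (Corollary \ref{cor-covering}, equivalently the contradiction argument against the ramification Corollary \ref{thm-4-maximal2}) applied with $m=2$ to the complete lift metric $d\sigma^{2}=(1+|g|^{2})^{2}|\omega|^{2}$ furnished by Theorem \ref{max-EW} and the definition of weak completeness. Your added remark that a finite-order ${\alpha}_{j}$-point yields, for small $\varepsilon$, a genuine island of the same multiplicity is exactly the point implicit in the paper's proof of Corollary \ref{cor-covering}.
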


The important case of Theorem \ref{thm-4-maximal3} is the case where $q=9\,(=2\cdot 2+5)$ and ${\nu}_{j}=2$ for each $j$ 
$(j=1,\ldots, q)$. 

\begin{corollary}\label{thm-4-maximal4}
Let $f\colon \Sigma \to {\R}^{3}_{1}$ be a weakly complete maxface. Let ${\alpha}_{1}, \ldots, {\alpha}_{9} \in \C\cup \{\infty \}$ be distinct and 
$D_{j}({\alpha}_{j}, \varepsilon):=\{z\in \C\cup \{\infty \} \,;\, |z, {\alpha}_{j}|< \varepsilon \}$ $(1\leq j\leq 9)$. 
Then there exists $\varepsilon > 0$ such that, if the Lorentzian Gauss map $g$ has no simple island of over any of the small disks 
$D_{j}({\alpha}_{j}, \varepsilon)$ for all $j\in \{1, \ldots , 9 \}$, then $g$ must be constant, that is, $f(\Sigma)$ is a plane. 
\end{corollary}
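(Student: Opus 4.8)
The plan is to obtain Corollary~\ref{thm-4-maximal4} as the special case $q=9$, $\nu_1=\cdots=\nu_9=2$ of Theorem~\ref{thm-4-maximal3}. First I would verify the numerology: for these values
$$
\gamma=\sum_{j=1}^{9}\biggl(1-\dfrac{1}{2}\biggr)=\dfrac{9}{2}>4=2+2,
$$
so the hypothesis~\eqref{equ-413-ramification} of Theorem~\ref{thm-4-maximal3} is satisfied. Next I would observe that, by Definition~\ref{def-island}, a \emph{simple} island over a Jordan domain is exactly an island of multiplicity one, hence an island of multiplicity less than $\nu_j=2$; therefore the assumption that the Lorentzian Gauss map $g$ has no simple island over any $D_j({\alpha}_j,\varepsilon)$ is literally the assumption appearing in Theorem~\ref{thm-4-maximal3}. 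Since the $\alpha_j$ are pairwise distinct, the disks $D_j({\alpha}_j,\varepsilon)$ are automatically pairwise disjoint once $\varepsilon<\dfrac{1}{2}\min_{i<j}|{\alpha}_i,{\alpha}_j|$, so the remaining hypothesis of that theorem is met as well. Applying Theorem~\ref{thm-4-maximal3} with the $\varepsilon>0$ it produces, I conclude that $g$ is constant, and then the Weierstrass-type representation (Theorem~\ref{max-EW}) forces $f(\Sigma)$ to lie in an affine plane.

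For completeness I would also recall why Theorem~\ref{thm-4-maximal3} itself holds, since this is where the actual content resides. Arguing by contradiction, if no such $\varepsilon$ existed then for every $\varepsilon>0$ there would be a weakly complete maxface whose Lorentzian Gauss map $g$ is nonconstant and has no island of multiplicity less than $\nu_j$ over $D_j({\alpha}_j,\varepsilon)$. By Theorem~\ref{max-EW} the conformal metric $d{\sigma}^2=(1+|g|^2)^2|\omega|^2$ is complete, so, taking $m=2$, Corollary~\ref{cor-covering} provides a \emph{single} $\varepsilon>0$ for which every such metric forces $g$ to be constant --- contradicting the family just constructed. Thus Theorem~\ref{thm-4-maximal3}, and with it Corollary~\ref{thm-4-maximal4}, follows; and Corollary~\ref{cor-covering} in turn rests on Corollary~\ref{cor-ramification}, hence ultimately on the curvature bound of Theorem~\ref{thm-ramification}.

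The hard part is not located here at all: the entire analytic difficulty has already been packaged into Theorem~\ref{thm-ramification} and its corollaries, and what remains for Corollary~\ref{thm-4-maximal4} is pure bookkeeping --- checking that $9=2\cdot 2+5$ yields $\gamma=9/2>m+2=4$, identifying \emph{simple island} with \emph{island of multiplicity less than $2$}, and invoking the Weierstrass representation to pass from ``$g$ constant'' to ``$f(\Sigma)$ planar''. The only point that warrants a word of care is the disjointness of the small disks, which, as noted, is automatic for $\varepsilon$ sufficiently small precisely because the $\alpha_j$ are pairwise distinct.
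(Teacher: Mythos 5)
Your proposal is correct and follows exactly the route the paper intends: Corollary~\ref{thm-4-maximal4} is the case $q=9=2\cdot 2+5$, $\nu_j=2$ of Theorem~\ref{thm-4-maximal3}, which in turn is Corollary~\ref{cor-covering} (with $m=2$) applied to the complete metric $d\sigma^{2}=(1+|g|^{2})^{2}|\omega|^{2}$ of a weakly complete maxface. The numerology check $\gamma=9/2>4$, the identification of a simple island with an island of multiplicity less than $2$, and the remark on disjointness of the disks for small $\varepsilon$ are all in order.
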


Finally, by applying Theorem \ref{thm-unicity}, we provide the following unicity theorem for the Lorentzian Gauss maps of weakly complete 
maxfaces in ${\R}^{3}_{1}$. 

\begin{theorem}\label{thm-4-maximal5}
Let $f\colon \Sigma \to {\R}^{3}_{1}$ and $\widehat{f}\colon \widehat{\Sigma}\to {\R}^{3}_{1}$ be two nonflat maxfaces and 
assume that there exists a conformal diffeomorphism $\Psi\colon \Sigma \to \widehat{\Sigma}$. Let $g\colon \Sigma \to \C\cup\{\infty \}$ and 
$\hat{g}\colon \widehat{\Sigma}\to \C\cup\{\infty \}$ be the Lorentzian Gauss maps of $f(\Sigma)$ and $\widehat{f}(\widehat{\Sigma})$, respectively. 
If $g\not\equiv \hat{g}\circ \Psi$ and either $f(\Sigma)$ or $\widehat{f}(\widehat{\Sigma})$ is weakly complete, 
then $g$ and $\hat{g}\circ \Psi$ share at most $6\,(=2+4)$ distinct values. 
\end{theorem}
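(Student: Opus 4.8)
The plan is to reduce the statement directly to Theorem~\ref{thm-unicity} with $m=2$. First I would recall from Theorem~\ref{max-EW} that for each maxface $f\colon\Sigma\to{\R}^{3}_{1}$ there is Weierstrass data $(g,\omega)$ on $\Sigma$, with $g$ the Lorentzian Gauss map and $\omega$ a holomorphic $1$-form, such that the lift metric $d{\sigma}^{2}=(1+|g|^{2})^{2}|\omega|^{2}$ is a (positive definite) Riemannian metric; likewise $\widehat{f}$ gives data $(\hat{g},\hat{\omega})$ on $\widehat{\Sigma}$ with $d{\hat{\sigma}}^{2}=(1+|\hat{g}|^{2})^{2}|\hat{\omega}|^{2}$. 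These are exactly the metrics appearing in \eqref{equ-metric1} and \eqref{equ-metric2} with the exponent $m=2$. By hypothesis there is a conformal diffeomorphism $\Psi\colon\Sigma\to\widehat{\Sigma}$, and $g$, $\hat{g}\circ\Psi$ are nonconstant since the surfaces are nonflat (a constant Lorentzian Gauss map forces the image to be a plane).

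Second, I would argue by contradiction: suppose $g\not\equiv\hat{g}\circ\Psi$ yet $g$ and $\hat{g}\circ\Psi$ share $q\geq 7\,(=2+5)$ distinct values ${\alpha}_{1},\ldots,{\alpha}_{q}\in\C\cup\{\infty\}$, meaning $g^{-1}({\alpha}_{j})=(\hat{g}\circ\Psi)^{-1}({\alpha}_{j})$ for all $j$. The one genuine point requiring care is the completeness hypothesis: Theorem~\ref{thm-unicity} needs $ds^{2}$ or $d{\hat{s}}^{2}$, i.e.\ $d{\sigma}^{2}$ or $d{\hat{\sigma}}^{2}$, to be complete, whereas the statement here assumes weak completeness of $f(\Sigma)$ or $\widehat{f}(\widehat{\Sigma})$. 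But weak completeness of a maxface is \emph{defined} (see the discussion after Theorem~\ref{max-EW}) to mean precisely that the associated metric $d{\sigma}^{2}=(1+|g|^{2})^{2}|\omega|^{2}$ is complete, so this is automatic and no separate argument is needed.

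Third, having matched all the data, I would invoke Theorem~\ref{thm-unicity} with $m=2$: since $q\geq 7=m+5$ and one of $d{\sigma}^{2}$, $d{\hat{\sigma}}^{2}$ is complete, the theorem yields $g\equiv\hat{g}\circ\Psi$, contradicting the assumption. Hence if $g\not\equiv\hat{g}\circ\Psi$ then $g$ and $\hat{g}\circ\Psi$ can share at most $6\,(=2+4)$ distinct values, which is the assertion.

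I do not expect any serious obstacle; the only mild subtlety is making the translation between ``weakly complete maxface'' and ``complete conformal metric $(1+|g|^{2})^{2}|\omega|^{2}$'' explicit, and observing that nonflatness is exactly what guarantees the Gauss maps are nonconstant so that Theorem~\ref{thm-unicity} applies. Everything else is a direct citation of Theorems~\ref{max-EW} and \ref{thm-unicity}.
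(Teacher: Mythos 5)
Your proposal is correct and is exactly the argument the paper intends: Theorem \ref{thm-4-maximal5} is stated as a direct application of Theorem \ref{thm-unicity} with $m=2$, using the Weierstrass data of Theorem \ref{max-EW}, the fact that weak completeness of a maxface means completeness of $d\sigma^{2}=(1+|g|^{2})^{2}|\omega|^{2}$, and nonflatness to guarantee the Gauss maps are nonconstant. No gaps; your handling of the completeness translation and the contrapositive matches the paper's (implicit) proof.
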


\subsection{Lagrangian Gauss map of improper affine fronts in ${\R}^{3}$} 
Improper affine spheres in the affine $3$-space ${\R}^{3}$ also have similar properties to minimal surfaces in Euclidean $3$-space. 
Recently, Mart\'inez \cite{Ma2005} discovered the correspondence between improper affine spheres and smooth special Lagrangian immersions in 
the complex $2$-space ${\C}^{2}$ and introduced the notion of {\it improper affine fronts}, that is, a class of (locally strongly convex) improper 
affine spheres with some admissible singularities in ${\R}^{3}$. We remark that this class is called 
``improper affine maps'' in \cite{Ma2005}, but we call this class ``improper affine fronts'' because all of improper affine maps are 
wave fronts in ${\R}^{3}$ (\cite{Na2009}, \cite{UY2011}). 
The differential geometry of wave fronts is discussed in \cite{SUY2009}. 
Moreover, Mart\'inez gave the following holomorphic representation for this class. 

\begin{theorem}[{\cite[Theorem 3]{Ma2005}}]
Let $\Sigma$ be a Riemann surface and $(F, G)$ a pair of holomorphic functions on $\Sigma$ such that $\text{Re}(FdG)$ is exact and 
$|dF|^{2}+|dG|^{2}$ is positive definite. Then the induced map $\psi\colon \Sigma \to {\R}^{3}=\C\times {\R}$ given by 
$$
\psi :=\biggl{(}G+\overline{F}, \dfrac{|G|^{2}-|F|^{2}}{2}+\text{Re}\biggl{(} GF- 2\int FdG \biggr{)} \biggr{)}
$$
is an improper affine front. Conversely, any improper affine front is given in this way. 
Moreover we set $x:= G+\overline{F}$ and $n:= \overline{F}-G$. Then $L_{\psi}:=x+\sqrt{-1}n\colon \Sigma \to {\C}^{2}$ is a special Lagrangian 
immersion whose induced metric $d{\tau}^{2}$ from ${\C}^{2}$ is given by 
$$
d{\tau}^{2}=2(|dF|^{2}+|dG|^{2}). 
$$
In addition, the affine metric $h$ of $\psi$ is expressed as $h:=|dG|^{2}-|dF|^{2}$ and the singular points of $\psi$ correspond to the 
points where $|dF|=|dG|$. 
\end{theorem}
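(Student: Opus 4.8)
The plan is to reduce the statement to a local computation in a conformal coordinate coming from the (completed) affine metric, and then to translate back and forth between the affine-geometric structure equations of an improper affine sphere and the bare statement that the pair $(F,G)$ is holomorphic. Throughout I use the classical normal-form description (Calabi): an improper affine sphere with constant affine normal is, locally, the graph $x^{3}=\rho(x^{1},x^{2})$ of a strictly convex solution of a Monge--Amp\`ere equation $\det(\mathrm{Hess}\,\rho)=c$ with $c\neq 0$, its Blaschke metric being $h=\sum\rho_{ij}\,dx^{i}dx^{j}$ and its affine conormal being proportional to $(-\rho_{1},-\rho_{2},1)$; an improper affine \emph{front} is the analogous object in which the pair consisting of position and affine conormal stays an immersion even though $\rho$ may cease to be a graph along a singular set. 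Since the nondegenerate metric $d\tau^{2}$ carried by that pair equips $\Sigma$ with a Riemann surface structure, I fix a local complex coordinate $z$ throughout.

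For the direct implication I would argue as follows. First, $\psi$ is single-valued: $G+\overline F$, $\frac12(|G|^{2}-|F|^{2})$ and $\text{Re}(GF)$ are visibly single-valued, while $\text{Re}\!\int F\,dG$ is single-valued precisely because $\text{Re}(F\,dG)$ is exact. Writing $x:=G+\overline F$ and $n:=\overline F-G$, one computes $dx=G'\,dz+\overline{F'}\,d\bar z$ and $dn=-G'\,dz+\overline{F'}\,d\bar z$, and a short computation yields the contact relation $d\rho=-\text{Re}(\overline n\,dx)$ for the third coordinate $\rho$ of $\psi$; since the pair $(x,n)\colon\Sigma\to\C^{2}$ is then an immersion exactly where $|G'|^{2}+|F'|^{2}>0$, i.e.\ everywhere by the positive-definiteness hypothesis, $\psi$ is a wave front. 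On the open set $\{|dF|\neq|dG|\}$ the map $x$ is a local diffeomorphism onto a planar domain, so there $\psi$ is genuinely the graph of a function $\rho(x^{1},x^{2})$, and a direct Hessian computation (using that $F',G'$ are $z$-derivatives of holomorphic functions) shows that $\det(\mathrm{Hess}\,\rho)$ is a fixed nonzero constant and the affine conormal is proportional to $(-\rho_{1},-\rho_{2},1)$; thus $\psi$ is an improper affine sphere off its singular set, hence an improper affine front. The same computation identifies the Blaschke metric as $h=|dG|^{2}-|dF|^{2}$, which is positive definite on $\{|dF|<|dG|\}$ and degenerate on $\{|dF|=|dG|\}$, so the singular set of $\psi$ is exactly $\{|dF|=|dG|\}$.

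For the converse, starting from an improper affine front $\psi$, I would put on $\Sigma$ the conformal structure of $d\tau^{2}$, fix a local complex coordinate, and recover $F,G$ from the complexified position and conormal by inverting $x=G+\overline F$ and $n=\overline F-G$, i.e.\ $G=\frac12(x-n)$ and $F=\frac12(\overline x-\overline n)$. The key point is that the integrability (Gauss--Codazzi) equations of an improper affine sphere --- equivalently the Monge--Amp\`ere equation satisfied by the local graph --- are precisely the assertion that $\partial_{\bar z}F=\partial_{\bar z}G=0$; the exactness of $\text{Re}(F\,dG)$ then follows because, up to a closed term, it is the differential of the globally defined third coordinate of $\psi$; and substituting $(F,G)$ into the representation formula reproduces $\psi$, the base point $z_{0}$ absorbing the residual constant of integration.

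Finally, for the special Lagrangian claim, write $L_{\psi}=x+\sqrt{-1}\,n$ with $\C^{2}$-coordinates $w_{k}=x^{k}+\sqrt{-1}\,n^{k}$. Using $dz\wedge dz=d\bar z\wedge d\bar z=0$ and the holomorphicity of $F,G$, I would compute $\sum_{k}dx^{k}\wedge dn^{k}=\text{Re}(dx\wedge\overline{dn})=-(|G'|^{2}+|F'|^{2})\,\text{Re}(dz\wedge d\bar z)=0$, so $L_{\psi}$ is Lagrangian; likewise the pull-back of $dw_{1}\wedge dw_{2}$ equals $(|G'|^{2}+|F'|^{2})\,dz\wedge d\bar z$, which is purely imaginary, so $\text{Re}(dw_{1}\wedge dw_{2})$ pulls back to $0$ and $L_{\psi}$ is special Lagrangian; and the flat metric of $\C^{2}$ pulls back to $dx\,\overline{dx}+dn\,\overline{dn}=2(|G'|^{2}+|F'|^{2})|dz|^{2}=2(|dF|^{2}+|dG|^{2})$, the relation $h=|dG|^{2}-|dF|^{2}$ falling out of the same bookkeeping. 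The main obstacle is the geometric heart of the theorem: identifying the affine condition (affine normal constant, i.e.\ the Monge--Amp\`ere equation together with the Blaschke structure equations) with the plain holomorphicity of $F$ and $G$ in the conformal coordinate, and, in the converse direction, reading $F,G$ off the conormal and deducing $\bar\partial F=\bar\partial G=0$ from the Codazzi equations; once that translation is in place, the remaining verifications are routine calculations with the explicit formula for $\psi$.
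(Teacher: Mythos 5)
You should first note the ground truth here: the paper does not prove this statement at all. It is imported verbatim as Mart\'inez's representation theorem (\cite[Theorem 3]{Ma2005}) and used as a black box in Section 4.4, so there is no internal argument to compare yours against; your text has to stand on its own as a proof of Mart\'inez's theorem. Judged that way, the computations you actually carry out are correct: exactness of $\text{Re}(F\,dG)$ gives single-valuedness of the third coordinate, the relation $d\varphi=-\text{Re}(\overline{n}\,dx)$ for that coordinate checks out, $x$ is a local diffeomorphism exactly on $\{|dF|\neq|dG|\}$, the pull-backs of $\sum_k dx^k\wedge dn^k$ and of $\text{Re}(dw_1\wedge dw_2)$ vanish, and the flat metric of ${\C}^2$ pulls back to $2(|dF|^2+|dG|^2)$. (One slip: from $x=G+\overline{F}$, $n=\overline{F}-G$ you get $F=\tfrac12(\overline{x}+\overline{n})$, not $\tfrac12(\overline{x}-\overline{n})$.)

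The genuine gap is that the geometric heart of the theorem is asserted rather than proved, and it is exactly the part that makes this Mart\'inez's theorem and not a routine exercise. In the direct direction you claim that ``a direct Hessian computation shows'' $\det(\mathrm{Hess}\,\rho)$ is a nonzero constant with affine conormal proportional to $(-\rho_1,-\rho_2,1)$, and that the Blaschke metric is $|dG|^2-|dF|^2$; none of this is exhibited, and the last identification is not mere bookkeeping, since the Blaschke metric carries the normalization factor $(\det \mathrm{Hess}\,\rho)^{-1/4}$, so one must actually verify the Monge--Amp\`ere constant is $1$ in the chosen normalization (and handle the sign/orientation on $\{|dF|>|dG|\}$, where the Hessian is negative definite). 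In the converse direction the ``key point'' --- that the structure (Gauss--Codazzi/Monge--Amp\`ere) equations of an improper affine front, written in the conformal coordinate of $d\tau^2$, are equivalent to $\partial_{\bar z}F=\partial_{\bar z}G=0$, and that exactness of $\text{Re}(F\,dG)$ then follows from the global existence of the third coordinate --- is again only announced. You acknowledge this yourself in your closing sentence, but announcing the translation is not supplying it: until those two computations are performed (they constitute the bulk of Mart\'inez's proof), the argument establishes only the easy peripheral identities and not the representation theorem itself. If your intent was merely to justify quoting the result, the honest route is the one the paper takes: cite \cite{Ma2005} and do not reprove it.
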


We remark that Nakajo \cite{Na2009} constructed a representation formula for indefinite improper affine spheres with some admissible singularities. 

The nontrivial part of the Gauss map of $L_{\psi}\colon \Sigma \to {\C}^{2}\simeq {\R}^{4}$ (see \cite{CM1987}) is the meromorphic function 
$\nu\colon \Sigma \to \C\cup\{\infty \}$ given by 
$$
\nu := \dfrac{dF}{dG}, 
$$
which is called the {\it Lagrangian Gauss map} of $\psi$. An improper affine front is said to be {\it weakly complete} if the induced metric 
$d{\tau}^{2}$ is complete. We remark that 
$$
d{\tau}^{2}=2(|dF|^{2}+|dG|^{2})=2(1+|\nu|^{2})|dG|^{2}. 
$$

Applying Theorem \ref{thm-ramification} to the metric $d{\tau}^{2}$, we can get the following theorem. 
This is a generalization of \cite[Theorem 4.6]{Ka2013}. 

\begin{theorem}\label{thm-4-improper1}
Let $\psi\colon \Sigma \to {\R}^{3}$ be an improper affine front. 
Let $q\in \N$, ${\alpha}_{1}, \ldots, {\alpha}_{q}\in \C\cup \{\infty \}$ be distinct and ${\nu}_{1}, \ldots, {\nu}_{q}\in \N\cup \{\infty \}$. 
Suppose that 
\begin{equation}\label{equ-4-imramification} 
\gamma= \displaystyle \sum_{j=1}^{q} \biggl{(}1-\dfrac{1}{{\nu}_{j}} \biggr{)}> 3\,(=1+2). 
\end{equation}
If the Lagrangian Gauss map $\nu\colon \Sigma \to \C\cup \{\infty \}$ satisfies the property that 
all ${\alpha}_{j}$-points of $\nu$ have multiplicity at least ${\nu}_{j}$, then there exists a positive constant $C$, depending on 
${\alpha}_{1}, \ldots, {\alpha}_{q}$, but not the surface, such that for all $p\in \Sigma$ we have
$$
|K_{d{\tau}^{2}}(p)|^{1/2}\leq \dfrac{C}{d(p)}, 
$$
where $K_{d{\tau}^{2}}(p)$ is the Gaussian curvature of the metric $d{\tau}^{2}$ at $p$ and $d(p)$ is the geodesic distance from $p$ to 
the boundary of $\Sigma$. 
\end{theorem}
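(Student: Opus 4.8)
The plan is to observe that the metric $d\tau^{2}$ is exactly of the form to which Theorem \ref{thm-ramification} applies, with the exponent $m$ equal to $1$. Indeed, the Lagrangian Gauss map $\nu = dF/dG$ is a meromorphic function on $\Sigma$, and $dG$ is a holomorphic $1$-form; setting $\omega := \sqrt{2}\,dG$, which is again a holomorphic $1$-form, one has
$$
d\tau^{2} = 2(|dF|^{2}+|dG|^{2}) = 2(1+|\nu|^{2})|dG|^{2} = (1+|\nu|^{2})^{1}|\omega|^{2},
$$
so $d\tau^{2}$ has precisely the shape $(1+|g|^{2})^{m}|\omega|^{2}$ of (\ref{equ-metric}) with $g=\nu$ and $m=1$. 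The positive-definiteness assumed for $|dF|^{2}+|dG|^{2}$ guarantees that this is a genuine conformal metric on $\Sigma$, so that the hypothesis of Theorem \ref{thm-ramification} is met.

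Next I would check that the remaining hypotheses of the present theorem are exactly those of Theorem \ref{thm-ramification} specialized to $m=1$: the inequality (\ref{equ-4-imramification}), namely $\gamma = \sum_{j=1}^{q}(1-1/\nu_{j}) > 3$, is the inequality (\ref{equ-ramification}) with $m+2 = 3$; and the condition that all ${\alpha}_{j}$-points of $\nu$ have multiplicity at least ${\nu}_{j}$ is literally the ramification condition imposed on $g$ in Theorem \ref{thm-ramification}. Hence all of the data $(\Sigma, \omega, \nu, m, q, {\alpha}_{j}, {\nu}_{j})$ satisfy the assumptions of that theorem.

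Therefore Theorem \ref{thm-ramification} applies and produces a positive constant $C$, depending only on $m=1$, on $\gamma$, and on ${\alpha}_{1},\ldots,{\alpha}_{q}$ — hence not on the surface — such that $|K_{d\tau^{2}}(p)|^{1/2}\le C/d(p)$ for every $p\in\Sigma$, where $d(p)$ is the geodesic distance from $p$ to the boundary of $\Sigma$ measured in $d\tau^{2}$; since $d\tau^{2}$ is the metric appearing in the statement and $K_{d\tau^{2}}$ its Gaussian curvature, this is exactly the desired conclusion. There is essentially no obstacle, as the statement is a specialization of an already-established result; the only point deserving a line of care is that the scalar factor $2$ in $d\tau^{2}$ is harmlessly absorbed into the holomorphic $1$-form $\omega$, so that the normalization matches (\ref{equ-metric}) on the nose and the claimed dependence of $C$ on $\gamma$ and ${\alpha}_{1},\ldots,{\alpha}_{q}$ (and not on $\psi$) is correct.
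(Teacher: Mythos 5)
Your proposal is correct and follows exactly the paper's route: the result is obtained by applying Theorem \ref{thm-ramification} with $m=1$ to the metric $d\tau^{2}=2(1+|\nu|^{2})|dG|^{2}$, the factor $2$ being absorbed into the holomorphic $1$-form as you note (and being harmless anyway, since $|K|^{1/2}d(p)$ is invariant under constant rescaling of the metric). Nothing further is needed.
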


As a corollary of Theorem \ref{thm-4-improper1}, 
we have the following ramification theorem for the Lagrangian Gauss map of a weakly complete improper affine front in ${\R}^{3}$.

\begin{corollary}[{\cite[Theorem 3.2]{KN2012}}]\label{thm-4-improper2}
Let $\psi\colon \Sigma \to {\R}^{3}$ be a weakly complete improper affine front. 
Let $q\in \N$, ${\alpha}_{1}, \ldots, {\alpha}_{q}\in \C\cup \{\infty \}$ be distinct and ${\nu}_{1}, \ldots, {\nu}_{q}\in \N\cup \{\infty \}$. 
Suppose that the inequality (\ref{equ-4-imramification}) holds. If the Lagrangian Gauss map $\nu\colon \Sigma \to \C\cup \{\infty \}$ satisfies 
the property that all ${\alpha}_{j}$-points of $\nu$ have multiplicity at least ${\nu}_{j}$, then $\nu$ must be constant, that is, $\psi(\Sigma)$ is an 
elliptic paraboloid. In particular, if the Lagrangian Gauss map of a weakly complete improper affine front in ${\R}^{3}$ is nonconstant, 
then it can omit at most $3\,(=1+2)$ values. 
\end{corollary}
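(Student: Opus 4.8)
The plan is to deduce the statement directly from Corollary~\ref{cor-ramification}, by recognizing the relevant metric on $\Sigma$ as the model conformal metric $(1+|g|^{2})^{m}|\omega|^{2}$ with $m=1$ and then feeding the ramification hypothesis into that corollary.

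First I would invoke Mart\'inez's representation theorem quoted above: for the improper affine front $\psi$, the associated special Lagrangian immersion $L_{\psi}\colon \Sigma\to{\C}^{2}$ has induced metric
$$
d\tau^{2}=2(|dF|^{2}+|dG|^{2})=2(1+|\nu|^{2})|dG|^{2},
$$
where $\nu=dF/dG$ is the Lagrangian Gauss map. Since $G$ is a globally defined holomorphic function on $\Sigma$, the $1$-form $\omega:=\sqrt{2}\,dG$ is holomorphic on $\Sigma$, and $\nu$, being the ratio of the holomorphic $1$-forms $dF$ and $dG$, is meromorphic on $\Sigma$. Hence
$$
d\tau^{2}=(1+|\nu|^{2})^{m}|\omega|^{2}\quad\text{with }m=1,
$$
which is exactly a conformal metric of the form (\ref{equ-metric}) with $m=1$. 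By definition, ``weakly complete'' means that $d\tau^{2}$ is complete, the hypothesis $\gamma>3$ in (\ref{equ-4-imramification}) is precisely the inequality (\ref{equ-ramification}) for $m=1$, and the multiplicity assumption on the $\alpha_{j}$-points of $\nu$ is the ramification hypothesis of Corollary~\ref{cor-ramification}. Therefore Corollary~\ref{cor-ramification} applies and forces $\nu$ to be constant. When $\nu=dF/dG$ is constant, $dF$ and $dG$ are proportional (with $dG\equiv 0$ if $\nu\equiv\infty$), so $F$ and $G$ are affinely related; substituting this into Mart\'inez's formula for $\psi$ shows, as in \cite{Ma2005} and \cite[Theorem 3.2]{KN2012}, that $\psi(\Sigma)$ is an elliptic paraboloid. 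This proves the first assertion.

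For the final ``in particular'' part I would argue by contradiction. Suppose the Lagrangian Gauss map $\nu$ of a weakly complete improper affine front is nonconstant yet omits $q\ge 4$ distinct values $\alpha_{1},\dots,\alpha_{q}$. Taking $\nu_{j}=\infty$ for all $j$ gives $\gamma=q\ge 4>3$, so the part already proved forces $\nu$ to be constant, a contradiction; hence $\nu$ can omit at most $3$ values. I do not anticipate a genuine difficulty here: the only points needing a little care are the identification $d\tau^{2}=(1+|\nu|^{2})^{1}|\omega|^{2}$ with $\omega$ a bona fide holomorphic $1$-form on $\Sigma$, and the numerology that the threshold $m+2=3$ matches the hypothesis $\gamma>3$ in (\ref{equ-4-imramification}).
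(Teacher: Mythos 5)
Your proof is correct and follows essentially the same route as the paper: writing $d\tau^{2}=2(1+|\nu|^{2})|dG|^{2}$ as the model metric $(1+|\nu|^{2})^{m}|\omega|^{2}$ with $m=1$ and $\omega=\sqrt{2}\,dG$, then invoking the $m=1$ ramification machinery (the paper phrases it via Theorem \ref{thm-4-improper1} plus completeness, you via Corollary \ref{cor-ramification}, which is the same argument), with the elliptic-paraboloid identification and the $\nu_{j}=\infty$ specialization handled exactly as in the paper.
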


Since the singular points of $\psi$ correspond to the points where $|\nu|=1$, we can get a simple proof of the parametric affine Bernstein theorem 
(\cite{Ca1958}, \cite{Jo1954}) for an improper affine sphere from the viewpoint of function-theoretic properties of the Lagrangian Gauss map. 
For the details, see \cite[Corollary 3.6]{Ka2013}. 

As an application of Corollary \ref{thm-4-improper2}, we obtain the following analogue to the Ahlfors islands theorem. 

\begin{theorem}\label{thm-4-improper3}
Let $\psi\colon \Sigma \to {\R}^{3}$ be a weakly complete improper affine front. 
Let $q\in \N$, ${\alpha}_{1}, \ldots, {\alpha}_{q}\in \C\cup \{\infty \}$ be distinct, 
$D_{j}({\alpha}_{j}, \varepsilon):=\{z\in \C\cup \{\infty \} \,;\, |z, {\alpha}_{j}|< \varepsilon \}$ $(1\leq j\leq q)$ be pairwise disjoint and 
${\nu}_{1}, \ldots, {\nu}_{q}\in \N$. Suppose that the inequality (\ref{equ-4-imramification}) holds. 
Then there exists $\varepsilon > 0$ such that, if the Lagrangian Gauss map $\nu$ has no island of multiplicity less than ${\nu}_{j}$ over 
$D_{j}({\alpha}_{j}, \varepsilon)$ 
for all $j\in \{1, \ldots , q\}$, then $\nu$ must be constant, that is, $\psi(\Sigma)$ is an elliptic paraboloid. 
\end{theorem}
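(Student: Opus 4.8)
The plan is to argue by contradiction, in exact parallel with the proof of Corollary \ref{cor-covering}, using Corollary \ref{thm-4-improper2} in place of Corollary \ref{cor-ramification}. First note that the weakly complete case is precisely the case $m=1$ of the general framework: by Mart\'inez's representation theorem an improper affine front is encoded by a pair $(F,G)$ of holomorphic functions, the Lagrangian Gauss map is the meromorphic function $\nu=dF/dG$, and the metric whose completeness defines weak completeness is $d\tau^{2}=2(|dF|^{2}+|dG|^{2})=2(1+|\nu|^{2})|dG|^{2}$, which has the form $(1+|\nu|^{2})^{m}|\omega|^{2}$ with $m=1$ and $\omega=\sqrt{2}\,dG$. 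Thus Corollary \ref{thm-4-improper2} (the $m=1$ instance of Corollary \ref{cor-ramification}, read through this dictionary) is available, and the inequality (\ref{equ-4-imramification}) is exactly $\gamma>m+2$ for $m=1$.

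Suppose, for contradiction, that no admissible $\varepsilon>0$ exists. Then for every $\varepsilon>0$ there is a weakly complete improper affine front $\psi_{\varepsilon}\colon\Sigma_{\varepsilon}\to\R^{3}$ whose Lagrangian Gauss map $\nu_{\varepsilon}$ is nonconstant and has no island, in the sense of Definition \ref{def-island}, of multiplicity less than $\nu_{j}$ over $D_{j}(\alpha_{j},\varepsilon)$ for any $j$. I would then invoke the following purely function-theoretic claim: a nonconstant meromorphic function $\nu$ on a Riemann surface which, for small $\varepsilon$, has no island of multiplicity less than $\nu_{j}$ over $D_{j}(\alpha_{j},\varepsilon)$ must have every $\alpha_{j}$-point of multiplicity at least $\nu_{j}$. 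Granting this, each $\nu_{\varepsilon}$ meets the hypotheses of Corollary \ref{thm-4-improper2} together with (\ref{equ-4-imramification}), hence $\nu_{\varepsilon}$ is constant, contradicting its nonconstancy; this contradiction produces the desired $\varepsilon$, and the geometric statement (a constant Lagrangian Gauss map means $\psi(\Sigma)$ is an elliptic paraboloid) is already recorded in Corollary \ref{thm-4-improper2}.

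To prove the function-theoretic claim I would use the local normal form of $\nu$. If $p$ is an $\alpha_{j}$-point of multiplicity $k$, then since $\nu$ is nonconstant its $\alpha_{j}$-fibre is discrete, so there is a relatively compact coordinate neighborhood $N$ of $p$ with $\overline{N}\subset\Sigma$ and $\alpha_{j}\notin\nu(\partial N)$. For every $\varepsilon$ smaller than the chordal distance from $\alpha_{j}$ to $\nu(\partial N)$, the component $U$ of $\nu^{-1}(D_{j}(\alpha_{j},\varepsilon))$ containing $p$ lies in $N$, is simply connected, and $\nu|_{U}\colon U\to D_{j}(\alpha_{j},\varepsilon)$ is proper of degree exactly $k$; that is, $U$ is an island of multiplicity $k$. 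Consequently, if some $\alpha_{j}$-point had multiplicity $k<\nu_{j}$, then $\nu$ would possess, at a sufficiently small scale, an island of multiplicity $k<\nu_{j}$ over $D_{j}(\alpha_{j},\varepsilon)$, contrary to hypothesis.

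The step I expect to be the main obstacle is exactly this island-versus-ramification passage: the scale at which an $\alpha_{j}$-point of a given front yields a genuine (relatively compact, simply connected) island depends on that front, so some care with the order of quantifiers is needed to turn the family $\{\psi_{\varepsilon}\}_{\varepsilon>0}$ furnished by the contradiction hypothesis into a single contradiction, exactly as in the proof of Corollary \ref{cor-covering}. Once this is dealt with, the remaining ingredients — the representation theorem, the identification of weak completeness with completeness of $d\tau^{2}$, and Corollary \ref{thm-4-improper2} — enter immediately, and the important special case $q=9$, $\nu_{j}=2$ follows at once.
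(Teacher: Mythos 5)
Your route is the paper's route: the paper obtains this theorem by running the proof of Corollary \ref{cor-covering} verbatim, with Corollary \ref{thm-4-improper2} (the $m=1$ instance of Corollary \ref{cor-ramification}, via the dictionary $d\tau^{2}=2(1+|\nu|^{2})|dG|^{2}$, i.e.\ $g=\nu$, $\omega=\sqrt{2}\,dG$) in place of Corollary \ref{cor-ramification}; your local ``island versus ramification'' lemma is exactly the implication that proof uses, and you spell it out in more detail than the paper does.

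The only point to settle is the quantifier framing that you yourself flag as the main obstacle. You negate the statement as if a single $\varepsilon$ had to serve all weakly complete improper affine fronts simultaneously, which produces a family $\psi_{\varepsilon}$ in which each $\nu_{\varepsilon}$ satisfies the no-island hypothesis only at its own scale $\varepsilon$; your lemma, which needs the hypothesis at arbitrarily small scales for one fixed function, then cannot be applied, and closing that version would require genuinely Ahlfors-type uniformity. But in the statement $\psi$, $\Sigma$ and $\nu$ are fixed before ``there exists $\varepsilon>0$'', so $\varepsilon$ is allowed to depend on the front. The negation therefore says: $\nu$ is nonconstant and, for \emph{every} $\varepsilon>0$, $\nu$ has no island of multiplicity less than ${\nu}_{j}$ over $D_{j}({\alpha}_{j},\varepsilon)$ for all $j$. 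Now your local argument applies at a scale adapted to each ${\alpha}_{j}$-point: an ${\alpha}_{j}$-point of multiplicity $k$ yields, for sufficiently small $\varepsilon$, an island of multiplicity exactly $k$, so $k\geq {\nu}_{j}$ for every ${\alpha}_{j}$-point; Corollary \ref{thm-4-improper2} (with the inequality (\ref{equ-4-imramification}), i.e.\ $\gamma>1+2$) then forces $\nu$ to be constant, contradicting nonconstancy, and the geometric conclusion (elliptic paraboloid) is the one recorded there. With this reading there is no family $\{\psi_{\varepsilon}\}$ and no uniformity issue, and your argument is complete and coincides with the paper's intended proof.
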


The important case of Theorem \ref{thm-4-improper3} is the case where $q=7\,(=2\cdot 1+5)$ and ${\nu}_{j}=2$ for each $j$ 
$(j=1,\ldots, q)$. 

\begin{corollary}\label{thm-4-improper4}
Let $\psi\colon \Sigma \to {\R}^{3}$ be a weakly complete improper affine front. 
Let ${\alpha}_{1}, \ldots, {\alpha}_{7} \in \C\cup \{\infty \}$ be distinct and 
$D_{j}({\alpha}_{j}, \varepsilon):=\{z\in \C\cup \{\infty \} \,;\, |z, {\alpha}_{j}|< \varepsilon \}$ $(1\leq j\leq 7)$. 
Then there exists $\varepsilon > 0$ such that, if the Lagrangian Gauss map $\nu$ has no simple island of over any of the small disks 
$D_{j}({\alpha}_{j}, \varepsilon)$ for all $j\in \{1, \ldots , 7 \}$, then $\nu$ must be constant, that is, $\psi(\Sigma)$ is an elliptic paraboloid. 
\end{corollary}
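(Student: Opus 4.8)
The plan is to deduce Corollary \ref{thm-4-improper4} as the special case $q=7$, ${\nu}_{1}=\cdots={\nu}_{7}=2$ of Theorem \ref{thm-4-improper3}. First I would check that with these values the ramification quantity is
\[
\gamma=\sum_{j=1}^{7}\Bigl(1-\tfrac{1}{2}\Bigr)=\tfrac{7}{2}>3\,(=1+2),
\]
so that the hypothesis \eqref{equ-4-imramification} of Theorem \ref{thm-4-improper3} is satisfied. Next I would observe that, since the multiplicity of an island is a positive integer, an island has multiplicity less than ${\nu}_{j}=2$ precisely when it has multiplicity one, that is, when it is a simple island in the sense of Definition \ref{def-island}. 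Consequently the assumption ``$\nu$ has no simple island over $D_{j}({\alpha}_{j},\varepsilon)$'' is literally the same as the assumption ``$\nu$ has no island of multiplicity less than ${\nu}_{j}$ over $D_{j}({\alpha}_{j},\varepsilon)$'' that appears in Theorem \ref{thm-4-improper3}.

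Since the points ${\alpha}_{1},\ldots,{\alpha}_{7}$ are distinct, their mutual chordal distances $|{\alpha}_{i},{\alpha}_{j}|$ are positive, so there is an ${\varepsilon}_{0}>0$ for which the disks $D_{j}({\alpha}_{j},{\varepsilon}_{0})$ $(1\leq j\leq 7)$ are pairwise disjoint, which is required in order to invoke Theorem \ref{thm-4-improper3}. That theorem, applied with $q=7$ and ${\nu}_{j}=2$, then produces an ${\varepsilon}_{1}>0$ having the asserted property for these data; setting $\varepsilon:=\min\{{\varepsilon}_{0},{\varepsilon}_{1}\}$ we conclude that if $\nu$ has no simple island over any of the disks $D_{j}({\alpha}_{j},\varepsilon)$, then $\nu$ is constant, equivalently $\psi(\Sigma)$ is an elliptic paraboloid.

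I do not expect a genuine obstacle at this stage: all of the analytic substance has already been carried out in Theorem \ref{thm-ramification} — applied here to the complete conformal metric $d{\tau}^{2}=2(1+|\nu|^{2})|dG|^{2}$, i.e. to the exponent $m=1$ — and in the chain of corollaries through Corollary \ref{thm-4-improper2} and Theorem \ref{thm-4-improper3}. The only point needing any care is the bookkeeping of the two thresholds ${\varepsilon}_{0}$ (disjointness of the disks) and ${\varepsilon}_{1}$ (the number furnished by Theorem \ref{thm-4-improper3}), which is dispatched by passing to their minimum. I would close, as in the minimal-surface and CMC-$1$ cases, with the remark that this is the natural analogue of the Ahlfors five islands theorem, with $5$ replaced by $2m+5=7$ because the relevant exponent of the conformal metric is $m=1$.
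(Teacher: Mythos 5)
Your proposal is correct and takes essentially the same route the paper intends: Corollary \ref{thm-4-improper4} is presented there simply as the specialization of Theorem \ref{thm-4-improper3} to $q=7$ and ${\nu}_{j}=2$, where $\gamma=7/2>3$ and ``island of multiplicity less than $2$'' means ``simple island''. Your bookkeeping with $\varepsilon:=\min\{{\varepsilon}_{0},{\varepsilon}_{1}\}$ is fine, with the tacit (and easily checked) observation that a simple island over the larger disk $D_{j}({\alpha}_{j},{\varepsilon}_{1})$ contains a simple island over the smaller disk $D_{j}({\alpha}_{j},\varepsilon)$, so the hypothesis at radius $\varepsilon$ implies the one needed at radius ${\varepsilon}_{1}$.
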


Finally, by applying Theorem \ref{thm-unicity}, we provide the following unicity theorem for the Lagrangian Gauss maps of weakly complete 
improper affine fronts in ${\R}^{3}$. 

\begin{theorem}\label{thm-4-improper5}
Let $\psi\colon \Sigma \to {\R}^{3}$ and $\widehat{\psi}\colon \widehat{\Sigma}\to {\R}^{3}$ be two improper affine fronts and 
assume that there exists a conformal diffeomorphism $\Psi\colon \Sigma \to \widehat{\Sigma}$. Let $\nu\colon \Sigma \to \C\cup\{\infty \}$ and 
$\hat{\nu}\colon \widehat{\Sigma}\to \C\cup\{\infty \}$ be the Lagrangian Gauss maps of $\psi(\Sigma)$ and $\widehat{\psi}(\widehat{\Sigma})$, respectively. 
Suppose that there exist $q$ distinct points ${\alpha}_{1}, \ldots, {\alpha}_{q}\in \C\cup \{\infty \}$ 
such that ${\nu}^{-1}({\alpha}_{j})=(\hat{\nu}\circ \Psi)^{-1}({\alpha}_{j})$ $(1\leq j\leq q)$. 
If $q \geq 6 \,(=(1+4)+1)$ and either $\psi (\Sigma)$ or $\widehat{\psi} (\widehat{\Sigma})$ is weakly complete, then either 
$\nu\equiv \hat{\nu}\circ \Psi$ or $\nu$ and $\hat{\nu}$ are both constant, that is, $\psi (\Sigma)$ and  $\widehat{\psi} (\widehat{\Sigma})$ 
are both elliptic paraboloids. 
\end{theorem}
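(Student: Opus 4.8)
The plan is to reduce this unicity statement to Theorem \ref{thm-unicity} with $m=1$, together with the special structure of improper affine fronts that forces the Lagrangian Gauss map to be either nonconstant (when it is genuinely a front with a two-dimensional moduli) or constant (when $\psi(\Sigma)$ is an elliptic paraboloid). First I would recall that, for an improper affine front with holomorphic data $(F,G)$, the Lagrangian Gauss map is $\nu = dF/dG$ and the induced metric from $\mathbf{C}^2$ is $d\tau^2 = 2(1+|\nu|^2)|dG|^2$, so that, writing $\omega := \sqrt{2}\,dG$, we are exactly in the situation $d\tau^2 = (1+|\nu|^2)^m|\omega|^2$ with $m=1$ and $\omega$ a holomorphic $1$-form. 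The conformal diffeomorphism $\Psi\colon \Sigma\to\widehat{\Sigma}$ is the same datum required in Theorem \ref{thm-unicity}, and weak completeness of $\psi$ (resp.\ $\widehat{\psi}$) is by definition completeness of $d\tau^2$ (resp.\ $d\widehat{\tau}^2$), so the completeness hypothesis of Theorem \ref{thm-unicity} is met.

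Next I would dispose of the degenerate cases. If $\nu$ is constant, then by Corollary \ref{thm-4-improper2} (applied via weak completeness, after noting that a constant Lagrangian Gauss map forces $\psi(\Sigma)$ to be an elliptic paraboloid) we already land in the second alternative of the conclusion, provided we also show $\hat{\nu}$ is constant; but if $\nu$ is constant and $\nu^{-1}(\alpha_j) = (\hat{\nu}\circ\Psi)^{-1}(\alpha_j)$ for $q\ge 6\ge 2$ distinct values, then $\hat{\nu}\circ\Psi$ omits at least four of the $\alpha_j$ (all those different from the constant value of $\nu$), whence by Corollary \ref{thm-4-improper2} the map $\hat{\nu}$ must also be constant. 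Symmetrically, if $\hat{\nu}$ is constant the same argument makes $\nu$ constant. So from now on I may assume both $\nu$ and $\hat{\nu}\circ\Psi$ are nonconstant.

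With both maps nonconstant, Theorem \ref{thm-unicity} applies directly: we have $m=1$, so the hypothesis $q\ge m+5 = 6$ is exactly $q\ge 6\,(=(1+4)+1)$, the metrics are of the required form, $\Psi$ is a conformal diffeomorphism, at least one of $d\tau^2$, $d\widehat{\tau}^2$ is complete, and $\nu^{-1}(\alpha_j) = (\hat{\nu}\circ\Psi)^{-1}(\alpha_j)$ for the $q$ distinct values $\alpha_1,\dots,\alpha_q$. The conclusion of Theorem \ref{thm-unicity} is then $\nu\equiv\hat{\nu}\circ\Psi$, which is the first alternative. Combining this with the degenerate-case analysis gives the dichotomy stated in Theorem \ref{thm-4-improper5}.

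The main obstacle, I expect, is purely bookkeeping rather than analytic: one must be careful that the ``weakly complete'' hypothesis on $\psi$ or $\widehat{\psi}$ translates to completeness of the \emph{correct} metric (the Lagrangian induced metric $d\tau^2$, not the affine metric $h=|dG|^2-|dF|^2$, which is indefinite near singular points), and that in the degenerate case the implication ``$\nu$ constant $\Longrightarrow$ $\psi(\Sigma)$ is an elliptic paraboloid'' is invoked with the orientation/normalization conventions of Mart\'inez's representation. Once the identification $m=1$, $\omega=\sqrt{2}\,dG$ is set up cleanly, no new estimates are needed—the entire content is carried by Theorem \ref{thm-unicity} and Corollary \ref{thm-4-improper2}.
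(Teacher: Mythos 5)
Your main reduction is exactly the paper's: writing $d\tau^{2}=2(1+|\nu|^{2})|dG|^{2}=(1+|\nu|^{2})|\sqrt{2}\,dG|^{2}$ puts you in the setting of Theorem \ref{thm-unicity} with $m=1$, $\omega=\sqrt{2}\,dG$, and $q\geq 6=m+5$, and weak completeness is by definition completeness of $d\tau^{2}$; when both $\nu$ and $\hat{\nu}\circ\Psi$ are nonconstant this gives $\nu\equiv\hat{\nu}\circ\Psi$, which is how the paper obtains the theorem.

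There is, however, a genuine gap in your degenerate case. When $\nu\equiv c$ is constant you observe that $\hat{\nu}\circ\Psi$ omits at least four (indeed at least $q-1\geq 5$) of the $\alpha_{j}$, and you then invoke Corollary \ref{thm-4-improper2} to conclude that $\hat{\nu}$ is constant. But that corollary requires $\widehat{\psi}(\widehat{\Sigma})$ to be weakly complete, whereas the hypothesis only says that \emph{one} of $\psi(\Sigma)$, $\widehat{\psi}(\widehat{\Sigma})$ is weakly complete; if the complete one is $\psi(\Sigma)$ (the surface whose Gauss map is the constant one), Corollary \ref{thm-4-improper2} says nothing about $\hat{\nu}$, and the same objection applies to your ``symmetrically'' clause. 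The case can be closed, but it needs an additional argument: if $\nu\equiv c$ and $d\tau^{2}=2(1+|c|^{2})|dG|^{2}$ is complete, then (since positive definiteness forces $dG\neq 0$) this is a complete flat conformal metric, so the universal cover of $\Sigma$ is conformally $\C$, as in Remark \ref{rem-geometry}; lifting $\hat{\nu}\circ\Psi$ to $\C$ gives a meromorphic function omitting at least five values, hence constant by Picard's theorem (equivalently, one uses the $m=0$ case of Corollary \ref{cor-exceptional}), and therefore $\hat{\nu}$ is constant. With this repair (or by noting that if $c=\alpha_{j_{0}}$ for some $j_{0}$ then $(\hat{\nu}\circ\Psi)^{-1}(\alpha_{j_{0}})=\Sigma$ forces constancy outright), your dichotomy argument becomes complete and otherwise coincides with the paper's.
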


\subsection{Ratio of canonical forms of flat fronts in ${\H}^{3}$} 

For a holomorphic Legendrian immersion $\Lc\colon \Sigma \to SL(2, \C)$ on a simply connected Riemann surface $\Sigma$, the projection 
$$
f:= \Lc{\Lc}^{\ast}\colon \Sigma \to {\H}^{3}
$$
gives a {\it flat front} in ${\H}^{3}$. Here, flat fronts in ${\H}^{3}$ are flat surfaces in ${\H}^{3}$ with some admissible singularities  
(see \cite{KRUY2007}, \cite{KUY2004} for the definition of flat fronts in ${\H}^{3}$). We call $\Lc$ the {\it holomorphic lift} of $f$. 
Since $\Lc$ is a holomorphic Legendrian map, ${\Lc}^{-1}{d\Lc}$ is off-diagonal (see \cite{GMM2000}, \cite{KUY2003}, \cite{KUY2004}). 
If we set that 
$$
{\Lc}^{-1}{d\Lc} = \left(
\begin{array}{cc}
0      & \theta  \\
\omega & 0
\end{array}
\right), 
$$
then the pull-back of the canonical Hermitian metric of $SL(2, \C)$ by $\Lc$ is represented as 
$$
ds^{2}_{\Lc}:=|\omega|^{2}+|\theta|^{2}
$$ 
for holomorphic $1$-forms $\omega$ and $\theta$ on $\Sigma$. A flat front $f$ is said to be {\it weakly complete} 
if the metric $ds^{2}_{\Lc}$ is complete (\cite{KRUY2009, UY2011}). We define a meromorphic function on $\Sigma$ by the ratio of canonical forms 
$$
\rho := \dfrac{\theta}{\omega}. 
$$
Then a point $p\in \Sigma$ is a singular point of $f$ if and only if $|\rho (p)|=1$ (\cite{KRSUY2005}). We remark that 
$$
ds^{2}_{\Lc}=|\omega|^{2}+|\theta|^{2}=(1+|\rho|^{2})|\omega|^{2}. 
$$

Applying Theorem \ref{thm-ramification} to the metric $ds^{2}_{\Lc}$, we can get the following curvature estimate. 
This is a generalization of \cite[Theorem 4.8]{Ka2013}. 

\begin{theorem}\label{thm-4-flat1}
Let $f\colon \Sigma \to {\H}^{3}$ be a flat front on a simply connected Riemann surface $\Sigma$. 
Let $q\in \N$, ${\alpha}_{1}, \ldots, {\alpha}_{q}\in \C\cup \{\infty \}$ be distinct and ${\nu}_{1}, \ldots, {\nu}_{q}\in \N\cup \{\infty \}$. 
Suppose that the inequality (\ref{equ-4-imramification}) holds.  
If the ratio of canonical forms $\rho\colon \Sigma \to \C\cup \{\infty \}$ satisfies the property that 
all ${\alpha}_{j}$-points of $\rho$ have multiplicity at least ${\nu}_{j}$, then there exists a positive constant $C$, depending on 
${\alpha}_{1}, \ldots, {\alpha}_{q}$, but not the surface, such that for all $p\in \Sigma$ we have
$$
|K_{ds^{2}_{\Lc}}(p)|^{1/2}\leq \dfrac{C}{d(p)}, 
$$
where $K_{ds^{2}_{\Lc}}(p)$ is the Gaussian curvature of the metric $ds^{2}_{\Lc}$ at $p$ and $d(p)$ is the geodesic distance from $p$ to 
the boundary of $\Sigma$. 
\end{theorem}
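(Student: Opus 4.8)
The plan is to observe that Theorem \ref{thm-4-flat1} is precisely the case $m=1$ of Theorem \ref{thm-ramification}, so the whole proof amounts to identifying the data correctly. Because $\Sigma$ is simply connected, the holomorphic Legendrian lift $\Lc\colon \Sigma\to SL(2,\C)$ exists globally, and hence the holomorphic $1$-forms $\omega$, $\theta$ read off from ${\Lc}^{-1}d\Lc$, together with the meromorphic function $\rho=\theta/\omega$, are single-valued on all of $\Sigma$. The metric in question is
$$
ds^{2}_{\Lc}=|\omega|^{2}+|\theta|^{2}=(1+|\rho|^{2})|\omega|^{2},
$$
which has exactly the shape of (\ref{equ-metric}) with the meromorphic function taken to be $\rho$, the holomorphic $1$-form taken to be $\omega$, and $m=1$.

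Under this identification the hypothesis (\ref{equ-4-imramification}), namely $\gamma=\sum_{j=1}^{q}(1-1/\nu_{j})>3=1+2$, is word for word the hypothesis (\ref{equ-ramification}) of Theorem \ref{thm-ramification} with $m=1$; and the assumption that every $\alpha_{j}$-point of $\rho$ has multiplicity at least $\nu_{j}$ is the ramification assumption of that theorem applied to $g=\rho$. I would therefore invoke Theorem \ref{thm-ramification} directly: it produces a positive constant $C$, depending only on the (now fixed) value $m=1$, on $\gamma$, and on $\alpha_{1},\ldots,\alpha_{q}$, and in particular not on $f$, such that $|K_{ds^{2}_{\Lc}}(p)|^{1/2}\le C/d(p)$ for every $p\in\Sigma$, where $d(p)$ is the geodesic distance from $p$ to the boundary of $\Sigma$ in the metric $ds^{2}_{\Lc}$. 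That is the asserted estimate, so the argument is complete once the notational matching is spelled out.

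There is essentially no analytic obstacle here; the only things worth a sentence are the following. First, Theorem \ref{thm-ramification} places no nonvanishing hypothesis on the holomorphic $1$-form, so the fact that $\omega$ may have zeros (exactly at the poles of $\rho$) causes no difficulty: at such points $\omega$ and $\theta$ have no common zero since $\Lc$ is an immersion, so $ds^{2}_{\Lc}$ remains positive definite and its Gaussian curvature $K_{ds^{2}_{\Lc}}$ is well defined everywhere, with the local expression (\ref{equ-Gaussian}) (for $m=1$) valid in any local coordinate. Second, one should make explicit that the constant furnished by Theorem \ref{thm-ramification} is independent of the surface, which is precisely the strength claimed in the statement. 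Apart from this bookkeeping, the proof requires no ingredient beyond Theorem \ref{thm-ramification}.
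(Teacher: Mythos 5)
Your proposal is correct and is exactly the paper's route: the paper proves this theorem simply by applying Theorem \ref{thm-ramification} with $m=1$, $g=\rho$, and the holomorphic $1$-form $\omega$ to the metric $ds^{2}_{\Lc}=(1+|\rho|^{2})|\omega|^{2}$, which is what you do. Your additional remarks (single-valuedness of $\omega,\theta,\rho$ on the simply connected $\Sigma$ and positive definiteness of $ds^{2}_{\Lc}$ at the zeros of $\omega$) are accurate bookkeeping consistent with the paper.
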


If $\Sigma$ is not simply connected, then we consider that $\rho$ is a meromorphic function on its universal covering surface $\widetilde{\Sigma}$. 
As a corollary of Theorem \ref{thm-4-flat1}, 
we have the following ramification theorem for the ratio of canonical forms of a weakly complete improper affine front in ${\H}^{3}$.

\begin{corollary}[{\cite[Theorem 3.2]{Ka2013-2}}]\label{thm-4-flat2}
Let $f\colon \Sigma \to {\H}^{3}$ be a weakly complete flat front. 
Let $q\in \N$, ${\alpha}_{1}, \ldots, {\alpha}_{q}\in \C\cup \{\infty \}$ be distinct and ${\nu}_{1}, \ldots, {\nu}_{q}\in \N\cup \{\infty \}$. 
Suppose that the inequality (\ref{equ-4-imramification}) holds. If the ratio of canonical forms $\rho$ satisfies 
the property that all ${\alpha}_{j}$-points of $\rho$ have multiplicity at least ${\nu}_{j}$, then $\rho$ must be constant, that is, 
$f(\Sigma)$ is a horosphere or a hyperbolic cylinder. In particular, if the ratio of canonical forms of a weakly complete flat front in ${\H}^{3}$ is nonconstant, 
then it can omit at most $3\,(=1+2)$ values. 
\end{corollary}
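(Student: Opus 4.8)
The plan is to observe that the weakly complete metric $ds^{2}_{\Lc}=(1+|\rho|^{2})|\omega|^{2}$ is a conformal metric exactly of the form (\ref{equ-metric}) with $m=1$, so that Corollary \ref{cor-ramification} applies after a routine reduction. First I would reduce to the hypotheses of that corollary: assuming $\rho$ is nonconstant (otherwise there is nothing to prove) and passing, if necessary, to the universal covering $\pi\colon \widetilde{\Sigma}\to \Sigma$, I would replace $\omega$ and $\rho$ by $\pi^{\ast}\omega$ and $\rho\circ\pi$. Since $\pi$ is a local biholomorphism, every ${\alpha}_{j}$-point of $\rho\circ\pi$ still has multiplicity at least ${\nu}_{j}$, and the completeness of $ds^{2}_{\Lc}$ on $\Sigma$ lifts to completeness of $\pi^{\ast}(ds^{2}_{\Lc})=(1+|\rho\circ\pi|^{2})|\pi^{\ast}\omega|^{2}$ on $\widetilde{\Sigma}$; moreover $\widetilde{\Sigma}$ is an open Riemann surface (a holomorphic Legendrian immersion of a compact surface into $SL(2,\C)$ does not exist, so the compact case is vacuous). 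Thus $\rho$ becomes a nonconstant meromorphic function on an open Riemann surface carrying a complete conformal metric of the form (\ref{equ-metric}) with $m=1$, all of whose ${\alpha}_{j}$-points have multiplicity $\geq{\nu}_{j}$, with $\gamma=\sum_{j=1}^{q}(1-1/{\nu}_{j})>3=m+2$ by (\ref{equ-4-imramification}).

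Next I would simply invoke Corollary \ref{cor-ramification} with $m=1$: its hypothesis (\ref{equ-ramification}) is precisely the inequality (\ref{equ-4-imramification}), so the corollary forces $\rho$ to be constant, contradicting our reduction. To pass from ``$\rho$ constant'' to the stated geometric dichotomy I would quote the classification of flat fronts in ${\H}^{3}$ with constant ratio of canonical forms (see \cite{KUY2004}, \cite{KRSUY2005}): when $\rho\equiv 0$ or $\rho\equiv\infty$, one of $\omega$, $\theta$ vanishes identically and $f(\Sigma)$ is a horosphere, and for any other constant value $f(\Sigma)$ is a hyperbolic cylinder. Finally, for the ``in particular'' statement I would argue by contradiction: if a nonconstant $\rho$ omitted $4$ distinct values, setting ${\nu}_{j}=\infty$ for those values gives $\gamma=4>3$, and the first part of the corollary then forces $\rho$ constant --- a contradiction --- so $\rho$ can omit at most $3$ values.

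Since the analytic heart of the statement is already contained in Corollary \ref{cor-ramification} (itself a consequence of Theorem \ref{thm-ramification}), there is no serious obstacle; the only points that need care are the bookkeeping in the covering-space reduction --- in particular, making sure it is the auxiliary metric $ds^{2}_{\Lc}$, rather than the possibly degenerate singular metric of the flat front, whose completeness is invoked --- and the correct citation of the horosphere/hyperbolic-cylinder dichotomy for the constant-$\rho$ case.
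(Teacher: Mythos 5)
Your proposal is correct and is essentially the paper's intended derivation: the paper obtains this corollary from Theorem \ref{thm-4-flat1} (equivalently Corollary \ref{cor-ramification} with $m=1$ applied to $ds^{2}_{\Lc}=(1+|\rho|^{2})|\omega|^{2}$), passing to the universal covering when $\Sigma$ is not simply connected, exactly as you do. Your extra care about lifting completeness, the constant-$\rho$ classification, and the $\nu_{j}=\infty$ trick for the omitted-values statement matches the paper's (largely implicit) reasoning.
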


As an application of Corollary \ref{thm-4-flat2}, we can obtain a simple proof of the classification (\cite{Sa1973}, \cite{VV1971}) of 
complete nonsingular flat surfaces in ${\H}^{3}$. For the details, see \cite[Corollary 4.5]{Ka2013-2}. 

As another application of Corollary \ref{thm-4-flat2}, we get the following analogue to the Ahlfors islands theorem. 

\begin{theorem}[{\cite[Corollary 4.2]{Ka2013-2}}]\label{thm-4-flat3}
Let $f\colon \Sigma \to {\H}^{3}$ be a weakly complete flat front. 
Let $q\in \N$, ${\alpha}_{1}, \ldots, {\alpha}_{q}\in \C\cup \{\infty \}$ be distinct, 
$D_{j}({\alpha}_{j}, \varepsilon):=\{z\in \C\cup \{\infty \} \,;\, |z, {\alpha}_{j}|< \varepsilon \}$ $(1\leq j\leq q)$ be pairwise disjoint and 
${\nu}_{1}, \ldots, {\nu}_{q}\in \N$. Suppose that the inequality (\ref{equ-4-imramification}) holds. 
Then there exists $\varepsilon > 0$ such that, if the ratio of canonical forms $\rho$ has no island of multiplicity less than ${\nu}_{j}$ over 
$D_{j}({\alpha}_{j}, \varepsilon)$ 
for all $j\in \{1, \ldots , q\}$, then $\rho$ must be constant, that is, $f(\Sigma)$ is a horosphere or a hyperbolic cylinder. 
\end{theorem}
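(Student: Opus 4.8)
The plan is to argue by contradiction and to reduce the statement to the ramification theorem for flat fronts, Corollary \ref{thm-4-flat2}, in exact parallel with the way Corollary \ref{cor-covering} is deduced from Corollary \ref{cor-ramification}. Indeed, by the identity $ds^{2}_{\Lc}=|\omega|^{2}+|\theta|^{2}=(1+|\rho|^{2})|\omega|^{2}$ recorded before the statement, the metric $ds^{2}_{\Lc}$ is of the form (\ref{equ-metric}) with $m=1$, and a flat front is weakly complete precisely when $ds^{2}_{\Lc}$ is complete; if $\Sigma$ is not simply connected one first passes to the universal covering $\widetilde{\Sigma}$, on which $\omega$, $\theta$, $\rho$ and $ds^{2}_{\Lc}$ are single-valued and completeness is preserved. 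So Theorem \ref{thm-4-flat3} is essentially the geometric incarnation of Corollary \ref{cor-covering} for the value $m=1$, and it suffices to check that the geometric data of a weakly complete flat front fit the hypotheses of that corollary and of Corollary \ref{thm-4-flat2}.

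Concretely, I would first suppose that no $\varepsilon>0$ with the asserted property exists, so that for every $\varepsilon>0$ one can choose a weakly complete flat front $f$ whose ratio of canonical forms $\rho$ is nonconstant and has no island of multiplicity less than $\nu_{j}$ over $D_{j}(\alpha_{j},\varepsilon)$ for all $j\in\{1,\dots,q\}$. The core step is the local dictionary between ramification and islands: if $p$ were an $\alpha_{j}$-point of $\rho$ of multiplicity $k<\nu_{j}$, then in a holomorphic chart on $\C\cup\{\infty\}$ centered at $\alpha_{j}$ the function $\rho-\alpha_{j}$ vanishes to order exactly $k$ at $p$, and for $\varepsilon$ small the connected component $U$ of $\rho^{-1}(D_{j}(\alpha_{j},\varepsilon))$ containing $p$ is a relatively compact Jordan domain on which $\rho|_{U}\colon U\to D_{j}(\alpha_{j},\varepsilon)$ is proper of degree $k$, that is, an island of multiplicity $k<\nu_{j}$, contradicting the choice of $f$. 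Hence all $\alpha_{j}$-points of $\rho$ have multiplicity at least $\nu_{j}$; and since $\rho$ is nonconstant, $ds^{2}_{\Lc}$ is complete, and (\ref{equ-4-imramification}) holds, Corollary \ref{thm-4-flat2} (equivalently Theorem \ref{thm-ramification} applied with $m=1$) forces $\rho$ to be constant, a contradiction. Therefore the required $\varepsilon$ exists, and once $\rho$ is constant the classification quoted in Corollary \ref{thm-4-flat2} identifies $f(\Sigma)$ as a horosphere or a hyperbolic cylinder.

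The step I expect to be the main obstacle is making the implication ``$\alpha_{j}$-point of multiplicity $<\nu_{j}\Rightarrow$ island over $D_{j}(\alpha_{j},\varepsilon)$ of multiplicity $<\nu_{j}$'' legitimate for the particular radius $\varepsilon$ attached to the chosen front, since a priori the threshold below which the component $U$ above is a relatively compact Jordan domain depends on the front and not merely on $\alpha_{1},\dots,\alpha_{q}$. I would handle this exactly as in the proof of Corollary \ref{cor-covering}: the contradiction is extracted from the whole family of fronts furnished by the negation, which supplies one such front for every $\varepsilon>0$, including arbitrarily small ones, so it is enough that for a given front and a given ramification point the island appears for all sufficiently small radii. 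Apart from this bookkeeping the argument is a routine transcription of the proof of Corollary \ref{cor-covering}; the only surface-geometric input specific to flat fronts is the identity $ds^{2}_{\Lc}=(1+|\rho|^{2})|\omega|^{2}$ and the meaning of weak completeness, both recalled before the statement.
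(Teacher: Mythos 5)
Your argument is correct and is essentially the paper's own (implicit) proof: the paper obtains Theorem \ref{thm-4-flat3} exactly by running the proof of Corollary \ref{cor-covering} with the metric $ds^{2}_{\Lc}=(1+|\rho|^{2})|\omega|^{2}$ (the case $m=1$), i.e.\ by the island-versus-ramification dictionary combined with Corollary \ref{thm-4-flat2}, passing to the universal cover when $\Sigma$ is not simply connected. Your closing worry about the radius $\varepsilon$ is handled no more precisely in the paper itself; under the natural reading in which $\varepsilon$ may depend on the given front, your core step (a ramification point of multiplicity $k<\nu_{j}$ yields, for all sufficiently small $\varepsilon$, an island of multiplicity $k$ over $D_{j}({\alpha}_{j},\varepsilon)$) already suffices.
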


The important case of Theorem \ref{thm-4-flat3} is the case where $q=7\,(=2\cdot 1+5)$ and ${\nu}_{j}=2$ for each $j$ 
$(j=1,\ldots, q)$. 

\begin{corollary}[{\cite[Corollary 4.3]{Ka2013-2}}]\label{thm-4-flat4}
Let $f\colon \Sigma \to {\H}^{3}$ be a weakly complete flat front. 
Let ${\alpha}_{1}, \ldots, {\alpha}_{7} \in \C\cup \{\infty \}$ be distinct and 
$D_{j}({\alpha}_{j}, \varepsilon):=\{z\in \C\cup \{\infty \} \,;\, |z, {\alpha}_{j}|< \varepsilon \}$ $(1\leq j\leq 7)$. 
Then there exists $\varepsilon > 0$ such that, if the ratio of canonical forms $\rho$ has no simple island of over any of the small disks 
$D_{j}({\alpha}_{j}, \varepsilon)$ for all $j\in \{1, \ldots , 7 \}$, then $\nu$ must be constant, that is, $f(\Sigma)$ is a horosphere or a hyperbolic 
cylinder. 
\end{corollary}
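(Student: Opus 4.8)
The plan is to read Corollary \ref{thm-4-flat4} off from Theorem \ref{thm-4-flat3} by taking $q=7$ and $\nu_1=\cdots=\nu_7=2$, exactly as indicated in the sentence preceding the statement. The first step is to check that this choice satisfies the standing hypothesis (\ref{equ-4-imramification}): since $q=2\cdot 1+5=7$,
\[
\gamma=\sum_{j=1}^{7}\Bigl(1-\frac{1}{2}\Bigr)=\frac{7}{2}>3=1+2,
\]
so Theorem \ref{thm-4-flat3} is applicable. That theorem requires the disks $D_j(\alpha_j,\varepsilon)$ to be pairwise disjoint, but this costs nothing: as $\alpha_1,\dots,\alpha_7$ are distinct, $L:=\min_{i<j}|\alpha_i,\alpha_j|>0$, and any $\varepsilon<L/2$ forces disjointness; hence we simply take the $\varepsilon$ provided by Theorem \ref{thm-4-flat3} and shrink it below $L/2$ if necessary.

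The second step is to match the hypotheses verbally. By Definition \ref{def-island} the multiplicity of an island is the degree of a proper map, hence a positive integer, so ``island of multiplicity less than $\nu_j=2$'' is synonymous with ``simple island''. Thus the assumption of Theorem \ref{thm-4-flat3} for our data --- that $\rho$ has no island of multiplicity less than $\nu_j$ over $D_j(\alpha_j,\varepsilon)$ for every $j$ --- is word for word the assumption of the present corollary, and its conclusion, that $\rho$ is constant and therefore $f(\Sigma)$ is a horosphere or a hyperbolic cylinder, is exactly what we must prove. (There is a typo in the displayed statement: the final clause should read ``$\rho$ must be constant'', not ``$\nu$''.)

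I do not anticipate a real obstacle: the argument reduces to the numerical verification $7/2>3$ together with the bookkeeping that ``no simple island'' means ``no island of multiplicity less than two''. The only conceptual point worth flagging is why one routes this through Theorem \ref{thm-4-flat3} rather than reproving it directly: passing from ``no small simple island over $D_j(\alpha_j,\varepsilon)$'' to ``all $\alpha_j$-points of $\rho$ have multiplicity at least $2$'', and thence to Corollary \ref{thm-4-flat2}, requires the contradiction argument used in the proof of Corollary \ref{cor-covering}, which ultimately rests on the fact that the constant $C$ in the curvature bound of Theorem \ref{thm-ramification}, applied to $ds^{2}_{\Lc}=(1+|\rho|^{2})|\omega|^{2}$ with $m=1$, depends only on $\gamma$ and $\alpha_1,\dots,\alpha_7$ and not on the surface. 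If one wished to write the proof self-containedly that uniformity would be the single nontrivial ingredient; quoting Theorem \ref{thm-4-flat3} bypasses it entirely, so the proof here is a two-line specialization.
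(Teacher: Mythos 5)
Your proposal is correct and matches the paper's (implicit) argument: the paper offers no separate proof for this corollary precisely because it is the immediate specialization of Theorem \ref{thm-4-flat3} to $q=7=2\cdot 1+5$ and ${\nu}_{j}=2$, with $\gamma = 7/2 > 3$, exactly as you wrote. Your additional remarks (shrinking $\varepsilon$ to ensure the disks are disjoint, equating ``no simple island'' with ``no island of multiplicity less than $2$'', and the $\nu$-versus-$\rho$ typo) are all accurate but do not constitute a different route.
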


Finally, by applying Theorem \ref{thm-unicity}, we provide the following unicity theorem for the ratios of canonical forms of weakly complete 
flat fronts in ${\H}^{3}$. 

\begin{theorem}\label{thm-4-flat5}
Let $f\colon \Sigma \to {\H}^{3}$ and $\widehat{f}\colon \widehat{\Sigma}\to {\R}^{3}$ be two flat fronts on simply connected Riemann surfaces and 
assume that there exists a conformal diffeomorphism $\Psi\colon \Sigma \to \widehat{\Sigma}$. Let $\rho\colon \Sigma \to \C\cup\{\infty \}$ and 
$\hat{\rho}\colon \widehat{\Sigma}\to \C\cup\{\infty \}$ be the ratio of canonical forms $f(\Sigma)$ and $\widehat{f}(\widehat{\Sigma})$, respectively. 
Suppose that there exist $q$ distinct points ${\alpha}_{1}, \ldots, {\alpha}_{q}\in \C\cup \{\infty \}$ 
such that ${\rho}^{-1}({\alpha}_{j})=(\hat{\rho}\circ \Psi)^{-1}({\alpha}_{j})$ $(1\leq j\leq q)$. 
If $q \geq 6 \,(=(1+4)+1)$ and either $f (\Sigma)$ or $\widehat{f} (\widehat{\Sigma})$ is weakly complete, then either 
$\rho\equiv \hat{\rho}\circ \Psi$ or $\rho$ and $\hat{\rho}$ are both constant. 
\end{theorem}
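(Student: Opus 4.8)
The plan is to deduce the statement from Theorem \ref{thm-unicity} applied with $m=1$, supplemented by a short separate argument for the case in which one of the two functions is constant. First I would recall, from the representation of flat fronts above, that on the simply connected surface $\Sigma$ the metric $ds^{2}_{\Lc}=|\omega|^{2}+|\theta|^{2}=(1+|\rho|^{2})|\omega|^{2}$ is precisely a conformal metric of the form (\ref{equ-metric1}) with $m=1$ and $g=\rho$, and similarly $d\hat{s}^{2}_{\widehat{\Lc}}=(1+|\hat{\rho}|^{2})|\hat{\omega}|^{2}$ is of the form (\ref{equ-metric2}) with $m=1$; moreover weak completeness of $f$ (resp.\ $\widehat{f}$) means exactly that $ds^{2}_{\Lc}$ (resp.\ $d\hat{s}^{2}_{\widehat{\Lc}}$) is complete. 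For brevity write $\hat{\rho}$ also for $\hat{\rho}\circ\Psi$, which is single-valued meromorphic on $\Sigma$ since $\Psi$ is a conformal diffeomorphism.

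If both $\rho$ and $\hat{\rho}\circ\Psi$ are nonconstant, then, since $q\geq 6=(m+4)+1$ with $m=1$ and one of the two metrics is complete, Theorem \ref{thm-unicity} applies directly and gives $\rho\equiv\hat{\rho}\circ\Psi$, which is the first alternative. It remains to treat the case where one of the two functions, say $\rho$, is constant, the case of $\hat{\rho}\circ\Psi$ constant being symmetric after interchanging the roles of the two fronts. So suppose $\rho\equiv c$ for some $c\in\C\cup\{\infty\}$. Since the $\alpha_{j}$ are distinct, at most one of them equals $c$, and hence for at least $q-1\geq 5$ indices $j$ we have $(\hat{\rho}\circ\Psi)^{-1}(\alpha_{j})=\rho^{-1}(\alpha_{j})=\emptyset$; thus $\hat{\rho}\circ\Psi$ omits at least five distinct values.

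Now I would split according to which of the two metrics is assumed complete. If $d\hat{s}^{2}_{\widehat{\Lc}}$ is complete, then by Corollary \ref{cor-exceptional} with $m=1$ a nonconstant $\hat{\rho}$ can omit at most $m+2=3$ values, so $\hat{\rho}$, which omits at least five, must be constant. If instead $ds^{2}_{\Lc}$ is complete, then because $\rho\equiv c$ the immersion condition on the holomorphic lift $\Lc$ forces the relevant holomorphic $1$-form in $ds^{2}_{\Lc}=|\omega|^{2}+|\theta|^{2}$ (namely $\omega$ if $c\neq\infty$, or $\theta$ if $c=\infty$) to be zero-free, so $ds^{2}_{\Lc}$ is a complete flat metric; consequently $\Sigma$ is parabolic, and lifting to a holomorphic universal covering $\pi\colon\C\to\Sigma$ as in Remark \ref{rem-geometry}, the function $\hat{\rho}\circ\Psi\circ\pi$ is meromorphic on $\C$ and omits at least five values, hence is constant by the little Picard theorem, so $\hat{\rho}\circ\Psi$ is constant. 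In either subcase $\rho$ and $\hat{\rho}\circ\Psi$ are both constant, which is the second alternative; together with the previous paragraph this exhausts all cases, so the theorem follows.

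I expect the only genuine subtlety to lie in the constant case, specifically in making the argument uniform over the two completeness hypotheses: the subcase in which $ds^{2}_{\Lc}$ is complete needs the parabolicity plus Picard argument rather than a direct appeal to Corollary \ref{cor-exceptional}, because $\widehat{f}$ need not be weakly complete there, and the degenerate value $c=\infty$ is disposed of routinely by using the canonical form $\theta$ in place of $\omega$. Everything else is a verbatim application of Theorem \ref{thm-unicity}.
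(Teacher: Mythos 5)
Your proof is correct and follows the paper's (implicit) route: the theorem is obtained by applying Theorem \ref{thm-unicity} with $m=1$, $g=\rho$ and the metric $ds^{2}_{\Lc}=(1+|\rho|^{2})|\omega|^{2}$, weak completeness being exactly completeness of this metric. Your separate treatment of the case where one of $\rho$, $\hat{\rho}\circ\Psi$ is constant --- via Corollary \ref{cor-exceptional} when $d\hat{s}^{2}$ is the complete one, and via zero-freeness of the surviving canonical form, flatness/parabolicity and Picard when $ds^{2}_{\Lc}$ is the complete one --- correctly supplies a point the paper leaves implicit in the alternative that both maps are constant.
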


%%%%%%%%%%%% References %%%%%%%%%%%%%
%%
%<Author name> is written as Initial of Given Name, and Family Name.
%<Title> is written in roman letters.
%<Journal name> should be abbreviated according to
% the MR Serials Abbreviations List of Mathematical Reviews:
% (Abbreviations of Names of Serials; http://www.ams.org/mr-database)
%For <Pages>, use en-dash "--" between page numbers.
%%

\end{document}